\numberwithin{equation}{section}
\newtheorem{theorem}{Theorem}[section]
\newtheorem{lemma}[theorem]{Lemma}
\newtheorem{proposition}[theorem]{Proposition}
\newtheorem{corollary}[theorem]{Corollary}
\def\cent#1#2{{\bf C}_{#1}(#2)}
\def\nor#1#2{{\bf N}_{#1}(#2)}
\def\Z#1{{\bf Z}(#1)}
\newcommand{\Aut}{\mathop{\mathrm{Aut}}}
\newcommand{\Sym}{\mathop{\mathrm{Sym}}}
\newcommand{\Alt}{\mathop{\mathrm{Alt}}}
\newcommand{\PSL}{\mathop{\mathrm{PSL}}}
\newcommand{\PGL}{\mathop{\mathrm{PGL}}}
\newcommand{\GL}{\mathop{\mathrm{GL}}}
\newcommand{\Inndiag}{\mathop{\mathrm{Inndiag}}}
\newcommand{\PSU}{\mathop{\mathrm{PSU}}}
\newcommand{\PGU}{\mathop{\mathrm{PGU}}}
\newcommand{\GU}{\mathop{\mathrm{GU}}}
\newcommand{\GF}{\mathop{\mathrm{GF}}}
\newcommand{\GammaO}{\mathop{\Gamma\mathrm{O}}}
\newcommand{\SU}{\mathop{\mathrm{SU}}}
\newcommand{\PSp}{\mathop{\mathrm{PSp}}}
\newcommand{\SO}{\mathop{\mathrm{SO}}}
\newcommand{\GO}{\mathop{\mathrm{GO}}}
\newcommand{\Sp}{\mathop{\mathrm{Sp}}}
\newcommand{\SL}{\mathop{\mathrm{SL}}}
\newcommand{\PGammaL}{\mathop{\mathrm{P}\Gamma\mathrm{L}}}
\newcommand{\POmega}{\mathop{\mathrm{P}\Omega}}
\definecolor{darkblue}{rgb}{0,0,0.8}
\subjclass[2010]{05A17, 11P81}
\keywords{Szep's conjecture, almost simple, group factorization}
\begin{document}

\title[Szep's Conjecture]{A generalization of Szep's conjecture for almost simple groups}

\author{Nick Gill}
\address{ Department of Mathematics, The Open University, Milton Keynes MK7 6AA, U.K.}
\email{nick.gill@open.ac.uk}

\author{Michael Giudici}
\address{Michael Giudici, Centre for Mathematics of Symmetry and Computation,\newline
School of Mathematics and Statistics,
The University of Western Australia,
 Crawley, WA 6009, Australia}
\email{michael.giudici@uwa.edu.au}
\author{Pablo Spiga}
\address{Dipartimento di Matematica e Applicazioni, University of Milano-Bicocca, Via Cozzi 55, 20125 Milano, Italy} 
\email{pablo.spiga@unimib.it}

\begin{abstract}
We prove a natural generalization of Szep's conjecture. Given an almost simple group $G$ with socle not isomorphic to an orthogonal group having Witt defect zero, we classify all possible group elements $x,y\in G\setminus\{1\}$ with $G=\nor G {\langle x\rangle}\nor G{\langle y\rangle}$, where we are denoting by $\nor G{\langle x\rangle}$ and by $\nor G{\langle y\rangle}$ the normalizers of the cyclic subgroups $\langle x\rangle$ and $\langle y\rangle$. As a consequence of this result, we classify all possible group elements $x,y\in G\setminus\{1\}$ with $G=\cent G x\cent G y$.

\begin{center}
\textit{Dedicated to Pham Huu Tiep on the occasion of his $60^{\textrm{th}}$ birthday.}
\end{center}
\end{abstract}
\maketitle
\section{introduction}\label{intro}

Given a finite group $G$ and $x\in G$, we denote by $\cent G x$ the \emph{\textbf{centralizer}} of $x$ in $G$ and by $\nor G {\langle x\rangle}$ the \emph{\textbf{normalizer}} of the cyclic subgroup $\langle x\rangle$ in $G$. It was conjectured by J.~Szep~\cite{FA}, that if $G=\cent G x\cent G y$ with $x,y\in G\setminus\{1\}$, then $G$ is not a non-abelian simple group. Over a long period, many authors investigated this conjecture and in $1987$, using the Classification of the Finite Simple Groups, E.~Fisman and Z.~Arad~\cite[Theorem~1]{FA} gave a positive answer to this problem.

More recently, R.~Guralnick, G.~Malle and P.~Tiep have obtained another proof of Szep's conjecture~\cite{GMT} as a direct application of some results on the product of conjugacy classes in algebraic groups. This new proof, for Lie type groups, actually proves more than the original statement of Szep's conjecture. Namely, it is shown that, if $L$ is a non-abelian simple group of Lie type and $L\unlhd G\leqslant \Inndiag(L)$, then $\cent G x\cent G y \neq  G$, for every $x,y\in G\setminus\{1\}$. Here $\Inndiag(L)$ denotes the group of \textbf{{\em inner-diagonal}} automorphisms of $L$, as defined in~\cite[Chapter~$2$]{GLS}.

Moreover, recently Szep's conjecture has played a crucial role in the investigation of the finite primitive groups having two coprime subdegrees~\cite{nostro}.  Indeed, the positive solution of Szep's conjecture is used in Theorems~$1.5$ and~$1.6$ of~\cite{nostro}. In order to simplify some of the arguments in the proofs of these theorems, it would have been necessary to have Szep's conjecture available for the whole class of the finite almost simple groups.

The following is the main theorem of this paper.

\begin{theorem}\label{CASE 1}Let $G$ be an almost simple group and let $x,y$ be in $G\setminus\{1\}$. Suppose that the socle of $G$ is not isomorphic to an orthogonal group $\mathrm{P}\Omega_{n}^+(q)$, with $n\geqslant 8$. If $G=\nor G {\langle x\rangle}\nor G {\langle y\rangle}$, then (replacing $x$ by $y$ if necessary) $(G,x,y)$ is one of the triples in Table~$\ref{table1}$. See Sections~$\ref{notation}$~and~$\ref{notationtables}$, for the notation in Table~$\ref{table1}$.

Moreover, $G={\bf C}_G(x){\bf C}_G(y)$ if and only if in the $6^{\mathrm{th}}$ column of Table~$\ref{table1}$ appears the symbol $\surd$.
\end{theorem}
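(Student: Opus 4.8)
The plan is to reduce the classification of the factorizations $G=\nor G{\langle x\rangle}\nor G{\langle y\rangle}$ to a problem about \emph{maximal factorizations} of almost simple groups, for which a complete list is available in the literature (Liebeck--Praeger--Saxl). The key observation is that $\nor G{\langle x\rangle}$ and $\nor G{\langle y\rangle}$ are proper subgroups of $G$ whenever $x,y\neq 1$ and $G$ is almost simple (since an almost simple group has trivial centre and no nontrivial normal cyclic subgroup). Hence each is contained in some maximal subgroup, and the factorization $G=\nor G{\langle x\rangle}\nor G{\langle y\rangle}$ forces a maximal factorization $G=AB$ with $\nor G{\langle x\rangle}\leqslant A$ and $\nor G{\langle y\rangle}\leqslant B$. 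The first major step is therefore to run through the list of maximal factorizations $G=AB$ of almost simple groups (organized by socle type: alternating, sporadic, and the various families of Lie type) and, for each, to determine precisely which group elements $x$ can have $\nor G{\langle x\rangle}\leqslant A$ and which $y$ can have $\nor G{\langle y\rangle}\leqslant B$.

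Once a candidate pair of maximal subgroups $(A,B)$ and candidate elements $(x,y)$ is identified, the second major step is to \emph{verify} the exact factorization $G=\nor G{\langle x\rangle}\nor G{\langle y\rangle}$, not merely the containment in $G=AB$. This is an order computation combined with an intersection analysis: one checks whether $|G|\cdot|\nor G{\langle x\rangle}\cap\nor G{\langle y\rangle}|=|\nor G{\langle x\rangle}|\cdot|\nor G{\langle y\rangle}|$, i.e. whether the normalizer product genuinely covers all of $G$. For the finitely many sporadic and small exceptional cases, I would carry this out directly (possibly with computer algebra), while for the generic families of classical and exceptional Lie type I would use the known orders and subgroup structure to resolve it uniformly. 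The outcome of these two steps produces the list of triples $(G,x,y)$ recorded in Table~\ref{table1}.

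The final clause---that $G={\bf C}_G(x){\bf C}_G(y)$ precisely when the $6^{\mathrm{th}}$ column carries a $\surd$---is then handled by examining each surviving triple individually. Since $\cent G x\leqslant\nor G{\langle x\rangle}$ always, a centralizer factorization is strictly stronger than a normalizer factorization, so the $\surd$ entries form a subset of the triples already found. For each triple in the table I would compute the index $[\nor G{\langle x\rangle}:\cent G x]=|x^{G}\cap\langle x\rangle\text{-action}|$, which equals the order of the image of $\nor G{\langle x\rangle}$ in $\Aut(\langle x\rangle)$, and likewise for $y$; the question of whether $\cent G x\cent G y=G$ then reduces again to an order-and-intersection check with these smaller subgroups replacing the normalizers. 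Whenever the indices are nontrivial the covering can fail, and the decision is recorded as a $\surd$ or its absence.

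I expect the main obstacle to be the Lie type families of large rank, where the subgroup and element-order bookkeeping inside the maximal factorizations is most delicate: one must correctly identify, for a given element $x$, the precise maximal overgroups of $\nor G{\langle x\rangle}$, track the interaction with field, diagonal, and graph automorphisms in the almost simple extensions, and handle the numerous exceptional maximal factorizations that do not fit the generic pattern. The orthogonal groups $\POmega_n^+(q)$ of plus type are excluded from the theorem precisely because their factorization behaviour (with the extra maximal subgroups arising from the triality and the two classes of reducible subgroups) makes this bookkeeping intractable by the present methods; the stated restriction to socles other than $\mathrm{P}\Omega_n^+(q)$ with $n\geqslant 8$ is what keeps the case analysis finite and the table complete.
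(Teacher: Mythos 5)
Your overall strategy coincides with the paper's: reduce to the Liebeck--Praeger--Saxl classification of maximal factorizations, then verify each candidate by an order-and-intersection count, and finally decide the centralizer factorization $G=\cent Gx\cent Gy$ triple by triple using $\cent Gx\leqslant\nor G{\langle x\rangle}$. However, as written the plan has two genuine gaps. First, the reduction step is incomplete: \cite{LPS1} classifies the \emph{core-free} maximal factorizations, and it can happen that every maximal subgroup of $G$ containing $\nor G{\langle x\rangle}$ contains the socle $L$, in which case the induced maximal factorization $G=AB$ is not core-free and falls outside Tables 1--6 of \cite{LPS1}. The paper handles this separately via \cite[Table~1]{LPS2}; your argument that "the factorization forces a maximal factorization $G=AB$" silently assumes $A$ and $B$ can be chosen core-free, which needs justification or a separate case.

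Second, and more substantially, you give no mechanism for the step "determine precisely which group elements $x$ can have $\nor G{\langle x\rangle}\leqslant A$," and this is where essentially all of the work lies. The paper's engine is a family of primitive-prime-divisor lemmas: since $G=XY$, a Zsigmondy prime $t$ of $q^n-1$ (or $q^{n-1}-1$, etc.) must divide $|X|$ or $|Y|$; one then shows that a cyclic subgroup $T_1$ of order $t$ normalizing $\langle x\rangle$ must in fact centralize $x$ (because $t$ is too large to permute eigenspaces nontrivially and too large to divide $\varphi(|x|)$ except in controlled exceptional cases), forcing $x\in\cent{\Aut(L)}{T_1}$, which is computed to be a maximal torus possibly extended by a graph or graph-field automorphism. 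Without this (or an equivalent device), the case analysis over the LPS tables cannot be carried out: knowing $\nor G{\langle x\rangle}\leqslant A$ for $A$ in the list does not by itself identify $x$, and the exceptional branches of these lemmas (e.g.\ $n=t_2$ prime with $x$ in a Singer cycle) produce configurations that must be excluded by separate arguments. A further minor point: the exclusion of $\POmega_n^+(q)$ is not because the method fails in principle but because the authors found twelve families of examples there (and, incidentally, a factorization missing from \cite{LPS1} for socle $\POmega_8^+(4)$ and $\POmega_8^+(16)$), so your closing explanation slightly misstates the reason for the hypothesis.
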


In the course of the proof of Theorem~\ref{CASE 1}, we show that every triple $(G,x,y)$ in Table~\ref{table1} gives rise to a genuine example of a factorization $G=\nor G {\langle x\rangle}\nor G{\langle y\rangle}$.

\begin{table}
\begin{adjustbox}{angle=90}
\begin{tabular}{|c|c|c|c|c|c|}\hline
Line&Group&element $x$&element $y$&Remarks&\\\hline
1&$\Sym(n)$& transposition&  $n$-cycle&$n$ prime&\\
2&$\Sym(5)$&$|x|\in \{3,6\}$& $5$-cycle &&\\\hline
3&$\PGL_2(r)$&$|x|=r$ & $y$ has no $1$-dim. eigenspace&&\\
4&$\PSL_2(r)$&$|x|=r$ &$y$ has no $1$-dim. eigenspace& $r\equiv 3\pmod 4$&\\
5&$\PGammaL_2(16)$& field aut. of order $2$& $|y|=17$&&\\

6&$\PSL_n(q)\unlhd G$& graph aut. of order $2$ & $|y|$ divides $q-1$, $y$ has an      & $n\geqslant 4$ even&$\surd$ \\

&                   & $\cent {\mathrm{PGL}_n(q)} x\cong \mathrm{PGSp}_n(q)$ & $(n-1)$-dim. eigenspace                          &$G$ contains a graph aut.    &\\

7&$\PSL_n(4)\unlhd G$&$|x|=5$, $x$ has  no eigenvalue&$|y|=3$, $y$ has an & $n\geqslant 4$ even, $G\nleq \mathrm{PGL}_n(4)\langle\tau\rangle$& \\

&&in $\mathbb{F}_q$&$(n-1)$-dim. eigenspace&$\tau$ inverse-transpose aut.&\\\hline

8&$\PSU_n(4)\unlhd G$&$|x|=3$, $x$ has no &$|y|=5$, $y$ has an $(n-1)$-dim.&$n$ even, &\\
&&eigenvalue in $\mathbb{F}_{q^2}$&eigenspace&$4$ divides $|G:L|$&\\

9&$\PSU_n(q)\unlhd G$& $|x|=2$, $x\in\mathrm{P}\Gamma\mathrm{U}_n(q)\setminus\PGU_n(q)$&  $|y|\mid q+1$, $x$ has an     & $n\geqslant 4$ even&$\surd$ \\
&                   &$\cent L x\cong \PSp_n(q)$   &    $(n-1)$-dim. eigenspace                      &$2$ divides $|G: L|$ &   \\\hline

10&$\mathrm{PSp}_n(q)\unlhd G$& $|x|=2$, $x\in \mathrm{PGSp}_n(q)\setminus \PSp_n(q)$
    & $|y|= r$, $y$ transvection  & $q$ odd, $n/2$ even &$\surd$\\
 
&                   &$\cent {L} x\cong \PSp_{n/2}(q^2).2$  &                             &$\mathrm{PGSp}_n(q)\leqslant G$&\\\hline
11&$\POmega_{n}^-(q)\unlhd G$& graph aut. of order 2& $|y|$ divides $q+1$, $y$ has no & $n/2$ odd&$\surd$\\
  &                        &$\cent Lx\cong \Omega_{n-1}(q)$  if $q$ odd                          & eigenvalue in $\mathbb{F}_{q}$                &$G$ contains a graph aut.&\\
&&$\cent Lx\cong \Sp_{n-2}(q)$ if $q$ even&&&\\
12&$\Aut(\Omega_{n}^-(4))$&$|x|=3$, $x$ has an   &       $|y|=5$, $y$ has no eigenvalue&$5$ divides $n$, $n/2$ odd &\\
  &                       &$(n-2)$-dim. eigenspace& in $\mathbb{F}_{q}$                       &          &\\
13&
$\mathrm{Aut}(\Omega_{n}^-(q))$
&$|x|=2$, $x\in\mathrm{SO}_n^-(q)\setminus\Omega_n^-(q)$&       $|y|=q^2+1$, $y$ has no eigenvalue&$n\equiv 4\pmod 8$ &\\
  &                       &${\bf C}_{\Omega_n^-(q)}(x)\cong\mathrm{Sp}_{n-2}(q)$& in $\mathbb{F}_{q^2}$, ${\bf C}_{\Omega_n^-(q)}(y)\cong\mathrm{GU}_{n/4}(q^2)$                       &   $q\in \{2,4\}$       & \\
  \hline
14&$\POmega_n(q)\unlhd G$&$|x|=2$, $\cent Lx\cong \POmega_{n-1}^-(q).2$ &          $|y|=r$, $y$ unipotent & $n\equiv 1\pmod 4$&$\surd$\\
  &                       &$x\in \SO_n(q)\setminus \Omega_n(q)$& $\cent L y\cong E_q^{m(m-1)/2+m}:\mathrm{Sp}_m(q)$                       &$\SO_n(q)\leqslant G$ &  \\\hline
\end{tabular}
\end{adjustbox}
\caption{Triples in Theorem~\ref{CASE 1}, $L\ne \POmega_n^+(q)$. See Sections~\ref{notation} and~\ref{notationtables} for notation.}\label{table1}
\end{table}

An immediate application of Theorem~\ref{CASE 1} gives the following corollary.

\begin{corollary}\label{cor2}Let $G$ be an almost simple transitive permutation group on $\Omega$ and let $\omega$ be in $\Omega$. Suppose that the socle of $G$ is not isomorphic to an orthogonal group $\mathrm{P}\Omega_{n}^+(q)$, with $n\geqslant 8$.
 If the point stabilizer $G_\omega$ normalizes a non-identity cyclic subgroup $\langle x\rangle$ and if $G$ contains an element $y\neq 1$ with $\nor G {\langle y\rangle}$ transitive on $\Omega$, then (replacing $x$ by $y$ if necessary) the triple $(G,x,y)$ is in Table~$\ref{table1}$.
\end{corollary}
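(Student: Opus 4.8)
The plan is to derive the corollary directly from Theorem~\ref{CASE 1}; the proof is essentially immediate, the only external ingredient being the elementary transitivity criterion for permutation groups.

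First I would identify $\Omega$ with the coset space $G/G_\omega$ as a $G$-set, which is legitimate because $G$ is transitive. Recall the standard fact that a subgroup $H\leqslant G$ is transitive on $\Omega$ precisely when its orbit through $\omega$ is all of $\Omega$, and that the orbit of $\omega$ under $H$ equals $\Omega$ if and only if $HG_\omega=G$. Applying this with $H=\nor G{\langle y\rangle}$, the hypothesis that $\nor G{\langle y\rangle}$ is transitive on $\Omega$ gives
\[
G=\nor G{\langle y\rangle}\,G_\omega .
\]

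Next I would feed in the other hypothesis. Since $G_\omega$ normalizes the non-identity cyclic subgroup $\langle x\rangle$, we have $G_\omega\leqslant\nor G{\langle x\rangle}$, and therefore
\[
G=\nor G{\langle y\rangle}\,G_\omega\leqslant\nor G{\langle y\rangle}\,\nor G{\langle x\rangle}\leqslant G,
\]
forcing $G=\nor G{\langle y\rangle}\,\nor G{\langle x\rangle}$. For subgroups $A,B$ of $G$ one has $G=AB$ if and only if $G=BA$ (invert a factorization $g=ab$ to write $g^{-1}=b^{-1}a^{-1}$), so equivalently $G=\nor G{\langle x\rangle}\,\nor G{\langle y\rangle}$.

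Finally, all hypotheses of Theorem~\ref{CASE 1} are now met: $G$ is almost simple with socle not isomorphic to $\mathrm{P}\Omega_n^+(q)$ for $n\geqslant 8$, the elements $x,y$ are both non-identity, and $G=\nor G{\langle x\rangle}\,\nor G{\langle y\rangle}$. Invoking Theorem~\ref{CASE 1} then yields that, after interchanging $x$ and $y$ if necessary, the triple $(G,x,y)$ lies in Table~\ref{table1}, which is exactly the claim. I do not anticipate any real obstacle in this argument: the whole weight of the corollary rests on Theorem~\ref{CASE 1}, and the reduction above is purely formal.
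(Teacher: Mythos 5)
Your argument is correct and is exactly the reduction the paper intends: the paper gives no separate proof, calling the corollary an ``immediate application'' of Theorem~\ref{CASE 1}, and your two steps (transitivity of $\nor G{\langle y\rangle}$ giving $G=\nor G{\langle y\rangle}G_\omega$, and $G_\omega\leqslant\nor G{\langle x\rangle}$ upgrading this to $G=\nor G{\langle x\rangle}\nor G{\langle y\rangle}$) are precisely the intended formal reduction. No gaps.
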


A similar investigation for almost simple groups having socle an orthogonal group $\mathrm{P}\Omega_{n}^+(q)$, with $n\geqslant 8$, seems difficult and, at the moment, we do have 12 different families of factorizations using normalizers. We intend to come back to this question in the future.

It is worth mentioning that our strategy for proving Theorem~\ref{CASE 1}  is considerably different from the original proof of Szep's conjecture~\cite{FA}. Our main tool uses the classification of the maximal factorizations of the almost simple groups obtained by M.~Liebeck, C.~Praeger and J.~Saxl in~\cite{LPS1,LPS2}.

Let $G$ be an almost simple group with \textit{\textbf{socle}} $L$. A factorization $G=AB$ is said to be \emph{\textbf{maximal}} if $A$ and $B$ are both maximal subgroups of $G$, and is said to be \emph{\textbf{core-free}} if $A$ and $B$ are core-free in $G$ (that is, $L\nleq A,B$).
All the core-free maximal factorizations of the almost simple groups are classified in Tables~1--6 and Theorem~D of~\cite{LPS1}. In particular, if $G=\nor G {\langle x\rangle}\nor G {\langle y\rangle}$ (for some $x,y\in G\setminus\{1\}$) and $\nor G {\langle x\rangle}\leqslant A$, $\nor G {\langle y\rangle}\leqslant B$ for some core-free maximal subgroups $A$ and $B$ of $G$, then $(G,A,B)$ is one of the triples classified in~\cite{LPS1}. In particular, this reduces the proof of Theorem~\ref{CASE 1} to a case-by-case analysis on Tables~$1$--$6$ and on Theorem~D of~\cite{LPS1}. Moreover, for each of these triples $(G,A,B)$, we have $\nor G {\langle x\rangle }= \nor A {\langle x\rangle }$ and $\nor G {\langle y\rangle }=\nor B {\langle y\rangle }$ and so it suffices to investigate the order and the structure of the normalizers of the non-trivial elements of $A$ and $B$, respectively.

There is only one more case to consider in our analysis: every maximal subgroup of $G$ containing $\nor G {\langle x\rangle}$ (or $\nor G {\langle y\rangle}$) contains the socle $L$ of $G$. The almost simple groups admitting such factorizations are classified in~\cite[Table~$1$]{LPS2} and there is only a handful of such examples.

In the process of proving Theorem~\ref{CASE 1} using the work in~\cite{LPS1,LPS2}, we have realized  that there is one configuration omitted in the proof of Liebeck, Praeger and Saxl~\cite{LPS1,LPS2} classifying the maximal factorizations of the almost simple groups with socle $\mathrm{P}\Omega_8^+(2^f)$. Although this missing configuration is of no concern to us here because we are excluding almost simple groups having socle $\mathrm{P}\Omega_8^+(2^f)$ in our main results, we discuss this configuration in Section~\ref{sec:omission} and we show that this configuration does give rise to two extra maximal factorizations omitted in the work of Liebeck, Praeger and Saxl. In Section~\ref{sec:omission}, we comment how these extra factorizations influence other work relying on the classification in~\cite{LPS1,LPS2}.
The factorizations in \cite{LPS1} have been extensively used. For instance, recently,  this was used in \cite{LX} to give a characterization of the factorizations of almost simple groups with a solvable factor, which was then applied to study s-arc-transitive Cayley graphs of solvable groups, leading to a striking corollary that, except for cycles, a non-bipartite connected 3-arc-transitive Cayley graph of a finite solvable group is necessarily a normal cover of the Petersen graph or the Hoffman-Singleton graph. However, Zhou \cite{Zhou} improved this and obtained a remarkable refinement, that is, every non-bipartite connected Cayley graph of a finite solvable group is at most 2-arc-transitive.

\subsection{Notation}\label{notation}We use the notation from~\cite[Chapter~4]{GLS} and~\cite{BGbook} for conjugacy classes of elements in groups of Lie type and, in general, we use the notation from~\cite{KL} for the subgroups of the classical groups.

Given an almost simple group $G$, we denote by $L$ the socle of $G$. Suppose that $L$ a simple classical group defined over the finite field of size $q$.
For twisted groups our notation for $q$ is such that $\PSU_n(q)$ and $\POmega_{n}^-(q)$ are the twisted groups contained in $\PSL_n(q^2)$ and $\POmega_{n}^+(q^2)$, respectively.
We write $q=r^f$, for some prime $r$ and some $f\geqslant 1$, and we define
\begin{equation*}
q_0:= \begin{cases}
q^{2} & \mbox{if $G$ is unitary,}\\
q & \mbox{otherwise.}
\end{cases}
\end{equation*}
We let $V$ be the natural module defined over the field $\mathbb{F}_{q_0}$ of size $q_0$ for the covering group of $L$, and we let $n$ be the dimension of $V$ over $\mathbb{F}_{q_0}$.

We consider the
following \textbf{\em classical groups} $\tilde{L}$:
\begin{itemize}
\item$\mathrm{SL}_n(q)$ with $n\geqslant 1$,
\item$\mathrm{SU}_n(q)$  with $n\geqslant 1$,
\item$\mathrm{Sp}_{n}(q)$  with $n$ even and $n\geqslant 2$,
\item$\Omega_{n}(q)$  with $qn$ odd and $n\geqslant 1$, and
\item$\Omega_{n}^{\pm}(q)$ with $n$ even and $n\geqslant 2$.
\end{itemize}

For some of our proofs, we need to deal with arbitrary classical groups as defined above and hence with no restrictions on $n$. However, for proving our main results, we take into account the various isomorphisms among classical groups, see~\cite[Section~2.9]{KL}. For instance, $\mathrm{SL}_2(q)\cong\mathrm{SU}_2(q)\cong \mathrm{Sp}_2(q)\cong \Omega_3(q)$ and $\Omega_5(q)\cong \mathrm{Sp}_4(q)$. In particular, in Table~\ref{table1} and in proving Theorem~\ref{CASE 1}, we may suppose $n\geqslant 2$ for linear groups,  $n\geqslant 3$ for unitary groups, $n\geqslant 4$ for symplectic groups, $n\geqslant 7$ for odd dimensional orthogonal groups and $n\geqslant 8$ for even dimensional orthogonal groups.

The corresponding
\textbf{\em simple classical groups} $L:=\tilde L/ {\bf Z} (\tilde L)$ are
$$
\mathrm{PSL}_n(q),\,
\mathrm{PSU}_n(q),\,
\mathrm{PSp}_{n}(q),\,
\mathrm{P}\Omega_{n}(q),\hbox{ and }
\mathrm{P}\Omega_{n}^{\pm}(q).$$
With the restrictions on $n$ as above, these are indeed non-abelian simple groups, except for $\mathrm{PSL}_2(2)$, $\mathrm{PSL}_2(3)$, $\mathrm{PSU}_3(2)$ and $\mathrm{PSp}_4(2)$.

We denote by $\pi:\tilde L\to L$ the natural projection of $\tilde L$ onto $L$. By abuse of notation,  we refer to the action of $\tilde{L}$ on $V$ simply as the action $L$ on $V$. We adopt a similar convention for every $G$ with $L\unlhd G\leqslant \Aut(L)\cap \PGL(V)$. For example, for a subgroup $H$ of $L$, we say that $H$ acts irreducibly on $V$ when this is true of $\pi^{-1}(H)$.

Given an integer $\kappa$ and a prime number $p$, we write $\kappa_p$ for the \emph{\textbf{largest power}} of $p$ dividing $\kappa$. Given two integers $\kappa$ and $\kappa'$, we denote by $\gcd(\kappa,\kappa')$ the \emph{\textbf{greatest common divisor}} of $\kappa$ and $\kappa'$.

Given  a prime power $q$ and an integer $n\geqslant 2$, a prime $t$ is called a \textit{\textbf{primitive prime divisor}} of $q^n-1$ if $t$ divides $q^n-1$ and $t$  does not divide $q^i-1$, for each $i\in \{1,\ldots,t-1\}$.
From a celebrated theorem of Zsigmondy~\cite{Zs}, the following hold
\begin{itemize}
 \item for $n\geqslant 3$,  primitive prime divisors exist with the only exception of $(n,q)=(6,2)$,
\item for $n=2$, primitive prime divisors exist with the only exception of $q$ being a Mersenne prime, that is, $q$ is prime and $q=2^\ell-1$ for some $\ell\in\mathbb{N}$.
\end{itemize}
Note that, if $t$ is a primitive prime divisor of $q^n-1$, then $q$
has order $n$ modulo $t$ and thus $n$ divides $t-1.$ 

\subsection{Notation for Table~$\ref{table1}$.}\label{notationtables}
In reading Table~$\ref{table1}$, we take into account the notation we have established in Section~\ref{notation} and some isomorphisms among classical groups. 

When $L=\mathrm{PSL}_n(q)$, we suppose $n\geqslant 2$ and, when $$(n,q)\in \{(2,4),(2,5),(2,9),(4,2)\},$$ we refer to Lines~1 and~2 of Table~\ref{table1}, because $\mathrm{PSL}_2(4)\cong \mathrm{PSL}_2(5)\cong\mathrm{Alt}(5)$, $\mathrm{PSL}_2(9)\cong \mathrm{Alt}(6)$ and $\mathrm{PSL}_4(2)\cong\mathrm{Alt}(8)$. Moreover, when $(n,q)=(3,2)$, we refer to Lines~3 and~4, because $\mathrm{PSL}_3(2)\cong \mathrm{PSL}_2(7)$.

When $L=\mathrm{PSU}_n(q)$, we suppose $n\geqslant 3$;  when $L=\mathrm{PSp}_n(q)$, we suppose $n\geqslant 4$; when $L=\mathrm{P}\Omega_n(q)=\Omega_n(q)$ with $n$ odd, we suppose $n\geqslant 7$; when $L=\mathrm{P}\Omega_n^\pm(q)$ with $n$ even, we suppose $n\geqslant 8$.

\subsection{Structure of the paper}\label{subsection:intro}

In Section~\ref{sec:1}, we collect some basic results which we use throughout the whole paper, sometimes without mention.

In Section~\ref{sec:split}, we prove Theorem~\ref{CASE 1} for the almost simple groups having socle a sporadic, or an exceptional, or an alternating group. 

In the rest of the paper we deal with the classical groups. In Section~\ref{sec:classicalpsl}, we consider the linear groups $\mathrm{PSL}_n(q)$. In Section~\ref{sec:classicalpsu}, we consider the unitary groups $\mathrm{PSU}_n(q)$. In Section~\ref{sec:classicalpsp}, we consider the symplectic groups $\mathrm{PSp}_n(q)$. In Section~\ref{sec:classicalodd}, we consider the odd dimensional orthogonal groups $\mathrm{P}\Omega_n(q)=\Omega_n(q)$. In Section~\ref{sec:classicaloeven-}, we consider the even dimensional orthogonal groups $\mathrm{P}\Omega_n^-(q)$ having Witt defect $1$. 

\section{An additional maximal factorization of an almost simple group}\label{sec:omission}

 A  computation with the computer algebra system  \textsc{Magma}~\cite{magma} yields that there are  factorizations
 \begin{equation}\label{missing1}
 \Omega_8^+(4).\langle \phi \rangle=N_2^-\cdot \mathrm{SO}_8^-(2)=\mathrm{SO}_8^-(2)\cdot N_2^-,
 \end{equation}
 and
  \begin{equation}\label{missing2}
 \Omega_8^+(16).\langle \phi \rangle=N_2^-\cdot (\mathrm{SO}_8^-(4).2)=(\mathrm{SO}_8^-(4).2)\cdot N_2^-,
 \end{equation}
 where the subgroup $\SO_8^-(q^{1/2})\leqslant\Omega_8^+(q)$ is the image of a $\mathcal{C}_5$ subgroup under a triality automorphism, and $\phi$ is a non-identity field automorphism of order $f$, where $q=2^f$. Recall that we are using the notation in~\cite{KL} and hence, in particular, $N_2^-$ is the stabilizer of a $2$-dimensional anisotropic subspace of $V=\mathbb{F}_q^8$. 
 Moreover, these factorizations do not lead to a factorization of the simple group $L=\Omega_8^+(q)$; actually, these factorization exists only in the almost simple groups $\mathrm{Aut}(\Omega_8^+(q))$-conjugate to $\Omega_8^+(q).\langle\phi\rangle$ and in no other almost simple groups with socle $\Omega_8^+(q)$. Observe that, using triality, we have exactly three possibilities for $\Omega_8^+(q).\langle\phi\rangle$.  Both factors in each of these factorisations are core-free maximal subgroups and so these factorisations are max+ factorisations in the terminology of \cite{LPS2}. Thus \cite[Table 4]{LPS1} should have the rows of Table \ref{table:extra} added.

\begin{table}[!ht]
\begin{tabular}{|llllp{4cm}l|}\hline
$L$ & $*$ or $\dagger$& $A\cap L$ & $B\cap L$ & Remark & $Y$ column\\\hline
$\Omega_8^+(4)$ & $*$ &$(5\times \Omega_6^-(4)).2$ & $\Omega_8^-(2)$ & $G= L.2$, $A$ in $\mathcal{C}_1$ or $\mathcal{C}_3$, $B$ in $\mathcal{C}_5$ or $\mathcal{C}_9$ depending on choice of $A$. Two possible $B$ for each $A$. Moreover, $G$ contains a field automorphism.  &\\ &&&&& \\
$\Omega_8^+(16)$ & $*$ &$(17\times \Omega_6^-(16)).2$ & $\Omega_8^-(4)$ & $G= L.4$, $A$ in $\mathcal{C}_1$ or $\mathcal{C}_3$, $B$ in $\mathcal{C}_5$ or $\mathcal{C}_9$ depending on choice of $A$. Two possible $B$ for each $A$. Moreover, $G$ contains a field automorphism of order 4.  &\\\hline

\end{tabular}
\caption{Missing maximal factorizations}\label{table:extra}
\end{table}

For $q=4$, the factorisation can be verified by the following steps:
 \begin{enumerate}
     \item construct the action of $\Omega^+_8(4)$ on 2-dimensional subspaces of ``minus type'';
     \item find the normaliser of the induced permutation group in $\Sym(6580224)$ to obtain the permutation group $G$ for  $\Omega^+_8(4).\langle \phi \rangle$;
     \item use the ClassicalMaximals command to construct an appropriate $H=\Omega_8^-(2)$ in $\Omega_8^+(4)$;
     \item find the image of $H$ in $G$ and then determine its normaliser in $G$ to find the appropriate $\SO_8^-(2)$ subgroup;
     \item check that the $\SO_8^-(2)$ is transitive in this action.
 \end{enumerate}
 For $q=16$ the groups are too large to do  many of these steps. It is possible though to use the ClassicalMaximals command to construct an appropriate $\Omega_8^-(4)$ in $\Omega^+_8(16)$ and then show that it  has an orbit on the set of 2-dimensional subspaces of ``minus type'' whose length is one quarter of the total number of such subspaces. We then  explicitly construct $\mathrm{SO}_8^-(4).2$ in  $\Omega_8^+(16).\langle \phi \rangle$, and the Sylow 2-subgroups of the two potential factors. We can then exhibit that the intersection of the Sylow 2-subgroups has order 8 and is contained in $\Omega_8^-(4)$ and so we do indeed have a factorisation.


\subsection{Dealing with the missing factorization}Using the notation in~\cite{LPS1}, the factorizations in~\eqref{missing1} and \eqref{missing2} were erroneously ruled out in~\cite[p106--107]{LPS1} when considering the possibility $$A\cap L=((q+1)/d\times\Omega_6^-(q)).2^d/Z \hbox{ and }B\cap L=\Omega_8^-(q^{1/2}),$$ where $d:=\gcd(2,q-1)$ and $Z$ is the group of scalars in $\Omega_8^+(q)$. The argument there for $q$ even only rules out a factorization of the simple group $L$ and misses a subtlety due to triality. We now provide a complete analysis of this case following that in~\cite{LPS1}.

Let $G$ be an almost simple group having socle $L:=\mathrm{P}\Omega_8^+(q)$ and let $A$ and $B$ be maximal subgroups of $G$. Suppose that $G=AB$ with $A\cap L=((q+1)/d\times\Omega_6^-(q)).2^d/Z$ and $B\cap L=\Omega_8^-(q^{1/2})$. As in Section~\ref{notation}, we let $V$ be an 8-dimensional vector space over the finite field $\mathbb{F}_q$ equipped with a non-degenerate quadratic form of plus type with $q=r^f$ for some prime $r$ and some even positive integer $f$. By applying a suitable triality automorphism if necessary we may assume that $A=N_2^-$, the stabilizer in $G$ of an anisotropic 2-dimensional subspace. Moreover, by \cite{Kleidman2} the maximality of $A$  in $G$ implies that $G\leqslant  \mathrm{P}\GammaO_8^+(q)$. Let $X\cong\Omega_8^-(q^{1/2})$ be a subgroup of $L$ in $\mathcal{C}_5$, that is, a subfield subgroup. By \cite{Kleidman2}, we may take $B\cap L=X^{\tau^a}$ for some $a\in\{0,1,2\}$, where $\tau$ is a triality automorphism of $L$. Write $k=\tau^a$. Now $X$ has a subgroup $$Y=(\mathrm{SO}_4^+(q^{1/2})\times\mathrm{SO}_4^-(q^{1/2}))\cap X$$ fixing an orthogonal sum decomposition $V=W_1\perp W_2$, where $W_1$ and $W_2$ are both non-degenerate 4-dimensional subspaces of $V$ of plus type. By \cite[Lemma~4.1.1]{KL}, we have that
$$Y=(\Omega_4^+(q^{1/2})\times\Omega_4^-(q^{1/2})).2$$
and $Y$ induces $\SO_4^+(q^{1/2})$ and $\SO_4^-(q^{1/2})$ on $W_1$ and $W_2$ respectively. Therefore, the kernel of the action of $Y$ on $W_1$ is $\Omega_4^-(q^{1/2})$ and the kernel of the action of $Y$ on $W_2$ is $\Omega_4^+(q^{1/2})$.
Let $M$ be the stabilizer in $L$ of this decomposition. Now $Y^k\leqslant  B\cap L$ and $Y^k\leqslant  M^k$. By \cite{Kleidman2}, $\tau$ fixes setwise the $L$-conjugacy class of $M$ in $L$ and so $W^k$ fixes an orthogonal  decomposition $V=W_1'\perp W_2'$, with $W_1',W_2'$ both 4-dimensional non-degenerate subspaces of $V$ of plus type. Note that $Y$ has a subgroup of index 2 and so potentially $Y^k$ interchanges $W_1'$ and $W_2'$. Indeed a \textsc{Magma} \cite{magma} calculation shows that this does indeed happen when $q=4$. 
\begin{center}
This seems to have been overlooked by \cite{LPS1} as their argument seems to assume that $Y^k$ fixes both $W_1'$ and $W_2'$.
\end{center} 
We now continue the analysis of this potential factorization, obtaining the missing factorizations in~\cite{LPS1}.

For $i=1,2$, let $K_i$ be the kernel of the action of the stabilizer in  $Y^k$ of $W_i'$ on $W_i'$.  Note that  $(Y^k)_{W_i'}$ induces a subgroup of $\GO^+_4(q)$ on $W_i'$. Now $\Omega_4^+(q^{1/2})\cong\SL_2(q^{1/2})\circ\SL_2(q^{1/2})$ and $\Omega_4^-(q^{1/2})=\PSL_2(q)$.  Since $\GO_4^+(q)$ does not contain a subgroup isomorphic to an index two subgroup of $Y$, each $K_i$ is nontrivial. 

Suppose first that $q=r^f$ is odd.  For $q\neq 9$, by considering the normal subgroups of $Y'$, the unique index 2 subgroup of $Y$, we see that either $\SL_2(q^{1/2})$ or $\Omega_4^-(q^{1/2})$ lies in $K_1$.  On the other hand, for $q=9$, we see that the centralizer in $\GO^+_4(9)$ of $\Omega^-_4(3)$ does not contain an element of order 3 and so we may draw the same conclusion about $K_1$. Then, taking a suitable $2$-dimensional subspace $U\leqslant  W_1'$ and $A:=G_U=\{g\in G\mid U^g=U\}$, we have that $K_1\leqslant  A\cap B$ and so $q^{1/2}$ divides $|A\cap B|$. As $q$ is odd, we have that 
\begin{align*}
|A|_r&= |\Omega_6^-(q)|_r|G:L|_r=|G:L|_rq^6,\\
|B|_r&=|\Omega_8^-(q^{1/2})|_r|G:L|_r=q^6|G:L|_r,\\
|A\cap B|_r&\geqslant q^{1/2},\\
|G|_r&=q^{12}|G:L|_r.
\end{align*} As $G$ does not contain a triality automorphism of $L$, we have that $|G:L|_r$ divides $f_r$ and hence we see that $|A|_r|B|_r < |G|_r|A\cap B|_r$, contradicting $G=AB$. Therefore, when $q$ is odd, there are no factorizations, as predicted by~\cite{LPS1}.

Now suppose that $q$ is even. Then $|A|_2= 2|\Omega_6^-(q)|_2|G:L|_2=2q^6|G:L|_2$ and $|B|_2= q^6|G:L|_2$, while $|G|_2=q^{12}|G:L|_2$. Since $G=AB$, this implies that 
$|A\cap B|_2=  2|G:L|_2.$ Note also that 
\begin{equation}\label{eq:condition}
|G:L|_2\hbox{ divides }2f_2.
\end{equation}  Suppose first that $B\cap L=X$.  Then $W_1'=W_1$ and $W_2'=W_2$. Choose $U\leqslant  W_1$ to be a 2-dimensional subspace of minus type and take $A:=G_U$. Since $Y^{W_1}=\mathrm{SO}^+_4(q^{1/2})$ and the order of the stabilizer in $\SO_4^+(q^{1/2})$ of a non-degenerate 2-dimensional subspace is divisible by 4, 
 we see that $|A\cap B|_2\geqslant |Y_U|_2=4q$. However, this contradicts~\eqref{eq:condition}, because $|A\cap B|_2= 2|G:L|_2\leqslant  4f_2$. Thus $$B\cap L=X^{\tau^a},$$ where $a\in\{1,2\}$.
When $q=4$ and 16, we have verified with \textsc{Magma} that we do obtain the maximal factorizations in Table~\ref{table:extra}.  Suppose then $q>16$. Thus $q\geqslant 64$, because $f$ is even.

 Then, by~\cite{Kleidman2}, the maximality of $B$ in $G$ implies that $|G:L|$ divides $f$. As $q\geqslant 64$, the group $Y$ has a unique index 2 subgroup, namely $Y'$. Then the kernel of the action of $Y'$ on $W_1'$ is non-trivial and it is not hard to show that $K_1\geqslant \SL_2(q^{1/2})$. Therefore, $$((Y')^{\tau^a})^{W_1}=\mathrm{SL}_2(q^{1/2})\times\mathrm{SL}_2(q).$$ We claim that the stabilizer in this group of a 2-dimensional subspace of $W_1$ of minus type has even order.  We argue by contradiction, and we suppose that there exists a $2$-dimensional subspace $U$ of minus type of $W_1$ with the property that the stabilizer in $\SL_2(q^{1/2})\times\SL_2(q)$ has odd order. Let us denote by $S_1$ and $S_2$ the two simple direct factors of $\Omega_4^+(q)$. Let $N:=\SL_2(q^{1/2})\times\SL_2(q)$ and let $\Omega$ be the collection of all $2$-dimensional subspaces of $W_1$ of minus type. Routine computations yield
 \begin{equation}\label{fail}|\Omega|=\frac{q^2(q-1)^2}{2}.
 \end{equation}  Without loss of generality we may suppose that $S_2\subseteq M$. Observe that 
 $$|(\Omega_2^-(q)\times\Omega_2^-(q)).2|=2(q+1)^2 \hbox{ and }|N|=q^{3/2}(q-1)(q^2-1).$$
Since $\gcd(q+1,q-1)=1$ and since we are assuming that $M_U$ has odd order, we deduce that $|N_U|$ divides $q+1$ and $N_U\leqslant S_2$. From one hand we deduce that the $N$-orbit containing $U$ has cardinality divisible by $q^{3/2}$ and, on the other hand, we deduce that $S_1$ centralizes $N_U$.
 As $\Omega_4^+(q)=S_1N$, we deduce that each orbit of $M$ on the $2$-dimensional subspaces of $W_1$ of minus type has order divisible by $q^{3/2}$. The number of $N$-orbits is thus $|S_1:S_1\cap N|$ and hence $q^{1/2}\cdot q^{3/2}=q^2$ divides $|\Omega|$; however, this contradicts~\eqref{fail}.

\subsection{Impact of the missing factorizations in other work}
The factorizations in \cite{LPS1} have been extensively used. Now, we comment how this extra factorization influences other work relying
in the classification in~\cite{LPS1}.
\begin{enumerate}
\item Since the $Y$ column for the new factorizations are empty, these new factorizations do not give an example of a primitive almost simple group of degree $n$ as a proper subgroup of a primitive almost simple group of the same degree but different socle. Thus no new examples arise for  \cite[Table VI]{LPSmaxsubs}.
\item The classification of maximal factorizations of almost simple groups was used in \cite{LPSregsubs} to determine all regular subgroups of the primitive almost simple groups. These new factorizations provide four new primitive almost simple groups with a core-free transitive subgroup (namely the action of $G$ on the set of cosets of $A$ and the action of $G$ on the set of cosets of $B$). We have checked with the help of a computer that none of these  new primitive actions admits regular subgroups and hence no exception arises in~\cite{LPSregsubs}.

\item For these new factorizations $|G:A|$ and $|G:B|$ are even and so these are not coprime factorizations and so no new factorization needs to be added to \cite[Table 1]{nostro}.

\item  The maximal factorisations in \cite{LPS1} are used in \cite{LX} to determine the factorisations of almost simple group with one of the two factors solvable. Therefore, in principle, the missing factorizations arising when the socle is $\mathrm{P}\Omega_8^+(4)$ and $\mathrm{P}\Omega_8^+(16)$ could in principle yield factorizations missed by \cite{LX} when using \cite{LPS1}. We have checked with \textsc{Magma} and we confirm that no new factorization arises when one of the two factors is solvable.

\end{enumerate}

\section{Preliminary remarks}\label{sec:1}

\begin{lemma}\label{l: smaller}
Let $G$ be a group, let $M$ and $N$ be subgroups of $G$ with $G=MN$ and let $T$ be a subgroup of $N$.
Then $G=MT$ if and only if $N=(M\cap N)T.$
\end{lemma}
\begin{proof}
If $G=MT$,  then $N=MT\cap N=(M\cap N)T$, as required.
Conversely, if $N=(M\cap N)T$, then $G=MN =M(M\cap N)T = MT.$
\end{proof}

The following elementary lemma is one of the ingredients for  our proof of Theorem~\ref{CASE 1} when the socle of $G$ is an alternating group. A permutation $g\in \Sym(n)$ is said to be {\em \textbf{semiregular}} if all  the orbits of $\langle g\rangle$ on $\{1,\ldots,n\}$ have the same length.

\begin{lemma}\label{le3}Let $g$ be in $\Sym(n)\setminus\{1\}$. If $\nor {\Sym(n)}{\langle g\rangle}$ is transitive on $\{1,\ldots,n\}$, then $g$ is semiregular.
\end{lemma}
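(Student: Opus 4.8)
The plan is to exploit the fact that $N:=\nor{\Sym(n)}{\langle g\rangle}$ acts on the set of orbits of $\langle g\rangle$ on $\{1,\dots,n\}$ and that this action preserves orbit lengths; transitivity of $N$ will then force all orbits to share a common length. First I would observe that, since each $h\in N$ normalizes $\langle g\rangle$, conjugation by $h$ restricts to an automorphism of the cyclic group $\langle g\rangle$, and that $h$ carries each $\langle g\rangle$-orbit onto a $\langle g\rangle$-orbit (because $h\langle g\rangle h^{-1}=\langle g\rangle$, so the image of the orbit through $\alpha$ is the orbit through $h\alpha$).

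The key step is to show that $h$ sends an orbit to an orbit of the \emph{same} length. For a point $\alpha$, the length of its $\langle g\rangle$-orbit equals the index $|\langle g\rangle:\langle g\rangle_\alpha|$, where $\langle g\rangle_\alpha$ denotes the point stabilizer. A direct computation gives the identity $\langle g\rangle_{h\alpha}=h\,\langle g\rangle_\alpha\,h^{-1}$, using that $h$ normalizes $\langle g\rangle$. Since $\langle g\rangle$ is cyclic, it has a unique subgroup of each order, so the automorphism of $\langle g\rangle$ induced by $h$ fixes every subgroup setwise; in particular $h\langle g\rangle_\alpha h^{-1}$ has the same order as $\langle g\rangle_\alpha$. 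Hence the $\langle g\rangle$-orbits through $\alpha$ and through $h\alpha$ have equal length.

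Finally I would invoke transitivity: given any two points $\alpha,\beta\in\{1,\dots,n\}$, choose $h\in N$ with $h\alpha=\beta$; the previous step shows the $\langle g\rangle$-orbits containing $\alpha$ and $\beta$ have the same length. As $\alpha$ and $\beta$ are arbitrary and every point lies in a unique $\langle g\rangle$-orbit, all orbits of $\langle g\rangle$ have a common length, which is precisely the assertion that $g$ is semiregular.

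There is no serious obstacle here: the argument is entirely formal once one records the two elementary facts that conjugation by a normalizing element sends point stabilizers to point stabilizers and that an automorphism of a finite cyclic group preserves the orders of subgroups. The only point requiring a little care is the stabilizer identity $\langle g\rangle_{h\alpha}=h\langle g\rangle_\alpha h^{-1}$, which I would verify by unwinding $g^k(h\alpha)=h\alpha\iff (h^{-1}g^kh)(\alpha)=\alpha$ and using $h^{-1}\langle g\rangle h=\langle g\rangle$. Note that the hypothesis $g\neq 1$ plays no essential role in the argument, since for $g=1$ every orbit has length $1$ and the conclusion is vacuous.
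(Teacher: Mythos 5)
Your proof is correct and follows essentially the same route as the paper, which simply notes that $N$ permutes the $\langle g\rangle$-orbits preserving their lengths and then invokes transitivity. (Your stabilizer computation is fine but more than is needed: since $h$ normalizes $\langle g\rangle$, it maps the orbit $\alpha^{\langle g\rangle}$ bijectively onto $(h\alpha)^{\langle g\rangle}$, so equality of lengths is immediate without appealing to the uniqueness of subgroups of a cyclic group.)
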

\begin{proof}
Write $N:=\nor {\Sym(n)}{\langle g\rangle}$. Clearly, $N$ permutes the orbits of $\langle g\rangle$ having the same length. Since $N$ is transitive on $\{1,\ldots,n\}$, we obtain that all $\langle g\rangle$-orbits have the same length.
\end{proof}

For the rest of this paper, $G$ denotes an almost simple group with socle $L$ having two group elements $x,y\in G\setminus\{1\}$ with $$G=\nor G {\langle x\rangle}\nor G {\langle y\rangle}.$$ Write 
$$X:=\nor G{\langle x\rangle}\hbox{ and }Y:=\nor G {\langle y\rangle},$$ for short.


\begin{lemma}\label{le1}We have $$G = \nor G { \langle x\rangle}\nor G { \langle y\rangle^g } = \nor G { \langle x\rangle^g }\nor G {\langle y \rangle},$$ for every $g \in G$.
Let  $\langle g_1\rangle,\ldots,\langle g_\ell\rangle $ be a set of representatives for the conjugacy classes of non-identity  cyclic subgroups of $G$. Then there exists $i,j\in \{1,\ldots,\ell\}$ with $G=\nor G {\langle g_i\rangle}\nor G {\langle g_j\rangle}$.
\end{lemma}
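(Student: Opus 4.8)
The plan is to isolate the purely group-theoretic fact underlying both statements: the property of being a factorization of $G$ is invariant under conjugating either factor. I would first record the elementary observation that if $A,B\leqslant G$ satisfy $G=AB$, then $AzB=G$ for \emph{every} $z\in G$. Indeed, writing $z=ab$ with $a\in A$ and $b\in B$ (possible since $z\in G=AB$), one gets $AzB=AabB=(Aa)(bB)=AB=G$. This single identity does essentially all of the work.

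From it I would deduce, for every $g\in G$, the two factorizations $G=AB^g$ and $G=A^gB$. For the first, $AB^g=A(g^{-1}Bg)=(Ag^{-1}B)g=Gg=G$, where $Ag^{-1}B=G$ is the claim applied to $z=g^{-1}$. For the second, $A^gB=(g^{-1}Ag)B=g^{-1}(AgB)=g^{-1}G=G$, where now $AgB=G$ is the claim applied to $z=g$. Part~1 of the lemma is then immediate: taking $A=X=\nor G{\langle x\rangle}$ and $B=Y=\nor G{\langle y\rangle}$, which satisfy $G=XY$ by our standing hypothesis, and using that the normalizer of a conjugate subgroup is the conjugate normalizer, $\nor G{\langle y\rangle^g}=\nor G{\langle y\rangle}^g=Y^g$ and $\nor G{\langle x\rangle^g}=X^g$, the two displayed identities read precisely $G=XY^g$ and $G=X^gY$, which are the two factorizations just established.

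For the second part, I would note that each of the nontrivial cyclic subgroups $\langle x\rangle$ and $\langle y\rangle$ is $G$-conjugate to one of the chosen representatives; write $\langle x\rangle=\langle g_i\rangle^h$ and $\langle y\rangle=\langle g_j\rangle^{h'}$ for suitable $h,h'\in G$. Then $X=\nor G{\langle g_i\rangle}^h$ and $Y=\nor G{\langle g_j\rangle}^{h'}$, so the hypothesis $G=XY$ becomes $G=\nor G{\langle g_i\rangle}^h\,\nor G{\langle g_j\rangle}^{h'}$. Applying the two identities of the previous paragraph to strip off the conjugating elements one factor at a time (first removing the conjugation by $h'$ on the right factor, then by $h$ on the left factor) yields $G=\nor G{\langle g_i\rangle}\nor G{\langle g_j\rangle}$, as required. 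I do not anticipate a genuine obstacle here: the entire content is the conjugation-stability of factorizations, and once the identity $AzB=G$ is in hand the remainder is routine bookkeeping with normalizers of conjugate subgroups.
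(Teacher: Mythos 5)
Your proposal is correct and rests on the same mechanism as the paper's proof: decomposing the conjugating element via the factorization $G=XY$ and absorbing each piece into the factor it belongs to (you package this as the identity $AzB=G$, while the paper writes $g=uv$ with $u\in X$, $v\in Y$ and conjugates the whole factorization by $v$). The deduction of the second part by conjugating the representatives is exactly the step the paper leaves to the reader.
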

\begin{proof}
Let $g$ be in $G$. We have $g=uv$, for some $u\in X={\bf N}_G(\langle x\rangle)$ and $v\in Y={\bf N}_G(\langle y\rangle)$. Now,
\begin{align*}
\nor G{\langle x\rangle^g}\nor G {\langle y\rangle}&=\nor G {\langle x\rangle^v}\nor G {\langle y\rangle}=\nor G {\langle x\rangle^v}\nor G {\langle y\rangle^v}\\
&=\nor G {\langle x\rangle}^v\nor G {\langle y\rangle}^v=(XY)^v=G^v=G.
\end{align*}
The other case is similar. Now, the rest of the proof follows from the fact that $\langle x\rangle =\langle g_i\rangle^{h_x}$ and $\langle y\rangle=\langle g_j\rangle^{h_y}$, for some $i,j\in \{1,\ldots,\ell\}$ and $h_x,h_y\in G$.
\end{proof}

Lemma~\ref{le1} gives a very efficient test to check whether $G=\nor G { \langle x\rangle}\nor G { \langle y\rangle}$. For example, for $M_{11}$ it is immediate to see with~\cite{ATLAS} that $|\nor {M_{11}} {\langle g \rangle}|\leqslant 55$, for every $g\in M_{11}\setminus\{1\}$. As $|M_{11}|>55^2$, we see that Theorem~\ref{CASE 1} holds true for $M_{11}$, that is, $M_{11}$ has no factorization of the form $M_{11}={\bf N}_{M_{11}}(\langle x\rangle){\bf N}_{M_{11}}(\langle y\rangle)$ with $x,y\in M_{11}\setminus\{1\}$.

We also have the following useful lemma.

\begin{lemma}\label{lem:primeorder}
Suppose that $G={\bf N}_G(\langle x\rangle){\bf N}_G(\langle y\rangle)$ for some $x,y\in G$. Then 
$G={\bf N}_G(\langle x'\rangle){\bf N}_G(\langle y'\rangle)$ for some $x',y'\in G$ of prime order.
\end{lemma}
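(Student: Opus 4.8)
The plan is to exploit the fact that inside a finite cyclic group there is, for each prime dividing its order, a \emph{unique} (hence characteristic) subgroup of that prime order, so that passing to such a subgroup only enlarges the normalizer. By the standing convention we may assume $x,y\in G\setminus\{1\}$, since otherwise the factorization $G=\nor G{\langle x\rangle}\nor G{\langle y\rangle}$ is degenerate.

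First I would set $m:=|x|$ and choose a prime $t$ dividing $m$, and define $x':=x^{m/t}$, so that $x'$ has order exactly $t$. The key observation is that, in the cyclic group $\langle x\rangle$ of order $m$, the subgroup $\langle x'\rangle$ is the unique subgroup of order $t$; consequently it is characteristic in $\langle x\rangle$. Therefore every element of $G$ normalizing $\langle x\rangle$ also normalizes $\langle x'\rangle$, which yields the inclusion
$$
X=\nor G{\langle x\rangle}\leqslant \nor G{\langle x'\rangle}.
$$
Running the identical argument for $y$ — setting $m':=|y|$, picking a prime $s\mid m'$, and putting $y':=y^{m'/s}$ of order $s$ — gives $Y=\nor G{\langle y\rangle}\leqslant \nor G{\langle y'\rangle}$.

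Finally I would combine the two inclusions multiplicatively: since $X\leqslant \nor G{\langle x'\rangle}$ and $Y\leqslant \nor G{\langle y'\rangle}$, we have $XY\subseteq \nor G{\langle x'\rangle}\nor G{\langle y'\rangle}$, and hence
$$
G=XY\subseteq \nor G{\langle x'\rangle}\nor G{\langle y'\rangle}\subseteq G,
$$
forcing equality $G=\nor G{\langle x'\rangle}\nor G{\langle y'\rangle}$ with $x',y'$ of prime order, as required. This argument is entirely elementary and I do not anticipate a genuine obstacle; the only point needing care is the observation that the prime-order subgroup of $\langle x\rangle$ is characteristic (so that normalizers do not shrink), together with the harmless exclusion of the trivial cases $x=1$ or $y=1$.
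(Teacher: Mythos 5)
Your proof is correct and follows essentially the same route as the paper's: pass to $x'=x^{|x|/t}$ and $y'=y^{|y|/s}$ of prime order, observe that $\nor G{\langle x\rangle}\leqslant \nor G{\langle x'\rangle}$ (you justify this via the characteristic prime-order subgroup of a cyclic group, which the paper leaves implicit), and conclude by multiplying the inclusions. No issues.
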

\begin{proof}
Let $p_x$ divide $|x|$ and $p_y$ divide $|y|$ be primes. Then $x':=x^{|x|/p_x}$ and $y':=y^{|y|/p_y}$ have prime order. Moreover, ${\bf N}_G(\langle x\rangle) \leqslant {\bf N}_G(\langle x'\rangle)$ and 
$ {\bf N}_G(\langle y\rangle)\leqslant {\bf N}_G(\langle y'\rangle)$. Thus the result follows.
\end{proof}

\section{Proof of Theorem~\ref{CASE 1} for sporadic, exceptional  and alternating simple groups}\label{sec:split}

We are now ready to prove Theorem~\ref{CASE 1} when $L$ is a sporadic simple group, an exceptional group of Lie type or an alternating group.

\begin{proposition}\label{pr1}$L$ is not a sporadic simple group.
\end{proposition}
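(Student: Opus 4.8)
The plan is to rule out each of the $26$ sporadic simple groups as a possible socle of $G$, using the efficient criterion provided by Lemma~\ref{le1}. That lemma tells us that, in order to decide whether $G=\nor G{\langle x\rangle}\nor G{\langle y\rangle}$ is possible for some $x,y\in G\setminus\{1\}$, it suffices to examine the normalizers $\nor G{\langle g_i\rangle}$ as $\langle g_i\rangle$ ranges over representatives of the conjugacy classes of non-identity cyclic subgroups of $G$. The guiding heuristic, already illustrated in the excerpt for $M_{11}$, is the order inequality: if $|\nor G{\langle x\rangle}|\,|\nor G{\langle y\rangle}|<|G|$ for \emph{all} choices of $x,y$, then no factorization can occur. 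So the first and crudest step is to bound the largest normalizer of a cyclic subgroup: whenever the maximum of $|\nor G{\langle g\rangle}|$ over $g\in G\setminus\{1\}$ has square smaller than $|G|$, we are immediately done.

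Since $G$ is almost simple with sporadic socle $L$, we have $L\leqslant G\leqslant\Aut(L)$, and for the sporadic groups $|\Out(L)|\leqslant 2$, so there are at most two groups $G$ to consider for each $L$. The normalizer data for cyclic subgroups is available from~\cite{ATLAS} (and can be confirmed in \textsc{GAP} or \textsc{Magma} via the character table libraries, which record class sizes and hence centralizer orders, together with the fusion/power-map information needed to pass from $\nor G{\langle g\rangle}$ to the normalizer of the cyclic group). Concretely, for each $g$ one has $|\nor G{\langle g\rangle}|=|\cent G g|\cdot|\nor G{\langle g\rangle}:\cent G g|$, where the index divides $|\Aut(\langle g\rangle)|=\varphi(|g|)$; combined with the class-size tables this pins down every normalizer order. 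The bulk of the work is therefore a finite, mechanical check: for each of the $26$ sporadic socles (and their degree-two extensions where they exist) compute $\max_{g\ne 1}|\nor G{\langle g\rangle}|$ and compare its square to $|G|$.

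For the overwhelming majority of the sporadic groups the crude square-order bound already suffices, exactly as for $M_{11}$. The main obstacle will be the handful of groups possessing an unusually large normalizer of a cyclic subgroup relative to $|G|$ — typically a maximal subgroup that is itself the normalizer of a cyclic Sylow or a large cyclic torus — where the product of two maximal normalizer orders can exceed $|G|$ and the coarse inequality fails. For those finitely many exceptional pairs $(G,\langle g_i\rangle,\langle g_j\rangle)$ one must argue more carefully: either invoke the classification of maximal factorizations of $G$ from~\cite{LPS1,LPS2} to see that no maximal subgroup containing $\nor G{\langle g_i\rangle}$ and $\nor G{\langle g_j\rangle}$ yields a genuine factorization, or simply verify directly (e.g.\ with \textsc{Magma}, using Lemma~\ref{le1} and checking whether $|G|=|\nor G{\langle g_i\rangle}|\,|\nor G{\langle g_j\rangle}|/|\nor G{\langle g_i\rangle}\cap\nor G{\langle g_j\rangle}|$ can hold) that the putative factorization does not exist. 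In every case the conclusion is that no factorization $G=\nor G{\langle x\rangle}\nor G{\langle y\rangle}$ occurs, so $L$ cannot be sporadic, and no triple is contributed to Table~\ref{table1}.
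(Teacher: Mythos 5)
Your strategy is sound in outline but takes a genuinely different route from the paper. The paper's proof is a one-line appeal to~\cite{mgfact}, which determines \emph{all} factorizations (not merely the maximal ones) of almost simple groups with sporadic socle; one then inspects that short list and observes that in no case are both factors normalizers of non-trivial cyclic subgroups. Your approach instead works directly from ATLAS normalizer data and the order inequality of Lemma~\ref{le1}, falling back on the maximal factorization classification or direct computation when the crude bound fails. What the paper's route buys is that the entire case analysis has already been done once and for all in~\cite{mgfact}; what your route buys is independence from that classification, at the cost of a $26$-fold computation that you describe but do not carry out. One factual caveat: your claim that the square-order bound suffices for ``the overwhelming majority'' of sporadic groups is too optimistic. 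Many sporadic groups have involution centralizers large enough that the square of the maximal normalizer order exceeds $|G|$ (for instance $\mathrm{J}_2$ with $|\cent{\mathrm{J}_2}{2A}|=1920$ and $1920^2>604800$, and similarly $\mathrm{HS}$, $M_{24}$, the Conway and Fischer groups, $B$ and $M$), so the ``handful of exceptional pairs'' is in fact a substantial fraction of the cases and the bulk of the work lands in your fallback step. That fallback is legitimate --- for $x\neq 1$ the normalizer $\nor G{\langle x\rangle}$ is automatically core-free, so the maximal factorizations in~\cite{LPS1,LPS2} (or a direct \textsc{Magma} check via Lemma~\ref{le1}) do settle those cases --- but as written the proposal is a plan whose decisive portion remains to be executed, whereas the paper discharges it by citation.
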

\begin{proof}
All factorizations of almost simple groups having socle a sporadic simple group are determined in~\cite{mgfact}. The result follows by inspection.
\end{proof}

\begin{proposition}\label{pr2}$L$ is not an exceptional group of Lie type.
\end{proposition}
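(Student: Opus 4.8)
The plan is to reduce the statement to the classification of maximal factorizations of almost simple exceptional groups of Lie type. By the discussion following Theorem~\ref{CASE 1}, if $G=\nor G{\langle x\rangle}\nor G{\langle y\rangle}$, then either both normalizers lie inside core-free maximal subgroups $A$ and $B$ giving a maximal factorization $G=AB$ classified in~\cite{LPS1}, or one of the normalizers is contained only in maximal subgroups that contain the socle $L$, a situation governed by~\cite[Table~$1$]{LPS2}. So first I would invoke the theorem of Hiss and Malle, together with the work of Liebeck, Praeger and Saxl, that there are essentially no core-free maximal factorizations $G=AB$ of an almost simple group whose socle $L$ is an exceptional group of Lie type, with the sole exceptions involving $G_2(q)$ (for $q$ even, where $G_2(q)=\Sp_6(q)$ relates to orthogonal configurations) and the small-rank coincidences. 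In other words, the key external input is that the list of maximal factorizations in~\cite{LPS1} for exceptional socles is extremely short.

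Next I would run through each entry of that short list. For each triple $(G,A,B)$ I would check whether $A$ and $B$ can actually arise as $\nor G{\langle x\rangle}$ and $\nor G{\langle y\rangle}$ for some non-identity $x,y$; by the remark after Theorem~\ref{CASE 1}, it suffices to decide whether $A$ contains a non-identity element $x$ with $\nor A{\langle x\rangle}=A$, i.e.\ whether $A$ is itself the normalizer of a cyclic subgroup (and similarly for $B$). Typically the factors $A,B$ appearing in these exceptional factorizations are parabolic subgroups or large subgroups like $\Or{}{}$-type or $\GU$-type subgroups whose structure precludes their being equal to the full normalizer of a cyclic subgroup: the order of $\nor G{\langle g\rangle}$ for any $g\neq 1$ is far too small compared to $|A|$ or $|B|$. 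This is exactly the kind of order estimate illustrated for $M_{11}$ in the excerpt via Lemma~\ref{le1}: one shows $|\nor G{\langle g\rangle}|$ is bounded well below $\sqrt{|G|}$, or below the index forcing a factorization, so no factorization using normalizers can exist.

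Concretely, I would combine three tools. First, Lemma~\ref{lem:primeorder} lets me assume $x,y$ have prime order, which tightly constrains the possible normalizers. Second, for a semisimple or unipotent element $g$ of prime order in an exceptional group, the normalizer $\nor G{\langle g\rangle}$ has order bounded by the centralizer order times a small factor (the automorphisms of $\langle g\rangle$, dividing $|g|-1$), and centralizer orders in exceptional groups are well documented (e.g.\ in~\cite{GLS,BGbook}). Third, I would compare $|X|\,|Y|\geqslant|G|$ with these bounds: for a genuine factorization one needs $|X|\,|Y|\geqslant|G|$, and showing $|\nor G{\langle g\rangle}|^2<|G|$ for all $g\neq 1$ immediately kills the possibility, just as for $M_{11}$. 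For the handful of small exceptional groups where this crude bound fails (for instance low rank over small fields, such as $^2B_2(8)$, $^2G_2(3)'$, $G_2(2)'$, or $^3D_4(2)$), I would appeal directly to the explicit factorization lists and, where necessary, to a \textsc{Magma}~\cite{magma} verification as is done elsewhere in the paper.

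The main obstacle I anticipate is the exceptional factorizations that genuinely exist, notably those involving $G=G_2(q)$ with $q$ even and its subgroups $\SL_3(q).2$, $\SU_3(q).2$, and the factorization $G_2(4)=\SU_3(4){:}2\cdot J_2$ or the $\Omega^{\pm}$-type factors arising from the identification $G_2(q)\leqslant\Omega_7(q)$; here one must argue carefully that neither factor is the normalizer of a cyclic group. The resolution is structural: each such factor is a (near-)simple group of Lie type or has a large non-abelian simple section, so it properly contains the derived subgroup of any cyclic normalizer it meets, and in particular cannot equal $\nor G{\langle g\rangle}$, whose structure is (cyclic or close to cyclic) $\times$ (something small). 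I expect that verifying this exclusion uniformly across the sporadically-behaved small cases, rather than any single hard estimate, will be where the real care is needed.
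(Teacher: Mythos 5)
Your starting point---reducing to the Liebeck--Praeger--Saxl classification of factorizations with exceptional socle---is the same as the paper's, which quotes \cite[Table~5]{LPS1} directly (and notes that this table lists \emph{all} factorizations, not only the maximal ones, so that $X\cap L$ and $Y\cap L$ are pinned down exactly; the relevant socles are $G_2(3^f)$, $F_4(2^f)$ and $G_2(4)$). But your exclusion step has a genuine gap. You propose to rule out each candidate factor either by the crude bound $|\nor G{\langle g\rangle}|^2<|G|$ or by the structural claim that $\nor G{\langle g\rangle}$ is ``(cyclic or close to cyclic) $\times$ (something small)'' and so cannot contain a large simple section. The structural claim is simply false: if $g$ is an involution then $\nor G{\langle g\rangle}=\cent G g$, which can be a very large almost simple group (indeed Table~\ref{table1} of this paper is populated precisely by such examples, e.g.\ $\cent {\mathrm{PGL}_n(q)}x\cong\mathrm{PGSp}_n(q)$). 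The order bound fares no better on the cases that matter: in $G_2(q)$ and $F_4(q)$ the centralizers of long root elements and of field automorphisms comfortably exceed $|G|^{1/2}$, so the $M_{11}$-style estimate does not close the two infinite families $G_2(3^f)$ and $F_4(2^f)$ (the latter you do not mention at all), and \textsc{Magma} cannot be used to sweep up an infinite family. There is also a misreading of the reduction: one needs $\nor G{\langle x\rangle}\leqslant A$ with the factorization surviving, not $\nor G{\langle x\rangle}=A$, so ``is $A$ itself the normalizer of a cyclic subgroup?'' is not the right question.

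The argument that actually works, and that the paper uses, runs in the opposite direction: since $X\cap L$ is known from \cite[Table~5]{LPS1} to be (an extension of) $\SL_3(q)$, $\SU_3(4)$ or $\Sp_8(q)$, and since this group normalizes $\langle x\rangle$ and has trivial centre, its quasisimple socle must \emph{centralize} $x$; hence $x\notin L$, and by \cite[Proposition~4.9.1]{GLS} $x$ is a field or graph-field automorphism, so $\cent Lx$ is $G_2(q_0)$, ${}^2G_2(q)$, $F_4(q_0)$, ${}^2F_4(q)$ or $G_2(2)$. One then checks that none of these contains the required subgroup $\SL_3(q)$, $\Sp_8(q)$ or $\SU_3(4)$ --- a contradiction. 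In other words, the point is not that $X$ is too small to be a factor, but that the identity of the factor forces $x$ to be an outer automorphism whose centralizer is incompatible with that factor. Your proposal would need to be repaired along these lines to handle $G_2(3^f)$ and $F_4(2^f)$.
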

\begin{proof}
Suppose that $G=XY$ and by Lemma \ref{lem:primeorder} we may assume that both $x$ and $y$ have prime order. From~\cite[Table~$5$]{LPS1}, we see that $L$ is one of the following groups:
\begin{itemize}
\item $G_2(q)$ with $q=3^f$,
\item $F_4(q)$ with $q=2^f$,
\item $G_2(4).$
\end{itemize}
Note that \cite[Table~$5$]{LPS1} gives all factorisations of $G$, not just the maximal ones, and so lists the possibilities for $X$ and $Y$. 

Suppose first that $L=G_2(4)$. Then interchanging $X$ and $Y$ if necessary we have that $X\cap L=\SU_3(4).4\cap L$. Since $X\cap L$ has trivial centre it follows that $x\notin L$ and so is a field automorphism of order 2. However, this implies that $C_L(x)\cong G_2(2)$, which does not contain $\SU_3(4)$, a contradiction.

Next suppose that $L=G_q(q)$ with $q=3^f$. Then interchanging $X$ and $Y$ if necessary we have that $X\cap L=\SL_3(q)$ or $\SL_3(q).2$. In both cases $X\cap L$ has trivial centre and so $\SL_3(q)\leqslant {\bf C}_A(x)$. Moreover, $x\notin L$. Hence by \cite[Proposition 4.9.1]{GLS} $x$ is either a field or graph-field automorphism of $L$ and hence $C_L(x)=G_2(q_0)$ with $q=q_0^e$, or ${}^2G_2(q)$, respectively. Neither of these contain $\SL_3(q)$ as a subgroup, a contradiction.
 
 Similarly, if $L=F_4(q)$ then $X\cap L=\Sp_8(q)$. Hence $\Sp_8(q)\leqslant {\bf C}_A(x)$ and $x\notin L$. Again,  $x$ is either a field or graph-field automorphism of $L$ and hence $C_L(x)=F_4(q_0)$ with $q=q_0^e$, or ${}^2F_4(q)$, respectively. Neither of these contain $\Sp_8(q)$ as a subgroup, and so we obtain another contradiction.
 
 \end{proof}

\begin{proposition}\label{pr3}
If $L=\Alt(n)$, then the triple $(G,x,y)$ is in Line~$1$ or~$2$ of Table~$\ref{table1}$.
\end{proposition}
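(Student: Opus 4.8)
The plan is to proceed exactly as in the Lie type case, by first invoking Lemma~\ref{lem:primeorder} to assume that $x$ and $y$ both have prime order, and then consulting the classification of maximal factorizations of almost simple groups with alternating socle. For $L=\Alt(n)$, the relevant maximal factorizations are recorded in~\cite[Theorem~D and the associated tables]{LPS1}; the key structural fact is that, up to interchanging $X$ and $Y$, one factor is intransitive (contained in some $(\Sym(k)\times\Sym(n-k))\cap G$) while the other is transitive. In fact the cleanest way to organize the argument is to observe that since $G=XY$ with $X=\nor G{\langle x\rangle}$ and $Y=\nor G{\langle y\rangle}$, at least one of these normalizers must be transitive on $\{1,\ldots,n\}$: if both $X$ and $Y$ were intransitive, contained in maximal intransitive subgroups, then the product of two intransitive subgroups cannot be all of $G$ by an index count, so I would use the factorization tables to see that one factor, say $Y$, is transitive.

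The main tool here is Lemma~\ref{le3}: since $Y=\nor G{\langle y\rangle}$ is transitive, the element $y$ must be semiregular, so all $\langle y\rangle$-orbits have equal length, and in particular (as $y$ has prime order by Lemma~\ref{lem:primeorder}) $y$ is a product of $n/p$ disjoint $p$-cycles for some prime $p$ dividing $n$. The structural information from the maximal factorizations then pins down $Y$ very tightly: a transitive group of the form $\nor G{\langle y\rangle}$ with $y$ semiregular of prime order, appearing as a factor in a maximal factorization of $G$, forces $y$ to be an $n$-cycle with $n$ prime, i.e.\ $\langle y\rangle$ a Sylow $p$-subgroup with $p=n$ and $Y\leqslant \nor{\Sym(n)}{\langle y\rangle}$ of order $n(n-1)$. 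This is the content of Line~1. The remaining exceptional configurations in the factorization tables (the sporadic transitive factors occurring for small $n$, such as subgroups related to $M_{10}$, $\PGL_2(9)$, etc., inside $\Sym(6)$, and the $\Alt(5)\cong\PSL_2(4)$-type coincidences) must be checked individually, and these degenerate cases produce Line~2, namely $G=\Sym(5)$ with $|x|\in\{3,6\}$ and $y$ a $5$-cycle.

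Once $y$ is identified as an $n$-cycle (with $n$ prime) and $X$ as an intransitive factor, I would determine the possibilities for $x$. The factorization $G=XY$ with $|Y|=n(n-1)$ (or $n$, depending on $G$) and $|G|\geqslant |\Alt(n)|$ forces $|X|$ to be large, specifically $|X|\geqslant |G|/|Y|$, and combined with $X=\nor G{\langle x\rangle}$ being intransitive this constrains $x$ to lie in a point stabilizer, giving $x$ a transposition (Line~1) after checking that the normalizer of the cyclic group it generates, together with the $n$-cycle normalizer, genuinely fills out $G$; here the arithmetic condition $n$ prime is exactly what makes $\langle y\rangle$ regular and the factorization work. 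Conversely I must verify that these configurations really are factorizations, which amounts to the index computation $|G|=|X||Y|/|X\cap Y|$ together with transitivity of $Y$ — this is a short direct check. I expect the main obstacle to be the careful bookkeeping for the small-degree exceptional factorizations (the $\Sym(6)$ triality coincidences and the $\Alt(5)$--$\Alt(8)$ isomorphisms with linear groups flagged in Section~\ref{notationtables}), where one must confirm that every transitive factor arising from the tables is accounted for and that no genuinely new triple $(G,x,y)$ outside Lines~1 and~2 survives; the generic case is comparatively routine once Lemma~\ref{le3} has forced $y$ to be an $n$-cycle.
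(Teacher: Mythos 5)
Your skeleton matches the paper's (handle $n\leqslant 6$ by computation, invoke~\cite[Theorem~D]{LPS1} to split the factors into an intransitive one and a transitive one, and land on a transposition times an $n$-cycle with $n$ prime), but the decisive step is asserted rather than proved, and the tool you name for it is too weak. You claim that $Y=\nor G{\langle y\rangle}$ transitive together with $y$ semiregular of prime order (Lemma~\ref{le3}), plus ``appearing as a factor in a maximal factorization,'' forces $y$ to be an $n$-cycle with $n$ prime. Semiregularity alone is nowhere near sufficient: a fixed-point-free involution in $\Sym(2m)$, or more generally a product of $n/p$ disjoint $p$-cycles, is semiregular of prime order and has transitive (imprimitive) normalizer $\Sym(p)\Wr\Sym(n/p)$, and nothing you have said excludes these. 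The paper's actual mechanism is different: first one observes that $k\geqslant 2$ (if $k=1$ then $\Alt(n-1)\unlhd X$ would force $x$ to centralize $\Alt(n-1)$, impossible since $X$ normalizes the nontrivial cyclic group $\langle x\rangle$); then Theorem~D gives that $Y$ is $k$-homogeneous with $2\leqslant k\leqslant 5$, hence primitive by~\cite[Theorem~6]{BP}; and a primitive group with a nontrivial normal cyclic subgroup has socle inside $\langle y\rangle$, forcing $n$ prime and $y$ an $n$-cycle with $Y\leqslant\mathrm{AGL}(1,n)$. That chain (homogeneity $\Rightarrow$ primitivity $\Rightarrow$ cyclic regular socle) is the content of the proposition, and it does not appear in your write-up; Lemma~\ref{le3} cannot substitute for it.

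Two smaller but real omissions. First, once $Y\leqslant\mathrm{AGL}(1,n)$ is $2$- but not $3$-homogeneous, the possibilities are $k=2$ or $(n,k)=(7,5)$ (since $5$-homogeneity of degree $7$ is equivalent to $2$-homogeneity); the paper must kill $(7,5)$ by a separate order count showing $x$ would be a $5$-cycle and then $|X||Y|<|G|$. Your claim that intransitivity of $X$ ``constrains $x$ to lie in a point stabilizer, giving $x$ a transposition'' is exactly the kind of reasoning that misses this case: a $5$-cycle in $\Sym(7)$ lies in a point stabilizer but is not a transposition. (For $k=2$ the correct argument is again via centralizers: $\langle x\rangle\cap\Alt(n-2)=1$ forces $x$ to centralize $\Alt(n-2)$, so $x$ is the transposition generating the $\Sym(2)$ factor.) Second, the reduction to prime order via Lemma~\ref{lem:primeorder} only classifies prime-order pairs; Line~2 of Table~\ref{table1} contains $|x|=6$, so you would still owe an argument recovering the composite-order elements, which the paper avoids by treating $n\in\{5,6\}$ entirely by machine.
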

\begin{proof}For $5\leqslant n\leqslant 6$, the result follows by a computation using Lemma~\ref{le1}. We obtain the examples in Lines~1 or~2 of Table~\ref{table1}. Assume that $n\geqslant 7$. In particular, $G=\Alt(n)$ or $G=\Sym(n)$. Then by \cite[Theorem D]{LPS1}, interchanging $X$ and $Y$ if necessary we have that either $\Alt(n-k)\unlhd X\leqslant \Sym(k)\times \Sym(n-k)$ for some $k\leqslant 5$ and $Y$ is $k$-homogeneous on $n$ points, or one of the following holds:
\begin{enumerate}
    \item $n=8$ and $X=\mathrm{AGL}_3(2)$;
    \item $n=10$ and $X=\PSL_2(8)$ or $\PSL_2(8).3$.
\end{enumerate}
In these two exceptional cases, $X$ is clearly not the normaliser of a nontrivial cyclic subgroup. Hence $\Alt(n-k)\unlhd X\leqslant \Sym(k)\times \Sym(n-k)$ and $Y$ is $k$-homogeneous. Note that since $X$ has a nontrivial cyclic normal subgroup we must have that $k\geqslant 2$. Hence by \cite[Theorem 6]{BP}, $Y$ is primitive on $n$ points. As $\langle y\rangle \unlhd Y$, we get that the socle of the primitive group $Y$ is $\langle y\rangle$. Thus $n$ is prime and $y$ is a cycle of prime order.  Moreover, $Y\cong\mathrm{AGL}(1,n)\cap G$ is 2-homogeneous but not 3-homogeneous. Hence either $k=2$, or $n=7$ and $k=5$. In the first case we deduce that $x$ is a transposition, $G=\Sym(n)$ and $X=\Sym(2)\times \Sym(n-2)$. Thus we do obtain a factorisation. It remains to consider the case where $k=5$ and $n=7$, and we may assume that $x$ is not a transposition. Since $|G|=|X||Y|/|X\cap Y|$ we deduce that $X$ contains a 5-cycle. As $x$ is not a transposition we deduce that $x$ is a 5-cycle. However, we then have $|X|\leqslant 40$ and $|Y|\leqslant 42$, which contradicts $|X||Y|\geqslant |G|\geqslant (7!)/2$.

It is easy to verify that in Line~1 and~2 we have ${\bf C}_G(x){\bf C}_G(y)<G$ and hence there is no symbol $\surd$ in the $6^{\mathrm{th}}$ column.
\end{proof}

\section{Classical groups: linear groups}\label{sec:classicalpsl}
In this section we assume that $G$ is an almost simple group with socle $L=\mathrm{PSL}_n(q)$ with $q=r^f$ for some prime $r$.

We start our analysis with two technical lemmas which help to locate the elements $x$ and $y$ with $G={\bf N}_G(\langle x\rangle){\bf N}_G(\langle y\rangle)$. Our main reference for these lemmas is~\cite[Chapter~4]{GLS} and~\cite[Chapter~$3.1$, Table~B.1,~B.2,~B.3]{BGbook}.
\begin{lemma}\label{hup0}Let $n\geqslant 2$. Suppose $r^{fn}-1$ admits a primitive prime divisor  $t_1$. Let $g\in \mathrm{Aut}(\mathrm{PSL}_n(q))$ with $t_1$ dividing $|{\bf N}_{\mathrm{Aut}(\mathrm{PSL}_n(q))}{(\langle g\rangle)}|$ and let $T_1$ be a cyclic subgroup of order $t_1$ in 
${\bf N}_{\mathrm{Aut}(\mathrm{PSL}_n(q))}{(\langle g\rangle)}$. Then 
\[
g\in {\bf C}_{\mathrm{Aut}(\mathrm{PSL}_n(q))}(T_1)=
\begin{cases}
\langle T,\iota\rangle&\iota \textrm{ graph automorphism}, \textrm{ if  }n \textrm{ is even},\\
\langle T, \iota\rangle&\iota \textrm{ graph-field automorphism}, \textrm{ if }n \textrm{ is odd and } f\textrm{ is even},\\
T&\textrm{if }nf\textrm{ is odd},
\end{cases}
\]
where $T$ is a maximal torus of $\mathrm{PGL}_n(q)$ having order $(q^n-1)/(q-1)$, that is, $T$ is a Singer cycle. In particular, $|{\bf N}_{\mathrm{Aut}(\mathrm{PSL}_n(q))}(\langle g\rangle):{\bf C}_{\mathrm{Aut}(\mathrm{PSL}_n(q))}(g)|$ is relatively prime to $t_1$.
\end{lemma}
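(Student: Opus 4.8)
The plan is to understand the structure of $\cent{\Aut(\PSL_n(q))}{T_1}$ where $T_1$ is a cyclic group of order a primitive prime divisor $t_1$ of $q^n-1=r^{fn}-1$. The key point is that $t_1$ is a large prime (in particular $n \mid t_1-1$), and an element of order $t_1$ inside $\GL_n(q)$ (or its relevant extension) must act irreducibly on the natural module $V$, since by primitivity $t_1 \nmid r^{fi}-1$ for $i<n$, so no proper nonzero $T_1$-invariant subspace can exist over $\mathbb{F}_{q_0}$. Hence $T_1$ lies in a Singer-type maximal torus. First I would invoke Schur's lemma: the centralizer of an irreducible cyclic group in $\GL_n(\overline{\mathbb{F}_r})$ is a field extension, so $\cent{\GL_n(q)}{T_1}$ is cyclic of order $q^n-1$, and reducing to $\PGL_n(q)$ gives the Singer cycle $T$ of order $(q^n-1)/(q-1)$.

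Next I would determine which outer automorphisms centralize $T_1$. The outer automorphism group of $\PSL_n(q)$ is generated by diagonal, field, and graph automorphisms. Field automorphisms of order dividing $f$ act on eigenvalues of $T_1$ by the Frobenius $\lambda \mapsto \lambda^r$, and the graph (inverse-transpose) automorphism acts by $\lambda \mapsto \lambda^{-1}$. An automorphism centralizes $T_1$ precisely when the induced action on the set of eigenvalues of a generator is trivial. The eigenvalues of a generator of $T_1$ form a single Galois orbit of size $n$ under $\lambda\mapsto\lambda^{q_0}$, and one computes which combination of Frobenius powers and inversion fixes this orbit pointwise. This is where the three cases split: for $n$ even the map $\lambda\mapsto\lambda^{-1}$ (suitably combined) stabilizes things and produces a graph automorphism; for $n$ odd with $f$ even a graph-field automorphism survives; and for $nf$ odd no nontrivial outer automorphism centralizes $T_1$, leaving only $T$ itself. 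I would carry out this Galois-theoretic bookkeeping using the standard description of $\Aut(\PSL_n(q))$ from~\cite[Chapter~2]{GLS}, locating exactly which elements of order $2$ (graph or graph-field) commute with $T_1$.

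Finally, for the concluding ``in particular'' clause, I would argue as follows. Since $g$ normalizes $\langle g\rangle$ and $T_1 \leqslant \nor{\Aut(\PSL_n(q))}{\langle g\rangle}$, the group $\langle T_1, g\rangle$ normalizes $\langle g\rangle$; I want to show $g$ centralizes $T_1$, equivalently $g\in\cent{\Aut(\PSL_n(q))}{T_1}$. The mechanism is that $T_1$, being of prime order $t_1$ coprime to $|\Out(\PSL_n(q))|$ in the relevant sense, acts on $\langle g\rangle$ and this forces $g$ into the computed centralizer; once $g\in\cent{}{T_1}$, the index $|\nor{}{\langle g\rangle}:\cent{}{g}|$ embeds into $\Aut(\langle g\rangle)$, and since $\cent{}{T_1}$ is (an extension of) the abelian torus $T$ by an element of order $2$, the quotient $\nor{}{\langle g\rangle}/\cent{}{g}$ has order dividing $2$ and $|g|$, hence is coprime to the odd prime $t_1$. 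The main obstacle I anticipate is the precise Galois-theoretic determination of which outer automorphism of order $2$ (graph versus graph-field) centralizes the Singer torus in each parity case, and carefully tracking the effect of passing between $\GL_n(q)$, $\PGL_n(q)$, and $\Aut(\PSL_n(q))$, since the diagonal automorphisms and the center can obscure exactly which coset representatives commute with $T_1$.
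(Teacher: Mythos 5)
Your first part --- identifying $T_1$ as an irreducible (Singer-type) subgroup via the primitive prime divisor property, computing $\cent{\GL_n(q)}{T_1}$ by Schur's lemma, and doing the Galois bookkeeping to decide which graph or graph-field involutions survive --- is sound and in fact supplies detail that the paper simply cites from~\cite[Chapter~4]{GLS}. The problem is in the second half, which is where the actual content of the lemma lies.

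The hypothesis only gives $T_1\leqslant \nor{\Aut(\PSL_n(q))}{\langle g\rangle}$, not $T_1\leqslant \cent{\Aut(\PSL_n(q))}{g}$, and the step you describe as ``$T_1$ acts on $\langle g\rangle$ and this forces $g$ into the computed centralizer'' is precisely what must be proved; the coprimality of $t_1$ to $|\Out(\PSL_n(q))|$ is irrelevant here, because the conjugation action of $T_1$ on $\langle g\rangle$ lands in $\Aut(\langle g\rangle)$, a cyclic-group automorphism group of order $\varphi(|g|)$, not in $\Out(\PSL_n(q))$. The argument the paper uses (and which you are missing) is: if $T_1\nleq \cent{}{g}$, then $T_1$ embeds in $\Aut(\langle g\rangle)$, so $t_1\mid\varphi(|g|)$, whence $|g|$ has a prime divisor $p$ with $t_1\mid p-1$; replacing $g$ by $g^{|g|/p}$, the prime $p$ exceeds $t_1>fn$, so $g$ lies in $\PSL_n(q)$ and is semisimple (as $p\neq r$), and then $T_1$ would have to permute the at most $n$ eigenspaces of $g$ in an orbit of length $t_1>n$, which is absurd. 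Without this, the containment $g\in\cent{\Aut(\PSL_n(q))}{T_1}$ is unjustified. Your final clause is also wrong as stated: $\nor{\Aut(\PSL_n(q))}{\langle g\rangle}$ is not contained in $\cent{}{T_1}$ (only $g$ is), so there is no reason for $\nor{}{\langle g\rangle}/\cent{}{g}$ to have order dividing $2|g|$ --- for $g$ a generator of the Singer torus $T$ this quotient has order divisible by $n$. The correct conclusion is only that this quotient embeds in $\Aut(\langle g\rangle)$ and is coprime to $t_1$, and proving that coprimality again requires the eigenspace argument above, not the structure of $\cent{}{T_1}$.
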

\begin{proof}
Suppose first that $T_1\leqslant {\bf C}_{\mathrm{Aut}(\mathrm{PSL}_n(q))}{( g)}$. Then $g\in {\bf C}_{\mathrm{Aut}(\mathrm{PSL}_n(q))}{(T_1)}$. Let $T$ be a Singer cycle of $\mathrm{PGL}_n(q)$ containing $T_1$. Using~\cite{GLS}, we obtain the structure of ${\bf C}_{\mathrm{Aut}(\mathrm{PSL}_n(q))}(T_1)$.

It remains to consider the case that $t_1$ does not divide the order of ${\bf C}_{\mathrm{Aut}(\mathrm{PSL}_n(q))}{( g)}$. We aim to prove that this case cannot arise. As $t_1$ divides $|{\bf N}_{\mathrm{Aut}(\mathrm{PSL}_n(q))}{(\langle g\rangle)}|$ and as ${\bf N}_{\mathrm{Aut}(\mathrm{PSL}_n(q))}{(\langle g\rangle)}/{\bf C}_{\mathrm{Aut}(\mathrm{PSL}_n(q))}{(g)}$ acts faithfully as a group of automorphisms on the cyclic group $\langle g\rangle$, we deduce that $t_1$ divides $\varphi(|g|)$, where $\varphi$ is the Euler totient function. In particular, $|g|$ is divisible by a prime $p$ with $t_1\mid p-1$. Without loss of generality, replacing $g$ by $g^{|g|/p}$ if necessary, we may suppose that $|g|=p$. 
As $fn$ divides $t_1-1$, we have $fn<t_1$. As $t_1\mid p-1$ and $fn<t_1$, we deduce $g\in\mathrm{PSL}_n(q)$. Since $t_1$ is a primitive prime divisor for $r^{fn}-1$, we have $p\ne r$ and hence $g$ is semisimple. Now, as $T_1$ acts non-trivially on $\langle g\rangle$, we deduce that $T_1$ permutes non-trivially the eigenspaces of $g$. However, this is a contradiction because $t_1>n$.
\end{proof}

\begin{lemma}\label{hup}Let $n\geqslant 3$. Suppose $r^{f(n-1)}-1$ admits a primitive prime divisor $t_2$. Let $g\in \mathrm{Aut}(\mathrm{PSL}_n(q))$ with $t_2$ dividing $|{\bf N}_{\mathrm{Aut}(\mathrm{PSL}_n(q))}{(\langle g\rangle)}|$ and let $T_2$ be a cyclic subgroup of order $t_2$ in 
${\bf N}_{\mathrm{Aut}(\mathrm{PSL}_n(q))}{(\langle g\rangle)}$. Then one of the following holds:
\begin{enumerate}
\item\label{hup1}
\[
g\in {\bf C}_{\mathrm{Aut}(\mathrm{PSL}_n(q))}(T_2)=
\begin{cases}
\langle T,\iota\rangle&\iota \textrm{ graph automorphism}, \textrm{ if  }n \textrm{ is odd},\\
\langle T, \iota\rangle&\iota \textrm{ graph-field automorphism}, \textrm{ if }n \textrm{ and } f\textrm{ are even},\\
T&\textrm{if }(n-1)f \textrm{ is odd},
\end{cases}
\]
where $T$ is a maximal torus of $\mathrm{PGL}_n(q)$ having order $q^{n-1}-1$. In particular, $|{\bf N}_{\mathrm{Aut}(\mathrm{PSL}_n(q))}(\langle g\rangle):{\bf C}_{\mathrm{Aut}(\mathrm{PSL}_n(q))}(g)|$ is relatively prime to $t_2$;
\item\label{hup2}$n=t_2$ is prime, $f=1$, $g$ lies in a Singer cycle of $\mathrm{PGL}_n(q)$ and has order divisible by a primitive prime divisor $p$ of $r^{fn}-1$ with $n\mid p-1$.
\end{enumerate}
\end{lemma}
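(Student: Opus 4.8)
The plan is to follow the proof of Lemma~\ref{hup0} almost verbatim, using the dichotomy according to whether or not the cyclic group $T_2$ centralizes $g$. The one genuinely new feature is that $t_2$ is a primitive prime divisor of $r^{f(n-1)}-1$ rather than of $r^{fn}-1$, so the size estimate attached to $t_2$ is weaker and an equality is now allowed to hold; that sharp case is precisely what forces the exceptional conclusion~(2). I would first dispose of the case $T_2\leqslant\cent{\Aut(\PSL_n(q))}{g}$. Here $g\in\cent{\Aut(\PSL_n(q))}{T_2}$, and since $t_2$ is a primitive prime divisor of $r^{f(n-1)}-1$, the order-$t_2$ element lies in a maximal torus $T$ of $\PGL_n(q)$ of order $q^{n-1}-1$, namely the image in $\PGL_n(q)$ of a torus $\GL_1(q^{n-1})\times\GL_1(q)$ of $\GL_n(q)$. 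Reading the structure of $\cent{\Aut(\PSL_n(q))}{T_2}$ off~\cite{GLS} exactly as in Lemma~\ref{hup0}, but with the roles of $n$ and $n-1$ interchanged (which is why the graph automorphism now appears when $n$ is odd and the graph-field automorphism when $n$ and $f$ are both even), yields conclusion~(1) and the coprimality statement. This part is routine.

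The substance lies in the complementary case $t_2\nmid|\cent{\Aut(\PSL_n(q))}{g}|$. Since $\nor{\Aut(\PSL_n(q))}{\langle g\rangle}/\cent{\Aut(\PSL_n(q))}{g}$ acts faithfully on $\langle g\rangle$, we get $t_2\mid\varphi(|g|)$, so some prime $p\mid|g|$ satisfies $t_2\mid p-1$; replacing $g$ by a suitable power I may assume $|g|=p$. From $f(n-1)\mid t_2-1$ we obtain $p\geqslant t_2+1\geqslant f(n-1)+2$, and inequalities analogous to those in Lemma~\ref{hup0} — testing divisibility of $p$ against the diagonal part $\gcd(n,q-1)$, the field part $f$, and the graph part of order $2$ of $|\Out(\PSL_n(q))|$ — show that $p$ divides none of these, whence $g\in\PSL_n(q)$. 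Primitivity of $t_2$ forces $p\neq r$, so $g$ is semisimple.

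Now $T_2$ permutes the eigenspaces of $g$ nontrivially, producing an orbit of the prime length $t_2$; hence $g$ has at least $t_2$ distinct eigenspaces and $n\geqslant t_2\geqslant f(n-1)+1$. In contrast to Lemma~\ref{hup0}, where $n<t_1$ gave an outright contradiction, this inequality is sharp: if $f\geqslant 2$ it already forces $n\leqslant 1$, a contradiction, so $f=1$ and then $n=t_2$ is prime. Consequently $g$ has exactly $n$ one-dimensional eigenspaces forming a single $T_2$-orbit, and a generator of $T_2$ sends $g$ to $g^k$ for some $k$ of multiplicative order $n$ modulo $p$, giving $n\mid p-1$. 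This establishes all of~(2) except the assertion that $g$ lies in a Singer cycle.

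The main obstacle is exactly this last assertion, equivalently that $p$ is a primitive prime divisor of $r^{fn}-1=q^n-1$. The eigenvalue bookkeeping gives directly only that the $n$ distinct eigenvalues of $g$ form a single orbit under $\lambda\mapsto\lambda^k$; since the characteristic polynomial of $g$ is defined over $\mathbb{F}_q$, they also form a union of Frobenius orbits, all of common size $m=\ord_p(q)$ with $m\mid n$. As $n$ is prime this leaves $m=1$ or $m=n$, and the content of~(2) is that $m=n$, i.e.\ that the eigenvalues generate $\mathbb{F}_{q^n}$ and $g$ is irreducible, hence lies in a Singer cycle. The delicate point — the step I expect to require the most care — is the exclusion of $m=1$, that is, of a split regular semisimple $g$ whose eigenvalues already lie in $\mathbb{F}_q$ and are merely permuted cyclically by a monomial $T_2$, since such a configuration satisfies every property established above. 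Ruling it out should use more than the order of $g$: concretely, the interaction between the maximal torus actually containing $g$, the primitive-prime-divisor hypothesis $t_2\mid q^{n-1}-1$ (so that $q$ is a primitive root modulo $n$), and the precise structure of $\nor{\Aut(\PSL_n(q))}{\langle g\rangle}$, which is where I would concentrate the argument.
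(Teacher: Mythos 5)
Your proposal tracks the paper's own proof step for step.  In the case $T_2\leqslant\cent{\Aut(\PSL_n(q))}{g}$ the paper does exactly what you describe: it repeats the argument of Lemma~\ref{hup0} verbatim and reads the structure of $\cent{\Aut(\PSL_n(q))}{T_2}$ off~\cite{GLS}, giving conclusion~(1).  In the complementary case the paper makes the same reduction to $|g|=p$ prime with $t_2\mid p-1$, deduces $g\in\PSL_n(q)$ semisimple from $f(n-1)<t_2$, and uses the nontrivial $T_2$-action on the eigenspaces together with $t_2>n-1$ to force $n$ distinct eigenvalues, $t_2=n$ and $f=1$.  Up to that point your write-up is complete and agrees with the paper.

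The gap is exactly where you placed it: you never prove that $g$ lies in a Singer cycle, i.e.\ you never exclude the split case $m=1$ in which the $n$ distinct eigenvalues already lie in $\mathbb{F}_q$ and are permuted cyclically by a monomial element of order $n$.  Your closing sentence only indicates where you would look, so the proposal as submitted does not establish conclusion~(2).  You should know, however, that the paper does no better: its entire treatment of this step is the sentence ``as $g$ has $n$ distinct eigenvalues in a suitable extension of $\mathbb{F}_q$, we deduce that $g$ is contained in a Singer cycle'', which presupposes that the eigenvalues generate $\mathbb{F}_{q^n}$.  The split configuration you isolate genuinely occurs: take $n=t_2=3$ and $q=r=29$, so that $3$ is a primitive prime divisor of $q^2-1$, let $\zeta\in\mathbb{F}_{29}^{\ast}$ have order $p=7$ (note $7\mid q-1$ and $3\mid p-1$), and let $g$ be the image in $\PGL_3(29)$ of $\mathrm{diag}(1,\zeta,\zeta^{3})$; the permutation matrix of a $3$-cycle normalizes $\langle g\rangle$ with $g\mapsto g^{2}$, yet $g$ lies in the split torus and its order $7$ is not a primitive prime divisor of $29^{3}-1$.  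So the missing step is not merely delicate --- conclusion~(2) as stated needs either a further argument ruling such $g$ out of the intended applications or an additional branch for split $g$, which would then have to be carried through Lemmas~\ref{casePSLnpart3} and~\ref{casePSLnpart2}, where part~(2) is dismissed by assuming the relevant centralizer is a Singer torus.  Your instinct to concentrate the effort on this point was correct, but the lemma is not closed by your argument (nor, in my reading, by the paper's).
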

\begin{proof}
If $T_2\leqslant {\bf C}_{\mathrm{Aut}(\mathrm{PSL}_n(q))}{( g)}$, then the proof follows verbatim the argument in Lemma~\ref{hup0} and we obtain that part~\eqref{hup1} holds.

It remains to consider the case that $t_2$ does not divide the order of ${\bf C}_{\mathrm{Aut}(\mathrm{PSL}_n(q))}{( g)}$. We aim to prove that part~\eqref{hup2} holds. As $t_2$ divides $|{\bf N}_{\mathrm{Aut}(\mathrm{PSL}_n(q))}{(\langle g\rangle)}|$ and as ${\bf N}_{\mathrm{Aut}(\mathrm{PSL}_n(q))}{(\langle g\rangle)}/{\bf C}_{\mathrm{Aut}(\mathrm{PSL}_n(q))}{(g)}$ acts faithfully as a group of automorphisms on the cyclic group $\langle g\rangle$, we deduce that $t_2$ divides $\varphi(|g|)$. In particular, $|g|$ is divisible by a prime $p$ with $t_2\mid p-1$. Without loss of generality, we may suppose that $|g|=p$. 
As $f(n-1)$ divides $t_2-1$, we have $f(n-1)<t_2$ and hence $g\in\mathrm{PSL}_n(q)$. Now, as $T_2$ acts non-trivially on $\langle g\rangle$, we deduce that $T_2$ permutes non-trivially the eigenspaces of $g$. As $t_2>n-1$, this is possible only when $\langle g\rangle$ has $n$ distinct eigenvalues and $t_2=n$. As $t_2=n$ and $f(n-1)<t_2$, we have $f=1$. Moreover, as $g$ has $n$ distinct eigenvalues in a suitable extension of $\mathbb{F}_q$, we deduce that $g$ is contained in a Singer cycle of $\mathrm{PSL}_n(q)$ and, via the embedding of $\mathbb{F}_{q^n}^\ast$ in $\mathbb{F}_q^n\setminus\{0\}$, $g$ is a field generator.
\end{proof}

 In our proofs, we exclude those groups that are isomorphic to alternating groups, as these have already been covered in Proposition~\ref{pr3}. Thus $n\geqslant 2$ and $$(n,q)\not\in\{ (2,4), (2,5), (2,9),(4,2)\}.$$ See Section~\ref{notation}, for our notation.
 
We first deal with $2$-dimensional linear groups, because we have little room in this case for using the primitive prime divisors $t_1$ and $t_2$ in Lemmas~\ref{hup0} and~\ref{hup}.

\begin{lemma}\label{casePSL2}
 If $L=\PSL_2(q)$ and $q\notin\{4,5,9\}$, then $(G,x,y)$ is in Line~$3$,~$4$ or~$5$ of Table~$\ref{table1}$.
\end{lemma}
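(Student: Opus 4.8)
The plan is to analyze the almost simple groups $G$ with socle $L=\PSL_2(q)$ via the maximal factorizations of Liebeck--Praeger--Saxl, just as the paper's strategy dictates. First I would invoke Lemma~\ref{lem:primeorder} to reduce to the case where $x$ and $y$ have prime order, and use the fact that $X=\nor G{\langle x\rangle}$ and $Y=\nor G{\langle y\rangle}$ must be contained in core-free maximal subgroups $A$ and $B$ giving a maximal factorization $G=AB$. For $L=\PSL_2(q)$ the maximal subgroups are well understood (Dickson's classification): Borel subgroups of order roughly $q(q-1)/d$, dihedral groups of order $2(q\pm1)/d$, and the exceptional subgroups $\Alt(4)$, $\Sym(4)$, $\Alt(5)$, together with subfield and field-extension subgroups. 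Consulting Tables~1--6 and Theorem~D of~\cite{LPS1} for the two-dimensional linear case restricts the pairs $(A,B)$ drastically; the typical factorization pairs a Borel subgroup (the stabilizer of a point of the projective line, containing a Sylow $r$-subgroup) with a subgroup acting transitively on the projective line.

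The heart of the argument is then to identify which elements $x,y$ can have normalizers equal to these factors. The natural candidates are as follows: the unipotent case, where $x$ has order $r$ (the defining characteristic) so that $|x|=q$ when $q$ is prime, or more precisely $x$ is a transvection whose normalizer is the Borel subgroup; and on the other side $y$ is an element lying in a torus of order $(q+1)/d$ or $(q-1)/d$, i.e.\ a semisimple element having no eigenspace of dimension $1$ over $\mathbb{F}_q$ (an irreducible element), whose normalizer is dihedral and transitive on the projective line. I would match these against the statements of Lines~3,~4, and~5 of Table~\ref{table1}: Line~3 is $\PGL_2(r)$ with $|x|=r$ and $y$ having no $1$-dimensional eigenspace; Line~4 is the analogous factorization inside $\PSL_2(r)$ under the congruence $r\equiv3\pmod4$ (the congruence controlling when the relevant torus normalizer survives the passage from $\PGL$ to $\PSL$); and Line~5 is the genuinely exceptional factorization $\PGammaL_2(16)=N_XN_Y$ with $x$ a field automorphism of order $2$ and $|y|=17$, which arises from the sporadic factorization $\PSL_2(16).2$ involving $\PSL_2(16)$'s subgroup structure together with the $17$-element.

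For each surviving candidate $(A,B)$ I would verify the factorization is realized by actual normalizers by checking the key identity noted in the introduction, namely $\nor G{\langle x\rangle}=\nor A{\langle x\rangle}$ and $\nor G{\langle y\rangle}=\nor B{\langle y\rangle}$, and then confirming $|X|\,|Y|\geqslant|G|$ through an order count, using that $|X\cap Y|$ is forced to be small (typically $1$ or $2$) because a unipotent-normalizer and a torus-normalizer intersect in a bounded subgroup. The genuinely delicate points, and where I expect the main obstacle, are twofold: first, the $\PSL_2$ versus $\PGL_2$ bookkeeping, where one must track carefully whether the dihedral torus-normalizer is transitive on the projective line inside $L$ itself or only after adjoining diagonal automorphisms---this is exactly what forces the hypothesis $r\equiv3\pmod4$ in Line~4 and the requirement $|x|=r$ (equivalently $q=r$, so $f=1$) rather than a general prime power; second, disposing of the many small exceptional configurations ($\Alt(4)$, $\Sym(4)$, $\Alt(5)$ subgroups, subfield subgroups, and the sporadic factorizations in Theorem~D) and confirming that none of them except the $q=16$ case yields a normalizer factorization. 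These small cases I would handle by the direct computational test furnished by Lemma~\ref{le1}, checking the orders $|\nor G{\langle g\rangle}|$ against $\sqrt{|G|}$ for the finitely many relevant $q$, exactly as illustrated for $M_{11}$ in the remark following that lemma.
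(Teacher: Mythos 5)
Your identification of the examples matches the paper's: the generic factorization pairs the normalizer of a unipotent $r$-element (a Borel subgroup when $f=1$) with the normalizer of a nonsplit-torus element (a dihedral group transitive on the projective line); the congruence $r\equiv 3\pmod 4$ in Line~4 arises exactly as you say, since an involution inverting the torus fixes a point of the projective line iff $-1$ is a square, which is what makes $|X\cap Y|$ equal to $2$ rather than $1$ inside $\PSL_2(r)$; and $q=16$ is isolated and confirmed by computer. Methodologically, though, the paper does not route this case through the Liebeck--Praeger--Saxl tables at all: for $f=1$ it argues directly that only $r$-elements have normalizer of order divisible by $r$, applies a Frattini argument to force $y$ into the torus of order $r+1$, and finishes with an exact order count; for $f>1$ it takes a Zsigmondy prime $s\mid r^{2f}-1$ to force one factor to be a torus normalizer of order $2(q+1)f$, after which the complementary factor must have order divisible by $q(q-1)/2$, which only a field automorphism of order $2$ with $q\in\{2^4,2^8\}$ achieves (and $q=2^8$ then fails the exact count). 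Note also that the paper does not reduce to prime order here: Lines~3 and~4 list $y$ of arbitrary order in the nonsplit torus, so after invoking Lemma~\ref{lem:primeorder} you would still owe an argument recovering the composite-order pairs.

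The genuine flaw in your plan is the claim that the $\PSL_2$-versus-$\PGL_2$ bookkeeping is what forces $|x|=r$, equivalently $f=1$. That issue is orthogonal. What kills the Borel-type factor for $f>1$ is that the Borel subgroup is then not the normalizer of any nontrivial cyclic subgroup: for $x$ unipotent of order $r$, the split torus permutes the $(q-1)/(r-1)$ subgroups of order $r$ in the unipotent radical transitively, so $|\nor{G}{\langle x\rangle}|$ is at most about $q(r-1)f$, which falls short of the required $|G:B|=q(q-1)/2$ as soon as $f\geqslant 2$ and $q\notin\{4,9\}$; and no non-unipotent element has its full normalizer inside a point stabilizer. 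Your generic order count would detect this, but your write-up files everything outside $f=1$ under ``finitely many small exceptional configurations to be checked by the test of Lemma~\ref{le1},'' which is not the situation: $q=r^f$ with $f\geqslant 2$ is an infinite family and needs a uniform argument, such as the paper's Zsigmondy-prime one or the normalizer estimate just given. Once that is supplied, your plan closes.
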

\begin{proof}
  Assume that $f=1$. Here $G=\PSL_2(r)$ or $G=\PGL_2(r)$. Now, replacing $X$ by $Y$ if necessary, we may assume that $r$ divides $|X|$. The only elements $x$ of $G$ having normalizer of order divisible by $r$ are the $r$-elements. Thus $|x|=r$, $X$ is a Borel subgroup of $G$ and 
 \[|X|=
 \begin{cases}
 \frac{(r-1)r}{2}&\textrm{if }G=\PSL_2(r),\\
 (r-1)r&\textrm{if } G=\PGL_2(r).
 \end{cases}
 \] As $G=XY$ and $X$ fixes a $1$-dimensional subspace of $V$, we see from the Frattini argument that $Y$ acts transitively on the $1$-dimensional subspaces of $V$. With a quick look at the structure of the conjugacy classes of $G$, we obtain that $y\in T\setminus\{1\}$, with $T$ a maximal torus of $\PGL_2(r)$ of order $r+1$.  In particular,  
 \[|Y|=|\nor G {\langle y\rangle}|=
 \begin{cases}
 r+1&\textrm{if }G=\PSL_2(r),\\
 2(r+1)&\textrm{if }G=\PGL_2(r).
 \end{cases}
\] If $G=\PGL_2(r)$, we have $$\frac{|X||Y|}{|X\cap Y|}=\frac{((r-1)r)(2(r+1))}{2}=|G|$$ and
hence $G=XY$: these examples are in Line~3 of Table~\ref{table1}.  If $G=\PSL_2(r)$, then $Y$ is a dihedral group with $|X\cap Y|=1$ if $r\equiv 3\pmod 4$ and with $|X\cap Y|=2$ if $r\equiv 1\mod 4$. In particular, $G=XY$ only when $r\equiv 3\pmod 4$: these examples are in Line~4 of Table~\ref{table1}.

Assume $f>1$. If $q=8$, then an inspection in~\cite{ATLAS} shows that there are no non-identity group elements $x$ and $y$ with $G={\bf N}_G(\langle x\rangle){\bf N}_G(\langle y\rangle)$. Suppose $q\neq 8$.
Let $s$ be a primitive prime divisor of $r^{2f}-1$ (such a prime exists by Zsigmondy's theorem~\cite{Zs} because we are excluding the case $q=8$).  Since $G=XY$, we see that either $X$ or $Y$ has order divisible by $s$. Replacing $X$ by $Y$ if necessary, we may assume that $s$ divides $|X|$. Using the subgroup structure of $\PGammaL_2(q)$ (see~\cite{BHRDbook}), we see that the only elements $x$  having normalizer of order divisible by $s$ are the elements lying in  a maximal torus $T$ of $\PGL_2(q)$ of order $q+1$. Now, for every $x\in T\setminus\{1\}$, we have $|\nor {\PGammaL_2(q)}{\langle x\rangle}|=2(q+1)f$ and
$$\mathrm{P}\Gamma\mathrm{L}_2(q)=\nor {\PGammaL_2(q)}{\langle x\rangle}\mathrm{PSL}_2(q).$$ Since $G=XY$, we get $$\mathrm{P}\Gamma\mathrm{L}_2(q)=\nor{\mathrm{P}\Gamma\mathrm{L}_2(q)}{\langle x\rangle}G=
\nor{\mathrm{P}\Gamma\mathrm{L}_2(q)}{\langle x\rangle}Y$$ and so we may assume that $G=\PGammaL_2(q)$. Thus $|X|=2(q+1)f$. As $|G||X\cap Y|=|X||Y|$, we obtain that $|G|/|X|=(q-1)q/2$ divides $|Y|$. Another inspection on the maximal subgroups of $\PGammaL_2(q)$ (for $q\neq 4,9$) shows that $\nor {\PGammaL_2(q)}{\langle y\rangle}$ is divisible by $(q-1)q/2 $ only when $q\in \{2^4,2^8\}$ and $y$ is a field automorphism of order $2$. If $L=\PSL_2(2^8)$ and $y$ is a field automorphism of order $2$, then $Y\cong \PGL_2(2^4).8$ and $|X\cap Y|=2$. However, $$\frac{|X||Y|}{|X\cap Y|}=\frac{2(2^8+1)8\cdot (2^8-1)2^7}{2}=\frac{|G|}{2}$$
and hence we have no examples when $q=2^8$. A computation with \textsc{Magma} shows that  the only factorization arising with $L=\PSL_2(16)$ is in Line~5 of Table~\ref{table1}.

It is easy to verify that in Line~3,~4 and~5 we have ${\bf C}_G(x){\bf C}_G(y)<G$ and hence there is no symbol $\surd$ in the $6^{\mathrm{th}}$ column (Line~5 can also be verified with an easy computer computation).
\end{proof}

Next we deal with  linear groups, where the primitive prime divisor $t_2$ in Lemma~\ref{hup} does not exist.

\begin{lemma}\label{casePSLnpart1}
 If $L=\PSL_3(q)$ and $q=r=2^\ell-1$, for some $\ell\in \mathbb{N}$, then $G\ne {\bf N}_G(\langle x\rangle){\bf N}_G(\langle y\rangle)$.
\end{lemma}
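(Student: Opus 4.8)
The plan is to use the one piece of Zsigmondy input that survives in the Mersenne case. Although $q^2-1$ has no primitive prime divisor when $q=2^\ell-1$ is prime (this is exactly the exception excluded from Lemma~\ref{hup}, so I cannot constrain an element through a divisor $t_2$), the number $q^3-1$ still has a primitive prime divisor $t_1$, since $(3,q)\neq(6,2)$. Assume for a contradiction that $G={\bf N}_G(\langle x\rangle){\bf N}_G(\langle y\rangle)$; by Lemma~\ref{lem:primeorder} I may take $x,y$ of prime order, and since $t_1$ divides $|G|=|X||Y|/|X\cap Y|$, after interchanging $x$ and $y$ if necessary I may assume $t_1\mid|X|$.

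First I would pin down $x$. Here $n=3$ and $f=1$, so $nf$ is odd and Lemma~\ref{hup0} places $x$ in a Singer cycle $T\leqslant\PGL_3(q)$ of order $q^2+q+1$. Every non-identity element of $T$ is regular semisimple, since an element of $\mathbb{F}_{q^3}^*\setminus\mathbb{F}_q^*$ has three distinct Galois conjugates in $\mathbb{F}_{q^3}$; hence ${\bf C}_{\Aut(L)}(x)$ has $T$ as a characteristic cyclic subgroup of index at most $2$, and therefore $X={\bf N}_G(\langle x\rangle)\leqslant{\bf N}_{\Aut(L)}(T)$. As $|{\bf N}_{\Aut(L)}(T)|=6(q^2+q+1)$, I obtain the crucial bound $|X|\leqslant 6(q^2+q+1)$.

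Next I would locate $Y$. From $|G|=|X||Y|/|X\cap Y|$ and the bound on $|X|$, any maximal subgroup $B$ of $G$ containing $Y$ (supplied by the factorization classification of~\cite{LPS1}) satisfies $|G:B|\leqslant|X|\leqslant 6(q^2+q+1)$; among the maximal subgroups of $G$ only the point- and hyperplane-stabilizers, of index $q^2+q+1$, are this small, every other maximal subgroup having strictly larger index. Replacing $x,y$ by their images under a graph automorphism if necessary, I may assume $Y\leqslant P_1$, the stabilizer of a $1$-space. Then $G=XY\leqslant XP_1$ forces $G=XP_1$, and Lemma~\ref{l: smaller} gives $P_1=(X\cap P_1)Y$. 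Because $T$ is regular on the $q^2+q+1$ points we have $T\cap P_1=1$, so $X\cap P_1$ embeds into ${\bf N}_{\Aut(L)}(T)/T\cong C_6$; thus $|X\cap P_1|\leqslant 6$ and hence $[P_1:Y]\leqslant 6$.

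The final and main step is to show that a cyclic normalizer cannot have index at most $6$ in $P_1$. Write $P_1=U{:}L_0$ with unipotent radical $U\cong\mathbb{F}_q^2$ and Levi factor $L_0$ containing $\SL_2(q)$, recalling that $U$ is the natural, hence irreducible and nontrivial, $\SL_2(q)$-module. Since $[U:U\cap Y]$ is a power of $r=q$ and divides $[P_1:Y]\leqslant 6<q$, I get $U\leqslant Y$; and since any subgroup of $\GL_2(q)$ of index at most $6$ contains $\SL_2(q)$ (as $\PSL_2(q)$ has no proper subgroup of index $\leqslant 6$ for $q\geqslant 7$), the image of $Y$ in $L_0$ contains $\SL_2(q)$. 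Hence the perfect group $U{:}\SL_2(q)$ lies in $Y={\bf N}_{P_1}(\langle y\rangle)$ and so centralizes the cyclic group $\langle y\rangle$; as ${\bf C}_{P_1}(U)=U$ and ${\bf C}_U(\SL_2(q))=0$, this forces $y=1$, the desired contradiction. The only place where $q\geqslant 7$ is used is this last paragraph, so the single remaining value $q=3$, that is $L=\PSL_3(3)$, is settled by the direct computation of Lemma~\ref{le1}. The main obstacle is precisely this closing structural step: turning the coarse index bound $6$ into a contradiction by playing it against the module structure of the parabolic, together with the bookkeeping (via the index comparison and \cite{LPS1}) ensuring that no maximal subgroup other than a parabolic can house $Y$.
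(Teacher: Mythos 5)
Your proof is correct, and after the common opening it diverges from the paper's argument in an interesting way. Both proofs begin identically: a primitive prime divisor $t_1$ of $q^3-1$ together with Lemma~\ref{hup0} forces $x$ into a Singer torus $T$ and gives $|X|\leqslant 6(q^2+q+1)$. From there the paper argues arithmetically: since $|X|$ is coprime to $r$, the factor $Y$ must contain a full Sylow $r$-subgroup, whence $y$ is a transvection and $|Y|$ divides $2(r-1)^2r^3$; comparing $|X||Y|$ with $|L|$ then yields that $r+1=2^\ell$ divides $4$, and this final divisibility step is the only place the Mersenne hypothesis is used. You instead argue structurally: the index bound $|G:B|\leqslant |X|$ pins $Y$ inside a parabolic with $[P_1:Y]\leqslant 6$ (using $T\cap P_1=1$ and Lemma~\ref{l: smaller}), and the module structure of $P_1=U{:}L_0$ forces the perfect group $U{:}\SL_2(q)$ into ${\bf C}_G(y)$, giving $y=1$ outright. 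Notably, your closing step never invokes $q=2^\ell-1$: your argument proves the stronger statement that for any prime $q\geqslant 7$ there is no factorization with $x$ regular semisimple in a Singer torus, which is consistent with Table~\ref{table1} (no $\PSL_3$ entries) and overlaps with what Lemmas~\ref{casePSLnpart3} and~\ref{casePSLnpart2} establish by other means. The only points worth making fully explicit are (i) the verification that every core-free maximal subgroup of $G$ other than $P_1,P_2$ has index exceeding $6(q^2+q+1)$ for $q\geqslant 7$ (and that no such subgroup exists at all when $G$ contains a graph automorphism, so $G\leqslant\PGL_3(q)$), and (ii) the identification ${\bf C}_{P_1}(U)=U$ in $\PGL_3(q)$; both are routine. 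The trade-off is that your route leans on the subgroup structure of $\PSL_3(q)$ where the paper leans on a one-line order count, but your version isolates more clearly why no example can exist.
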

\begin{proof}
When $(n,q)=(3,3)$, the proof follows with a computer computation with the computer algebra system \textsc{Magma}. Lemma~\ref{le1} makes the search of factorizations $G=\nor G{\langle x\rangle}\nor G{\langle y\rangle}$ very efficient. In particular, for the rest of the argument, we suppose $q\ne 3.$

 Observe that $\ell>2$, because we are excluding the case $(n,q)=(3,3)$. Let $t_1$ be a primitive prime divisor of $r^3-1$ and let $T_1$ be a cyclic subgroup of $G$ of order $t_1$. As $t_1$ divides $|L|$, we have that $t_1$ divides $|X|$ or $|Y|$. Without loss of generality, we suppose that $t_1$ divides $|X|$ and hence $t_1$ divides $|\nor G{\langle x\rangle}|$. From Lemma~\ref{hup0}, replacing $T_1$ with a suitable conjugate, we have $T_1\leqslant {\bf C}_G(x)$. Therefore  $x\in \cent G{T_1}$. Let $T_x$ be the maximal torus of $\PGL_3(r)$ containing $T_1$. In particular, $T_x$ is a torus of order $(q^3-1)/(q-1)$. From Lemma~\ref{hup0}, we deduce $$\cent {\Aut(L)}{T_1}=T_x.$$ Thus $x\in T_x$ and hence $x$  is a semisimple element of order dividing $r^2+r+1$. Using this fact, we deduce that $\nor G{\langle x\rangle}$ has order a divisor of $6(r^2+r+1)$. Since this number is relatively prime to $r$, we deduce that $Y$ contains a Sylow $r$-subgroup $R$ of $G$. Thus $R\leqslant Y=\nor G{\langle y\rangle}$ and $R=\nor R{\langle y\rangle}$. A moment's thought gives that $y\in \Z R$ and hence $y$ is a transvection of $K$. Thus $|\nor {\PGL_3(r)}{\langle y\rangle}|=(r-1)^2r^3$ and hence $|Y|$ divides $2(r-1)^2r^3$. Therefore $|X||Y|$ divides $$
 6(r^2+r+1)
 \cdot 2(r-1)^2r^3=12r^3(r^3-1)(r-1)$$ and so does $|G|$, because $G=XY$. As $$|G|=|G:L||L|\geqslant \frac{r^3(r^3-1)(r^2-1)}{\gcd(3,r-1)},$$ we deduce $r+1=2^\ell$ divides $4$ and hence $\ell\leqslant 2$, contradicting the fact that $\ell>2$. 
\end{proof}

Next we deal with the exceptional case arising in part~\eqref{hup2}  of  Lemma~\ref{hup}.

\begin{lemma}\label{casePSLnpart3}
 If $L=\PSL_n(q)$, $q=r$, $n$ is a primitive prime divisor of $q^{n-1}-1$, $n\geqslant 3$ and ${\bf C}_{\mathrm{PSL}_n(q)}(x)$ or ${\bf C}_{\mathrm{PSL}_n(q)}(y)$  is a maximal torus of order $(q^n-1)/(q-1)$, then $G\ne {\bf N}_G(\langle x\rangle){\bf N}_G(\langle y\rangle)$.
\end{lemma}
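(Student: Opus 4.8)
The plan is to reproduce the order estimate of Lemma~\ref{casePSLnpart1}: the normalizer of a Singer torus is far too small to be one factor of $G$ unless the complementary factor is enormous, and the only elements with a normalizer that large turn out to be transvections, which a direct count then rules out. This is exactly the configuration produced by Lemma~\ref{hup}\eqref{hup2}, where $f=1$ and $n$ is a primitive prime divisor of $q^{n-1}-1$.

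First I would fix the configuration. Interchanging $x$ and $y$ if necessary, assume $\cent L x=T_x$, the maximal torus of order $(q^n-1)/(q-1)$; then $x$ is regular semisimple with irreducible characteristic polynomial. Since any $g\in G$ normalizing $\langle x\rangle$ carries $x$ to a generator of $\langle x\rangle$ and hence fixes $\cent L x=T_x$, we get $X=\nor G{\langle x\rangle}\leqslant \nor G{T_x}$. As $f=1$, the normalizer of a Singer torus in $\Aut(L)$ lies in $T_x.(n\times 2)$, where the factor $n$ comes from $\mathrm{Gal}(\mathbb F_{q^n}/\mathbb F_q)$ and the $2$ from the graph automorphism; hence $|X|$ divides $2n(q^n-1)/(q-1)$. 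In particular $|X|$ is coprime to $r$ when $r$ is odd, while $|X|_r\leqslant 2$ when $q=r=2$ (since $(q^n-1)/(q-1)$ and $n$ are both odd). Here I would also record that $\gcd(n,q-1)=1$, because $n$ primitive for $q^{n-1}-1$ forces $n\nmid q-1$, so $|G:L|\in\{1,2\}$ and a Sylow $r$-subgroup of $G$ for odd $r$ lies in $L$.

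Next I would pin down $y$, assuming $r$ odd first. Then $r\nmid|X|$, so $G=XY$ forces $Y$ to contain a full Sylow $r$-subgroup $R$ of $G$, which we may take inside $L$; moreover by Lemma~\ref{lem:primeorder} we may assume $|y|$ is prime. Now $R$ normalizes $\langle y\rangle$, hence normalizes $\cent L y$. If $y$ were semisimple, then $\cent R y=R\cap\cent L y$ would be a Sylow $r$-subgroup of the proper reductive group $\cent L y$, normal in $R$, with $R/\cent R y$ embedding in the cyclic group $\Aut(\langle y\rangle)$; since $R/R'$ is elementary abelian this gives $|\cent R y|\geqslant |R|/r=q^{n(n-1)/2-1}$. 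But the largest $r$-part of the centralizer of a non-central semisimple element is $q^{(n-1)(n-2)/2}$ (attained by the split type $\GL_1\times\GL_{n-1}$), and $n(n-1)/2-1-(n-1)(n-2)/2=n-2>0$ for $n\geqslant 3$, a contradiction. Hence $y$ is unipotent, so $y\in R$, and as $R/\cent R y$ embeds in the $r'$-group $\Aut(\langle y\rangle)$ we get $y\in\Z R$; for $q=r$ prime the centre $\Z R$ is the long-root subgroup, so $y$ is a transvection.

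Finally I would close with orders. For a transvection $y$ we have $|y|=r$ and $[\nor G{\langle y\rangle}:\cent G y]\mid r-1$, so $|Y|$ divides $(q-1)\,|G:L|\,q^{2n-3}|\GL_{n-2}(q)|$; combining with $|X|\mid 2n(q^n-1)/(q-1)$ and $|G|\geqslant|L|$, a direct computation gives $|X|\,|Y|<|G|$, contradicting $G=XY$ (as $|X||Y|=|G||X\cap Y|\geqslant|G|$). The hard part is the tightness of this last inequality for the smallest parameters: both the transvection and the split semisimple element of type $\GL_1\times\GL_{n-1}$ have centralizer of order $\approx q^{(n-1)^2}$, so the estimate has little slack when $q$ and $n$ are small, and the even‑characteristic case $q=r=2$ is genuinely delicate because there $Y$ only captures a subgroup of index at most $2$ in a Sylow $2$-subgroup. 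I expect to treat $q=2$ by re-running the count with the transvection as the extremal element (one checks that even for, say, $\PSL_5(2).2$ the product $|X||Y|$ stays below $|G|$), and to dispose of the finitely many remaining borderline $(n,q)$ by a direct \textsc{Magma} computation via Lemma~\ref{le1}, exactly as in Lemmas~\ref{casePSL2} and~\ref{casePSLnpart1}.
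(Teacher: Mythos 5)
Your route is genuinely different from the paper's. The paper disposes of this case in a few lines by leaning on the classification of maximal factorizations in [LPS1]: since $X\leqslant \nor G{T_x}$ with $T_x$ a Singer torus, the only admissible complementary maximal subgroup is a parabolic $P\in\{P_1,P_{n-1}\}$, the index computation $|P:Y|\mid n$ together with the fact that $P$ has no subgroup of prime index $n$ forces $Y=P$, and a parabolic normalizes no non\nobreakdash-trivial cyclic subgroup. You instead give an essentially self-contained argument in the spirit of Lemma~\ref{casePSLnpart1}: coprimality of $|X|$ to $r$ forces $Y$ to contain a Sylow $r$-subgroup $R$, the structure of $R/R'$ plus the bound $|\cent{\GL_n(q)}{s}|_r\leqslant q^{(n-1)(n-2)/2}$ for non-central semisimple $s$ rules out $y$ semisimple, so $y\in\Z R$ is unipotent, and an order count finishes. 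The exponents do cancel exactly as you expect, leaving the clean inequality $q^{n-1}-1\leqslant 2n(q-1)|G:L|$ (so $|X||Y|<|G|$ whenever this fails). This buys independence from [LPS1] at the price of more case analysis; it is a legitimate alternative and, as you note, the same bound works for any unipotent $y$ of order $r$, so you do not even need to pin $y$ down to a transvection.

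Two concrete gaps. First, your dichotomy ``semisimple or unipotent'' for the prime-order element $y$ omits the possibility $y\notin L$, i.e.\ $y$ an involution in the graph coset (recall $\Out(L)$ here is generated by the graph automorphism, since $\gcd(n,q-1)=1$ kills the diagonal part and $f=1$ kills the field part). This is easily repaired: for $n$ an odd prime, $\cent Ly$ is then of type $\SO_n(q)$ or $\Sp_{n-1}(q)$, with $r$-part $q^{(n-1)^2/4}<q^{n(n-1)/2-1}$, so the same centralizer-order contradiction applies; but as written the case is missing. Second, and more seriously, your closing inequality does not eliminate $(n,q)=(3,2)$ or $(3,5)$, and $(3,2)$ cannot be ``disposed of by a \textsc{Magma} computation'': the triple $L=\PSL_3(2)$, $x$ a Singer element of order $7$ with $\cent Lx$ the torus of order $7$, $y$ of order $4$, gives $|\nor L{\langle x\rangle}|\cdot|\nor L{\langle y\rangle}|=21\cdot 8=168=|L|$ with trivial intersection, so $L=\nor L{\langle x\rangle}\nor L{\langle y\rangle}$ genuinely holds --- this is Line~4 of Table~\ref{table1} viewed through $\PSL_3(2)\cong\PSL_2(7)$. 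The lemma is only true once $(n,q)=(3,2)$ is excluded, which the paper does via the convention of Section~\ref{notationtables} and the explicit exclusion in Lemma~\ref{casePSLnpart2} (indeed the paper's own step ``$P$ has no subgroups having prime index $n$'' also fails for $\Sym(4)$ and $n=3$). You need to state that exclusion; for $(3,5)$, where your bound gives exact equality, a short extra argument (e.g.\ $Y$ contains a full Sylow $2$-subgroup of $G$ while $X$ contains an involution, so $X$ cannot act regularly on $G/Y$) or a computation does close the case.
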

\begin{proof}
Assume by contradiction that $G=XY$.
Here, $|\mathrm{Aut}(L):L|=2$. Without loss of generality, we may suppose that ${\bf C}_{\mathrm{PSL}_n(q)}(x)$  is a maximal torus of order $(q^n-1)/(q-1)$. Thus $|X|=n(q^n-1)/(q-1)$ when $G=L$, and $|X|=2n(q^n-1)/(q-1)$ when $G>L$. In both cases, $|G:X|=|L|/(n(q^n-1)/(q-1))$.

By consulting the maximal factorizations of almost simple groups with socle $\mathrm{PSL}_n(q)$ in~\cite{LPS1}, we deduce that $Y\leqslant P$, where $P\in \{P_1,P_{n-1}\}$ and $P_1,P_{n-1}$ are maximal parabolic subgroups. As $G=XY$, we deduce that $|Y|$ is divisible by $|G:X|=|L|/(n(q^n-1)/(q-1))$. As $|P|=|L|/((q^n-1)/(q-1))$ we have that $|P:Y|$ divides $n$. As $P$ has no subgroups having prime index $n$, we get $Y=P$. However, when $n\geqslant 3$, $P$ normalizes no cyclic non-identity subgroup.
\end{proof}

Finally, we deal with the general case.
\begin{lemma}\label{casePSLnpart2}
 If $L=\PSL_n(q)$, $n\geqslant 3$ and $(n,q)\ne (3,2)$, then $(G,x,y)$ is in Line~$6$ or~$7$ of Table~$\ref{table1}$.
\end{lemma}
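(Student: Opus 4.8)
The plan is to run the same primitive-prime-divisor machinery of Lemmas~\ref{hup0} and~\ref{hup} on both factors, exploiting the fact that for $n\geqslant 3$ (and $(n,q)\neq(3,2)$) both $t_1$, a primitive prime divisor of $r^{fn}-1$, and $t_2$, a primitive prime divisor of $r^{f(n-1)}-1$, exist by Zsigmondy's theorem (the cases where $t_2$ fails, namely $n$ a primitive prime divisor with $q=r$, and the small exceptions, having been dispatched in Lemmas~\ref{casePSLnpart1} and~\ref{casePSLnpart3}, and $(3,2)$ being excluded). Since $G=XY$, each of the primes $t_1,t_2$ must divide $|X|$ or $|Y|$. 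First I would show that $t_1$ and $t_2$ cannot both divide the same factor: by Lemma~\ref{hup0} an element whose normalizer has order divisible by $t_1$ lies (up to conjugacy) in a Singer cycle $T$ of order $(q^n-1)/(q-1)$, whereas by Lemma~\ref{hup} an element whose normalizer has order divisible by $t_2$ lies in a torus of order $q^{n-1}-1$; these two tori have incompatible orders (a $t_1$-element has no eigenvalue in $\mathbb{F}_q$ and acts irreducibly, a $t_2$-element fixes a $1$-space and acts irreducibly on a complementary $(n-1)$-space), so a single cyclic subgroup cannot simultaneously be normalized by both. Hence, relabelling $x\leftrightarrow y$ if necessary, $t_1\mid |X|$ and $t_2\mid |Y|$.

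Next I would pin down the structure of $X$ and $Y$ using the consequences of the two lemmas. From $t_1\mid|X|$ and Lemma~\ref{hup0}, $x$ centralizes a $T_1$ of order $t_1$, so $x\in\cent{\Aut L}{T_1}$, which is $T$, $\langle T,\iota\rangle$ with $\iota$ a graph automorphism ($n$ even), or $\langle T,\iota\rangle$ with $\iota$ a graph-field automorphism ($n$ odd, $f$ even). Since $X$ must be a genuine factor of a factorization, I would consult the maximal factorizations of $\PSL_n(q)$ in~\cite{LPS1}: the factor containing $X$ must be one whose order is divisible by $t_1$, which forces it into the family where $A\cap L$ is of type $\GU_n(q^{1/2})$, or $\Sp_n(q)$, or $\mathrm{SO}_n^{\pm}(q)$ (the $\mathcal{C}_5$/$\mathcal{C}_8$ geometric families whose orders pick up the Singer prime), the relevant possibility being the centralizer of a graph (or graph-field) automorphism. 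Symmetrically, from $t_2\mid|Y|$ and Lemma~\ref{hup} part~\eqref{hup1}, $y$ lies in a torus of order $q^{n-1}-1$ and hence has an $(n-1)$-dimensional eigenspace; the factor containing $Y$ must be a maximal parabolic $P_1$ or $P_{n-1}$, the stabilizer of a point or hyperplane. The hard part is matching these two descriptions against the actual entries of~\cite[Tables~1--4]{LPS1} and extracting exactly the arithmetic conditions recorded in Lines~$6$ and~$7$ of Table~\ref{table1}: that $x$ is a graph automorphism of order $2$ with $\cent{\PGL_n(q)}{x}\cong\mathrm{PGSp}_n(q)$ and $n$ even (Line~$6$), or the sporadic $q=4$ configuration with $|x|=5$ having no eigenvalue in $\mathbb{F}_q$ and $|y|=3$ (Line~$7$).

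The decisive computation is then an order comparison. Having $X\leqslant A$ and $Y\leqslant P$ with $P$ parabolic, I would use $|G|=|X||Y|/|X\cap Y|$ together with the Frattini-type reduction $G=XY\Rightarrow |G:X|\mid |Y|$ to force $Y$ to be all of $P$ (up to the graph-automorphism bookkeeping), exactly as in Lemma~\ref{casePSLnpart3}, and then verify that $P$ does normalize the required cyclic subgroup $\langle y\rangle$ — which it does precisely when $y$ is a semisimple element of order dividing $q-1$ with an $(n-1)$-dimensional eigenspace, since such a $y$ is centralized by the Levi factor $\GL_1(q)\times\GL_{n-1}(q)$ of $P$. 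For Line~$6$ I must further check that $G$ genuinely contains a graph automorphism and that the induced factorization $G=\nor G{\langle x\rangle}\nor G{\langle y\rangle}$ is realized, which amounts to confirming $|\cent{\PGL_n(q)}{x}\cap L|=|\mathrm{PGSp}_n(q)\cap L|$ has the right $2$-part and that $\cent G x\cent G y=G$, accounting for the $\surd$ in the sixth column. The main obstacle I anticipate is the bookkeeping for graph and graph-field automorphisms: one must track carefully how $x$ lying in $\langle T,\iota\rangle$ rather than $T$ changes $\cent G x$ from a torus normalizer to $\mathrm{PGSp}_n(q)$ (respectively a subfield-type centralizer when $q=4$), and verify that the outer part $|G:L|$ is large enough to supply the graph automorphism $\iota$ while still permitting the order equality $|X||Y|=|G||X\cap Y|$; the anomalous $q=4$ case of Line~$7$, where $t_1=5$ forces a small-field coincidence that escapes the generic graph-automorphism analysis, will require separate verification (via the detailed class data in~\cite{BGbook} or a direct \textsc{Magma} check).
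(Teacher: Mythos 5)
Your overall strategy (locate $x$ and $y$ via the primitive prime divisors $t_1,t_2$ and Lemmas~\ref{hup0} and~\ref{hup}, then compare orders against the maximal factorizations of Liebeck--Praeger--Saxl) is the paper's strategy, but two of your concrete steps fail. The decisive one is in your ``decisive computation'': you propose to use $|G:X|\mid|Y|$ to force $Y$ to be the whole parabolic $P$ and then to ``verify that $P$ normalizes the required cyclic subgroup $\langle y\rangle$''. A full maximal parabolic of $\PSL_n(q)$ with $n\geqslant 3$ normalizes \emph{no} non-identity cyclic subgroup --- its unipotent radical does not normalize $\langle y\rangle$ for any semisimple $y$, and being centralized by the Levi factor is not enough; the paper uses exactly this fact to kill the configuration of Lemma~\ref{casePSLnpart3}. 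In the genuine example of Line~6, $Y=\nor G{\langle y\rangle}$ is of type $\GL_{n-1}(q)$, a \emph{proper} subgroup of $P_1$, and the factorization closes because $X\geqslant\mathrm{PGSp}_n(q)$ is large; the forcing ``$|G:X|$ divides $|Y|$, hence $Y=P$'' only works when $X$ is a small torus normalizer as in Lemma~\ref{casePSLnpart3}. Relatedly, ``$y$ lies in a torus of order $q^{n-1}-1$ and hence has an $(n-1)$-dimensional eigenspace'' is a non sequitur: a generic element of that torus acts irreducibly on an $(n-1)$-space and has only a $1$-dimensional eigenspace. Pinning $y$ into the centre of a $\GL_{n-1}(q)$ (the condition $\kappa=1$ in the paper) is the content of a genuine Sylow-$r$ order comparison that your sketch does not supply.

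Second, you have mislocated Line~7. Your list of possible overgroups of $X$ ($\GU_n(q^{1/2})$, $\Sp_n(q)$, $\mathrm{SO}_n^{\pm}(q)$) omits the $\mathcal{C}_3$ field-extension subgroups $\GL_{n/\ell}(q^\ell).\ell$, which also contain the Singer prime $t_1$ and which arise exactly in the branch $x\in T_x$ (as opposed to $x\in\langle T_x,\iota\rangle\setminus T_x$). That branch is not a ``small-field coincidence with $t_1=5$'' to be delegated to \textsc{Magma}: it is an infinite family in $n$ (with $q=4$, $\ell=2$, $|x|=5$ and $X$ of type $\GL_{n/2}(16).2$), detected only by the inequality $|X|_r|Y|_r\geqslant|G|_r|X\cap Y|_r$ having solutions precisely for $q\in\{2,4\}$, followed by the elimination of $q=2$ and a refinement at $q=16$; note also that $t_1\neq 5$, since the element of order $5$ is centralized by, not equal to, the $t_1$-element. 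Two smaller gaps: your up-front separation ``$t_1$ and $t_2$ cannot divide the same factor'' is argued only for elements lying in the tori and must be checked directly when $x$ is a graph or graph-field automorphism (e.g.\ that $t_2\nmid|\Sp_n(q)|$); and you never treat $y\in\langle T_y,\iota\rangle\setminus T_y$ or elements of composite order ($|y|\in\{4,6\}$, $|x|=10$), which is necessary because the lemma constrains $x$ and $y$ themselves and not merely prime-order powers of them.
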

\begin{proof}
When $n=3$ and $q=r=2^\ell-1$, for some $\ell\in \mathbb{N}$, the proof follows from Lemma~\ref{casePSLnpart1}. Similarly, when $q=r$, $n$ is a primitive prime divisor of $q^{n-1}-1$ and ${\bf C}_L(x)$ or ${\bf C}_L(y)$ is a maximal torus of order $(q^n-1)/(q-1)$, the proof follows from Lemma~\ref{casePSLnpart3}. Therefore, we exclude these cases from the rest of the proof of this lemma. 

When $(n,q)\in\{(3,4),(3,8),(4,4),(6,2),(7,2)\}$, the proof follows with a computer computation with the computer algebra system \textsc{Magma}. Lemma~\ref{le1} makes the search of factorizations $G=\nor G{\langle x\rangle}\nor G{\langle y\rangle}$ very efficient. In particular, for the rest of the argument, we suppose 
$$(n,q)\notin\{(3,4),(3,8),(4,4),(6,2),(7,2)\}.$$

For the rest of our argument, the primitive prime divisors $t_1$ and $t_2$ in Lemmas~\ref{hup0} and~\ref{hup} exist and moreover, part~\eqref{hup2} in Lemma~\ref{hup} does not arise.

Without loss of generality, we may suppose that $t_1$ divides $|X|$. Let $T_1$ be a cyclic subgroup of $X$ having order $t_1$ and set 
$$C_1:={\bf C}_{\mathrm{Aut}(L)}(T_1).$$
From Lemma~\ref{hup0}, we have
\begin{equation}\label{eq:C1}
x\in C_1=
\begin{cases}
\langle T_x,\iota\rangle&\iota \textrm{ graph automorphism of }L, \textrm{ if  }n \textrm{ is even},\\
\langle T_x, \iota\rangle&\iota \textrm{ graph-field automorphism of }L, \textrm{ if }n \textrm{ is odd and } f\textrm{ is even},\\
T_x&\textrm{if }nf\textrm{ is odd},
\end{cases}
\end{equation}
where $T_x$ is a maximal torus of $\mathrm{PGL}_n(q)$ having order $(q^n-1)/(q-1)$.
We now divide the rest of the argument in three cases, depending on whether $x\in T_x$ and on whether $n$ is even.

\subsection{Assume $x\in T_x$.}\label{subsection1} Thus $x$ is a semisimple element and $X$ is a field extension subgroup of $G$. Thus, using the information in~\cite[Table.B1,~B.2,~B.3]{BGbook}, we deduce that the order of $X$ divides 
\begin{equation}\label{eq:c19}\frac{|G:G\cap\mathrm{PGL}_n(q)|\ell}{q-1}|\mathrm{GL}_{n/\ell}(q^\ell)|,
\end{equation}
for some divisor $\ell$ of $n$ with $\ell>1$. 

We now turn our attention to $t_2$. From~\eqref{eq:c19}, we see that $t_2$ is relatively prime to $|X|$ and hence $t_2$ divides $|Y|$. 
Let $T_2$ be a cyclic subgroup of $Y$ having order $t_2$ and set 
$$C_2:={\bf C}_{\mathrm{Aut}(L)}(T_2).$$
From Lemma~\ref{hup}, we have
\begin{equation}\label{eq:C2}
y\in C_2=
\begin{cases}
\langle T_y,\iota\rangle&\iota \textrm{ graph automorphism of }L, \textrm{ if  }n \textrm{ is odd},\\
\langle T_y, \iota\rangle&\iota \textrm{ graph-field automorphism of }L, \textrm{ if }n \textrm{ and } f\textrm{ are even},\\
T_y&\textrm{if }(n-1)f \textrm{ is odd},
\end{cases}
\end{equation}
where $T_y$ is a maximal torus of $\mathrm{PGL}_n(q)$ having order $q^{n-1}-1$.

Assume $y\in T_y$. Thus the order of $Y$ divides 
\begin{equation}\label{eq:c191}|G:G\cap\mathrm{PGL}_n(q)|\kappa|\mathrm{GL}_{(n-1)/\kappa}(q^\kappa)|,
\end{equation}
for some divisor $\kappa$ of $n-1$. Assume first $\kappa>1$. Then, using~\eqref{eq:c19} and~\eqref{eq:c191}, we have
\begin{align*}
q^{\frac{n(n-1)}{2}}|G:G\cap\mathrm{PGL}_n(q)|_r&=&& |G|_r\leqslant |X|_r|Y|_r\\
&\leqslant&& |G:G\cap\mathrm{PGL}_n(q)|_r^2(\ell \kappa)_rq^{\frac{n}{2}\left(\frac{n}{\ell}-1\right)}q^{\frac{(n-1)}{2}\left(\frac{(n-1)}{\kappa}-1\right)}.
\end{align*}
Therefore
$$q^{\frac{n(n-1)}{2}}\leqslant |G:G\cap\mathrm{PGL}_n(q)|_r(\ell \kappa)_rq^{\frac{n}{2}\left(\frac{n}{\ell}-1\right)}q^{\frac{(n-1)}{2}\left(\frac{(n-1)}{\kappa}-1\right)}.$$
A computation yields that this inequality is satisfied only when $(n,q)=(3,2)$. The case $(n,q)=(3,2)$ is of no concern to us here, because we are excluding this case from the statement. Assume next $\kappa=1$. When $\kappa=1$, using the explicit structure of ${\bf C}_L(x)$ and ${\bf C}_L(y)$, we deduce $|X\cap Y|_r\geqslant |\mathrm{GL}_{n/\ell-1}(q^\ell)|_r$. Therefore, 
using~\eqref{eq:c19} and~\eqref{eq:c191}, we have
\begin{align*}
q^{\frac{n(n-1)}{2}}|G:G\cap\mathrm{PGL}_n(q)|_r&=&& |G|_r= \frac{|X|_r|Y|_r}{|X\cap Y|_r}\\
&\leqslant&& |G:G\cap\mathrm{PGL}_n(q)|_r^2\ell_rq^{\frac{n}{2}\left(\frac{n}{\ell}-1\right)}q^{\frac{(n-1)(n-2)}{2}}q^{-\frac{(n-\ell)}{2}\left(\frac{n}{\ell}-2\right)}.
\end{align*}
Therefore
$$q^{\frac{n(n-1)}{2}}\leqslant |G:G\cap\mathrm{PGL}_n(q)|_r\ell_rq^{\frac{n}{2}\left(\frac{n}{\ell}-1\right)}q^{\frac{(n-1)(n-2)}{2}}q^{-\frac{(n-\ell)}{2}\left(\frac{n}{\ell}-2\right)}.$$
A computation yields that this inequality is satisfied only when $q\in \{2,4\}$ and $\ell=2$.   The case $q=2$ is impossible because $\GL_{n-1}(2)$ is centerless, but $y\in {\bf Z}(\mathrm{GL}_{n-1}(q))$. When $q=4$, we deduce that $y$ is a semisimple element having an eigenspace of dimension $n-1$ and that $x$ is a semisimple element of order $5$ with no $1$-dimensional eigenspaces on $V$. In the case $(q,\ell)=(4,2)$, by refining the computation above comparing $|G|_2$ with $|X|_2|Y|_2/|X\cap Y|_2$, we deduce $G\nleq \langle\PGL_n(4),\tau\rangle$, where $\tau$ is the inverse-transpose graph automorphism. Thus we obtain the examples in Line~7 of Table~\ref{table1}, with the extra remark concerning $G$ in the fifth column.

Assume $y\notin T_y$. Suppose also that, for the time being, $y$ is an involution. Using~\cite[Table~B.1,~B.2,~B.3]{BGbook} (or~\cite[Chapter~4]{GLS}), we see that all involutions in $C_2\setminus T_y$ are $\Aut(L)$-conjugate to $\iota$. 
Thus $y$ is $\Aut(L)$-conjugate to a graph automorphism when $n$ is odd and $\iota$ is $\Aut(K)$-conjugate to a graph-field automorphism when $n$ and $f$ are even. Thus the order of $Y$ divides 
\begin{eqnarray}\label{eq:c192}
2f(q-1)|\mathrm{Sp}_{n-1}(q)|,&&\textrm{when }n \textrm{ is odd},\\\nonumber
2f(q-1)|\mathrm{GU}_{n}(q^{1/2})|,&&\textrm{when }n, f\textrm{ are even}.
\end{eqnarray}
Using~\eqref{eq:c19} and~\eqref{eq:c192}, we see with a computation similar (but simpler) to the one above that $|X|_r|Y|_r<|G|_r$ and hence this case does not arise. We give details only in the case that $Y$ is of type $\mathrm{Sp}_{n-1}(q)$. We have
\begin{align*}
|G|_r&=|XY|_r\leqslant |X|_r|Y|_r=(\ell|G:G\cap\mathrm{PGL}_n(q)|)_rq^{\frac{n}{2}\left(\frac{n}{\ell}-1\right)}\cdot (2f)_rq^{\frac{(n-1)^2}{4}}.
\end{align*}
Using $|G|_r=|G:G\cap\mathrm{PGL}_n(q)|_rq^{\frac{n(n-1)}{2}}$ and simplifying the expression above, we deduce
\begin{align*}
q^{\frac{n^2}{4}+\frac{n}{2}-\frac{1}{4}-\frac{n^2}{2\ell}}\leqslant (2\ell f)_r.
\end{align*}
Since $\ell>1$ and $n$ is odd, we have $n^2/2\ell\geqslant n^2/6$ and hence
$$q^{\frac{n^2}{12}+\frac{n}{2}-\frac{1}{4}}\leqslant (2\ell f)_r,$$
which is never satisfied.
 Suppose now that $y$ is not an involution. As $y\in C_2\setminus T_y=\langle T_y,\iota\rangle\setminus T_y$, we have $y^2\in T_y$, $y^2\ne 1$ and $G=X\nor G{\langle y^2\rangle}$. Therefore, we may apply the argument in the first part of the proof to the triple $(G,x,y^2)$ and we deduce that $(G,x,y^2)$ is in Line~7 of Table~\ref{table1}. Thus $n$ is even, $q=4$, $|x|=5$, $|y^2|=3$, $y^2$ is a semisimple element with an $(n-1)$-dimensional eigenspace and $x$ is a semisimple element with no eigenvalues in $\mathbb{F}_q$. Now, $|y|=6$ and $\cent L y\cong \SU_{n-1}(2)$. Then an easy computation, comparing $|G|_2$ with $|XY|_2$, yields $G\ne XY$; therefore, there are no further examples in this case.

\subsection{Assume $x\notin T_x$ and that $n$ is odd.}\label{subsection2} Then, from~\eqref{eq:C1}, $f$ is even. Suppose also that, for the time being, $x$ has order $2$.  Using again the information in~\cite[Section~$3.1$]{BGbook} (or in~\cite[Chapter~4]{GLS}), we see that $x$ is $\Aut(L)$-conjugate to $\iota$. Therefore  $x$ is $\Aut(L)$-conjugate to a graph-field automorphism. Hence  $${\bf }O^{r'}(X)\cong\mathrm{PSU}_n(q^{1/2}).$$ An inspection on the maximal factorizations in~\cite{LPS1,LPS2} reveals that there are no factorizations $G=XY$ with ${\bf }O^{r'}(X)\cong\mathrm{PSU}_n(q^{1/2})$. (This analysis could be omitted by comparing the size of a Sylow $r$-subgroup of $X$, $Y$ and $G$ in a fashion similar to the computations above.) Suppose now that $x$ is not an involution. Then $x^2\in T_x$ and $x^2\ne 1$. Therefore, we may apply \S~\ref{subsection1}  to the triple $(G,x^2,y)$ and we deduce that $G\ne \nor G{\langle x^2\rangle}\nor G y$, because except for $(n,q)=(3,2)$ there are no factorizations when $n$ is odd and the case $(n,q)=(3,2)$ does not occur here as we require $f$ even.

\subsection{Assume $x\notin T_x$, $|x|=2$ and $n$ is even.}\label{subsection3} Using~\cite[Section~$3.1$]{BGbook} (or~\cite[Chapter~4]{GLS}), we infer that, when $q$ is even, $C_1\setminus T_x$ contains a unique $\Aut(L)$-conjugacy class of involutions and, when $q$ is odd,  $C_1\setminus T_x$ contains two  $\Aut(L)$-conjugacy classes of involutions.  Then, using the information in~\cite[Section~$3.1$ and Table~B.3]{BGbook}, we obtain
\begin{equation}\label{double}
{\bf O}^{2'}(\cent{\mathrm{PGL}_n(q)}x)\cong
\begin{cases}
\mathrm{PSp}_n(q),&\textrm{or},\\
\mathrm{P}\Omega_{n}^-(q),&q \textrm{ odd}.
\end{cases}
\end{equation}
Then $t_2$ does not divide $|{\bf N}_G(\langle x\rangle)|$ and so $t_2$ divides $|Y|$. Thus $y$ is as in (\ref{eq:C2}).

Assume $y\in T_y$. We claim that,  using~\eqref{eq:c191}, the second possibility for $\cent Gx$ in~\eqref{double} does not give rise to any factorization $G=XY$. Indeed, we have
\begin{align*}
|G|_r&\leqslant |X|_r|Y|_r\le(|G:\mathrm{PGL}_n(q)|)_rq^{\frac{n(n-2)}{4}}\cdot |G:G\cap\mathrm{PGL}_n(q)|_r\kappa_rq^{\frac{(n-1)}{2}\left(\frac{n-1}{\kappa}-1\right)}.
\end{align*}
When $\kappa>1$, rearranging the terms and using the fact that $(n-1)/\kappa\leqslant (n-1)/3$,  we deduce 
$$q^{\frac{n^2}{12}+\frac{5n}{6}-\frac{2}{3}}\leqslant (|G:G\cap\mathrm{PGL}_n(q)|\kappa)_r,$$
which is impossible. When $\kappa=1$, a similar computation taking in account that $|X\cap Y|_r\geqslant |X\cap Y\cap L|_r\geqslant |\Omega_{n-1}(q)|_r=q^{(n-1)^2/4}$ yields another contradiction.
Thus ${\bf O}^{2'}(\cent{\mathrm{PGL}_n(q)}x)\cong\mathrm{PSp}_n(q)$.
 Using the formula for $|\PSp_n(q)|$, we obtain
$$q^{\frac{n(n-1)}{2}}\leqslant |G|_r\leqslant |X|_r|Y|_r\leqslant (2f)_r^2q^{\frac{n^2}{4}}q^{\frac{n-1}{2\kappa}\left(\frac{n}{\kappa}-1\right)}.$$
This inequality is satisfied only when $\kappa=1$. Thus $y$ is a semisimple element having an eigenspace of dimension $n-1$. The examples arising in this case are in Line~6 of Table~\ref{table1}. 

Assume $y\notin T_y$ and $y$ is an involution. In particular, from~\eqref{eq:C2}, as $n$ is even, $f$ is also even and $y$ is $\Aut(L)$-conjugate to a graph-field automorphism. Therefore ${\bf O}^{r'}(\cent Ly)\cong \mathrm{PSU}_{n-1}(q^{1/2})$. Using the formulae for $|\PSU_{n-1}(q^{1/2})|$ and for $|\PSp_n(q)|$ (or $\mathrm{P}\Omega_{n}^-(q)$, depending on the $\Aut(L)$-conjugacy class of $x$) and taking in account whether $r$ is odd or $r=2$, we see with a computation that $|X|_r|Y|_r<|G|_r$. Therefore this case does not arise. 

Assume $y\notin T_y$ and $y$ is not an involution. As $y\in C_2\setminus T_y=\langle T_y,\iota\rangle\setminus T_y$, we have $y^2\in T_y$, $y^2\ne 1$ and $G=X\nor G{\langle y^2\rangle}$. Therefore, we may apply the argument in the first part to the triple $(G,x,y^2)$ and we deduce that $(G,x,y^2)$ is in Line~6 of Table~\ref{table1}. Thus $|y^2|$ divides $q-1$ and $y^2$ is a semisimple element with an $(n-1)$-dimensional eigenspace. Assume that $|y|$ is divisible by some odd prime $p$. Replacing $y$ by $y^{|y|/2p}$ if necessary, we may suppose that $|y|=2p$. Now, $y^p\in C_2\setminus T_y=\langle T_y,\iota\rangle\setminus T_y$, $y^p$ is an involution and $G=X\nor G{\langle y^p\rangle}$; however, we have shown in the previous paragraph that this is impossible. Therefore, $y$ has order a power of $2$ and hence, replacing $y$ by $y^{|y|/4}$ if necessary, we may suppose that $|y|=4$. In particular, $q$ is odd, because when $q$ is even $T_y$ has odd order. To conclude the rest of our analysis we identify $T_y$ with the multiplicative group of $\mathbb{F}_{q^{n-1}}$. (In particular, under this identification, we refer to an element of $\mathbb{F}_{q^{n-1}}^\ast$ as an element of $\mathrm{PGL}_n(q)$.) Let $\lambda$ be a generator of $\mathbb{F}_{q^{n-1}}^\ast$. Then $y=\lambda^\ell \iota$, where $\ell$ is a divisor of $q^{n-1}-1$ and $\iota$ is a graph-field automorphism. Now,
$$y^2=(\lambda^\ell \iota)^2=\lambda^\ell(\lambda^{\ell})^\iota=\lambda^\ell\lambda^{-\ell q^{1/2}}=\lambda^{\ell(1-q^{1/2})}.$$
As $y^2$ has order $2$, we deduce $\ell(1-q^{1/2})=\kappa (q^{n-1}-1)/2$, for some $\kappa\in\mathbb{N}$. Thus $$\ell=\kappa \frac{q^{n-1}-1}{2(1-q^{1/2})}.$$
This shows that $\lambda^\ell$ has order a divisor of $2(q^{1/2}-1)$ and hence $\lambda^\ell\in \mathbb{F}_q^\ast$. Since all elements of $\mathrm{PGL}_n(q)$ in $\mathbb{F}_q^\ast\subseteq \mathbb{F}_{q^{n-1}}^\ast$  have the same centralizer, we have
\begin{align*}
{\bf C}_{\mathrm{PGL}_n(q)}(y^2)&=
{\bf C}_{\mathrm{PGL}_n(q)}(\lambda^\ell)
\end{align*}
and hence
\begin{align*}
{\bf C}_{\mathrm{PGL}_n(q)}(y)&=
{\bf C}_{\mathrm{PGL}_n(q)}(\langle \lambda^\ell ,\iota\rangle).
\end{align*}
Now, ${\bf C}_{\mathrm{PGL}_n(q)}(\lambda^\ell)\cong \mathrm{GL}_{n-1}(q)$, from which we deduce that $${\bf C}_{\mathrm{PGL}_n(q)}(y)\cong {\bf C}_{\mathrm{GL}_{n-1}(q)}(\iota)\cong \mathrm{GU}_{n-1}(q^{1/2}).$$
Using this explicit description of ${\bf C}_{\mathrm{PGL}_n(q)}(y)$ it is not hard to verify that $|X|_r|Y|_r<|G|_r$.

\subsection{Assume $x\notin T_x$, $|x|>2$ and $n$ is even.}\label{subsection4}Here $x^2\in T_x$ and $x^2\ne 1$. Applying \S~\ref{subsection1} to the triple $(G,x^2,y)$, we deduce that $(G,x^2,y)$ is in Line~7 of Table~\ref{table1}. Thus $n$ is even, $q=4$, $|x^2|=5$, $|y|=3$, $x^5$ has no eigenvalue in $\mathbb{F}_q$ and $y$ has an $(n-1)$-dimensional eigenspace on $V$. Thus $|x|=10$ and $\cent L x\cong \GU_{n/2}(4)$. However, it is not hard to see that $|G|_2\ne |X|_2|Y|_2/|X\cap Y|_2$. Therefore, no further example arises.

\smallskip Using the explicit description of ${\bf N}_G(\langle x\rangle )$, ${\bf N}_G(\langle y\rangle)$ in Line~6 and~7, it is readily seen that $G={\bf C}_G(x){\bf C}_G(y)$ when $(G,x,y)$ is in Line~6 and  ${\bf C}_G(x){\bf C}_G(y)<G$ when $(G,x,y)$ is in Line~7.  Thus we have the $\surd$ symbol in Line~6, whereas $\surd$ is omitted in Line~7.
\end{proof}

\section{Classical groups: unitary groups}\label{sec:classicalpsu}

In this section we assume that $G$ is an almost simple group with socle $L=\mathrm{PSU}_n(q).$ Exactly as in Section~\ref{sec:classicalpsl}, we start with three technical lemmas which help to locate the elements $x$ and $y$ with $G={\bf N}_G(\langle x\rangle){\bf N}_G(\langle y\rangle)$.

\begin{lemma}\label{hup0PSU}Let $n$ be even. Suppose $r^{2f(n-1)}-1$ admits a primitive prime divisor $t_2$. Let $g\in \mathrm{Aut}(\mathrm{PSU}_n(q))$ with $t_2$ dividing $|{\bf N}_{\mathrm{Aut}(\mathrm{PSU}_n(q))}{(\langle g\rangle)}|$ and let $T_2$ be a cyclic subgroup of order $t_2$ in ${\bf N}_{\mathrm{Aut}(\mathrm{PSU}_n(q))}{(\langle g\rangle)}$. Then 
\[
g\in {\bf C}_{\mathrm{Aut}(\mathrm{PSU}_n(q))}(T_2)= 
T,
\]
where $T$ is a maximal torus of $\mathrm{PGU}_n(q)$ having order $q^{n-1}+1$. In particular, $|\mathrm{Aut}(\mathrm{PSU}_n(q)):{\bf C}_{\mathrm{Aut}(\mathrm{PSU}_n(q))}(g)|$ is relatively prime to $t_2$.
\end{lemma}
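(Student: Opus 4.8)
The plan is to follow the template of Lemmas~\ref{hup0} and~\ref{hup}, splitting into the two cases $T_2\leqslant {\bf C}_{\Aut(\PSU_n(q))}(g)$ and $t_2\nmid |{\bf C}_{\Aut(\PSU_n(q))}(g)|$. First I would record the arithmetic of $t_2$: since $t_2$ is a primitive prime divisor of $r^{2f(n-1)}-1=(q^2)^{n-1}-1$, the multiplicative order of $r$ modulo $t_2$ is $2f(n-1)$, hence the order of $q$ is $2(n-1)$ and that of $q^2$ is $n-1$. Consequently $t_2\mid q^{n-1}+1$ but $t_2\nmid q^{n-1}-1$ and $t_2\nmid q+1$; moreover $n-1$ is odd (as $n$ is even), so $\GU_{n-1}(q)$ contains an irreducible cyclic Singer torus of order $q^{n-1}+1$, and $T_2$ lies inside it. This identifies the torus $T$ in the statement as the image in $\PGU_n(q)$ of $\GU_1(q^{n-1})\times\GU_1(q)$ acting on an orthogonal decomposition $V=V_{n-1}\perp V_1$.

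In the main case $g\in {\bf C}_{\Aut(\PSU_n(q))}(T_2)$, so I would compute this centralizer exactly. A generator of the order-$t_2$ part of $T_2$ has eigenvalues forming a single Galois orbit of size $n-1$ on $V_{n-1}$, so by Schur's lemma its centralizer in $\GL_{n-1}(q^2)$ is $\mathbb{F}_{q^{2(n-1)}}^\ast$; intersecting with the unitary group yields $\GU_1(q^{n-1})$, and on the $1$-space one gets $\GU_1(q)$. A short determinant/order argument using $\gcd(t_2,q+1)=1$ shows no element of $\PGU_n(q)$ centralizes $T_2$ modulo scalars without centralizing it, so ${\bf C}_{\PGU_n(q)}(T_2)=T$. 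To exclude outer contributions I would note that a field automorphism $x\mapsto x^{r^j}$ centralizing $T_2$ forces $r^j\equiv 1\pmod{t_2}$, i.e.\ $2f(n-1)\mid j$; as the field group has order $2f<2f(n-1)$, only the identity survives, giving ${\bf C}_{\Aut(\PSU_n(q))}(T_2)=T$ as claimed.

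In the remaining case I would derive a contradiction verbatim as in Lemma~\ref{hup0}. Faithfulness of the action of ${\bf N}_{\Aut(\PSU_n(q))}(\langle g\rangle)/{\bf C}_{\Aut(\PSU_n(q))}(g)$ on $\langle g\rangle$ gives $t_2\mid \varphi(|g|)$, so after replacing $g$ by a power we may assume $|g|=p$ is prime with $t_2\mid p-1$, whence $p>t_2>2f(n-1)$. Since $p>2f=|\Aut(\PSU_n(q)):\PGU_n(q)|$ and $p>2(n-1)\geqslant n\geqslant \gcd(n,q+1)$, the element $g$ lies in $\PSU_n(q)$; primitivity gives $p\neq r$, so $g$ is semisimple. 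As $t_2\nmid|{\bf C}(g)|$, the group $T_2$ permutes the at most $n$ eigenspaces of $g$ nontrivially, forcing $t_2\leqslant n$; but $t_2\geqslant 2f(n-1)+1\geqslant 2n-1>n$, a contradiction. Finally, the ``in particular'' clause follows because $g\in T$ with $T$ abelian, and $T$ contains a full Sylow $t_2$-subgroup of $\Aut(\PSU_n(q))$ (the $t_2$-part of the order is concentrated in the factor $q^{n-1}+1$), so $T\leqslant {\bf C}(g)$ has index prime to $t_2$.

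The hard part will be pinning down ${\bf C}_{\Aut(\PSU_n(q))}(T_2)=T$: one must establish the torus order $q^{n-1}+1$ (using that $n-1$ is odd so the relevant $\GU_1(q^{n-1})$ is cyclic of that order), handle the passage to $\PGU_n(q)$ modulo scalars, and verify that neither diagonal nor field automorphisms enlarge the centralizer. This rests on the conjugacy-class and centralizer data for unitary groups in~\cite[Chapter~4]{GLS} and~\cite[Tables~B.1,~B.2,~B.3]{BGbook}, exactly the references used for the linear analogues.
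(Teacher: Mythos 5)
Your proposal follows the paper's proof essentially step for step: the same two-case split according to whether $T_2$ centralizes $g$, the same identification of ${\bf C}_{\mathrm{Aut}(\mathrm{PSU}_n(q))}(T_2)$ with the cyclic torus of order $q^{n-1}+1$ coming from the decomposition $V=V_{n-1}\perp V_1$ (where the paper simply cites \cite[Chapter~4]{GLS} and \cite{BGbook}, you sketch the Schur's-lemma computation), and the same $\varphi(|g|)$/eigenspace-permutation contradiction, with the coprimality clause deduced from $g\in T$ exactly as in the paper. The only minor caveat is that your exclusion of outer automorphisms treats pure field automorphisms rather than arbitrary elements of the outer cosets, but like the paper you defer that structural fact to the cited references, so this is not a substantive divergence.
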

\begin{proof}
Let $T$ be a maximal torus of $\mathrm{PGU}_n(q)$ containing $T_2$. Observe that $T_2$ in its action on $V=\mathbb{F}_{q^2}^n$ fixes a $1$-dimensional non-degenerate subspace and acts irreducibly on its complement. Thus $|T|=q^{n-1}+1$. Using the information in~\cite[Chapter~4]{GLS} and~\cite[Chapter~$3.2$, Table~B.1,~B.2,~B.3]{BGbook} together with the fact that $n$ is even, we obtain
\[{\bf C}_{\mathrm{Aut}(\mathrm{PSU}_n(q))}(T_2)=T.
\]
If $T_2\leqslant {\bf C}_{\mathrm{Aut}(\mathrm{PSU}_n(q))}{( g)}$, then $g\in {\bf C}_{\mathrm{Aut}(\mathrm{PSU}_n(q))}{(T_2)}=T$ and hence $|\mathrm{Aut}(\mathrm{PSU}_n(q)):{\bf C}_{\mathrm{Aut}(\mathrm{PSU}_n(q))}(g)|$ is relatively prime to $t_2$, because so is $|\mathrm{Aut}(\mathrm{PSU}_n(q)):T|$.  

It remains to consider the case that $t_2$ does not divide the order of ${\bf C}_{\mathrm{Aut}(\mathrm{PSU}_n(q))}{( g)}$. We aim to prove that this case cannot arise. As $t_2$ divides ${\bf N}_{\mathrm{Aut}(\mathrm{PSU}_n(q))}{(\langle g\rangle)}$ and as ${\bf N}_{\mathrm{Aut}(\mathrm{PSU}_n(q))}{(\langle g\rangle)}/{\bf C}_{\mathrm{Aut}(\mathrm{PSU}_n(q))}{(g)}$ acts faithfully as a group of automorphisms on the cyclic group $\langle g\rangle$, we deduce that $t_2$ divides $\varphi(|g|)$, where $\varphi$ is the Euler's totient function. In particular, $|g|$ is divisible by a prime $p$ with $t_2\mid p-1$. Without loss of generality, we may suppose that $|g|=p$. 
As $2f(n-1)$ divides $t_2-1$, we have $2f(n-1)<t_2$ and hence $g\in\mathrm{PSU}_n(q)$. Now, as $T_2$ acts non-trivially on $\langle g\rangle$, we deduce that $T_2$ permutes non-trivially the eigenspaces of $g$. However, this is a contradiction because $t_2>2f(n-1)\geqslant n$.
\end{proof}

\begin{lemma}\label{hupPSU}Let $n$ be a positive integer with $n/2$ even, let $t_1$ be a primitive prime divisor of $r^{fn}-1$, let $g\in \mathrm{Aut}(\mathrm{PSU}_n(q))$ with $t_1$ dividing $|{\bf N}_{\mathrm{Aut}(\mathrm{PSU}_n(q))}{(\langle g\rangle)}|$ and let $T_1$ be a cyclic subgroup of order $t_1$ in ${\bf N}_{\mathrm{Aut}(\mathrm{PSU}_n(q))}{(\langle g\rangle)}$. Then
\[
g\in {\bf C}_{\mathrm{Aut}(\mathrm{PSU}_n(q))}(T_1)=
\langle T,\iota\rangle,
\] 
where $T$ is a maximal torus of $\mathrm{PGU}_n(q)$ having order $(q^{n}-1)/(q+1)$, $\iota\in \mathrm{Aut}(\mathrm{PSU}_n(q))$, $|\iota|=2$ and ${\bf C}_{\mathrm{PGU}_n(q)}(\iota)\cong \mathrm{PGSp}_n(q)$. In particular, $|{\bf N}_{\mathrm{Aut}(\mathrm{PSU}_n(q))}{(\langle g\rangle)}:{\bf C}_{\mathrm{Aut}(\mathrm{PSU}_n(q))}(g)|$ is relatively prime to $t_1$.
\end{lemma}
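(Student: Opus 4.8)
The plan is to follow the same strategy used in Lemma~\ref{hup0PSU}, adapting it to the torus of order $(q^n-1)/(q+1)$ and keeping track of the subtlety that this torus centralizer is strictly larger than $T$ because of the symplectic-type involution $\iota$. First I would let $T$ be a maximal torus of $\mathrm{PGU}_n(q)$ containing $T_1$. The arithmetic role of $t_1$ as a primitive prime divisor of $r^{fn}-1$ forces $T_1$ to act on $V=\mathbb{F}_{q^2}^n$ without nontrivial invariant proper nondegenerate subspaces, so $T_1$ must lie in (and hence determine) a torus acting irreducibly on $V$; this pins down $|T|=(q^n-1)/(q+1)$. Appealing to the centralizer data in~\cite[Chapter~4]{GLS} and~\cite[Chapter~3.2, Table~B.1,~B.2,~B.3]{BGbook}, together with the hypothesis that $n/2$ is even, I would read off that ${\bf C}_{\mathrm{Aut}(\mathrm{PSU}_n(q))}(T_1)=\langle T,\iota\rangle$, where $\iota$ is the graph-type involution whose centralizer in $\mathrm{PGU}_n(q)$ is $\mathrm{PGSp}_n(q)$. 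The condition $n/2$ even is exactly what guarantees that the symplectic form is available and that this $\iota$ genuinely commutes with $T$.

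Having identified $C:={\bf C}_{\mathrm{Aut}(\mathrm{PSU}_n(q))}(T_1)=\langle T,\iota\rangle$, the proof splits according to whether $T_1\leqslant {\bf C}_{\mathrm{Aut}(\mathrm{PSU}_n(q))}(g)$. In the first case $g$ commutes with $T_1$, so $g\in C$, which is the asserted conclusion; the ``in particular'' clause then follows since $t_1$ divides $|\langle T,\iota\rangle|$ to the first power only (it divides $|T|$ and not the index-two part contributed by $\iota$, nor $\varphi(t_1)$), so $t_1\nmid |{\bf N}:{\bf C}|$. The heart of the argument is the second case, which I would rule out by contradiction exactly as in Lemma~\ref{hup0PSU}: since ${\bf N}_{\mathrm{Aut}(\mathrm{PSU}_n(q))}(\langle g\rangle)/{\bf C}_{\mathrm{Aut}(\mathrm{PSU}_n(q))}(g)$ embeds into $\Aut(\langle g\rangle)$ and $t_1$ divides the order of the former but not the latter, $t_1$ divides $\varphi(|g|)$, forcing a prime $p\mid |g|$ with $t_1\mid p-1$; replacing $g$ by a power I reduce to $|g|=p$.

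The main obstacle — and it is the step I would carry out most carefully — is the order estimate that produces the contradiction. Because $t_1$ is primitive for $r^{fn}-1$, one has $fn\mid t_1-1$, hence $fn<t_1\leqslant p$, which forces $g\in\mathrm{PSU}_n(q)$ and, as $p\neq r$, makes $g$ semisimple. Then $T_1$, acting nontrivially on $\langle g\rangle$, must permute the eigenspaces of $g$ nontrivially, giving a nontrivial homomorphism from the cyclic group of order $t_1$ into a symmetric group on at most $n$ eigenspaces; this needs $t_1\leqslant n$, contradicting $t_1>fn\geqslant n$. The only point requiring care relative to the linear case is bookkeeping over $\mathbb{F}_{q^2}$ rather than $\mathbb{F}_q$: the eigenspaces live in a degree-$n$ extension and one must confirm the number of distinct Galois-orbits of eigenvalues that $T_1$ can permute is still bounded by $n$, so that the same numerical clash $t_1>n$ applies. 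I expect this to go through verbatim, so the proof will be short.
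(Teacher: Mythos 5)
Your proof is correct and follows essentially the same route as the paper, which itself states that the argument runs verbatim as in Lemmas~\ref{hup0}, \ref{hup} and~\ref{hup0PSU} together with a table lookup for the structure of ${\bf C}_{\mathrm{Aut}(\mathrm{PSU}_n(q))}(T_1)$. One small correction to your bookkeeping: the hypothesis that $n/2$ is even is used to guarantee that $t_1$ divides $q^i-(-1)^i$ only for $i=n$ (equivalently, that an order-$t_1$ element of the torus of order $q^n-1$ has $n$ distinct eigenvalues, so that its centralizer really is $\langle T,\iota\rangle$), not to make the symplectic form ``available''; note also that this torus is not irreducible on $V=\mathbb{F}_{q^2}^n$ --- it preserves a pair of complementary totally isotropic $(n/2)$-spaces --- but, as you observe, it preserves no proper non-degenerate subspace, which is all the argument needs.
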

\begin{proof}
The hypothesis $n/2$ even is used to guarantee that $t_1$ divides $q^i-(-1)^i$, only when $i=n$. The rest of the proof follows verbatim the proofs of Lemmas~\ref{hup0},~\ref{hup} and~\ref{hup0PSU}. The structure of ${\bf C}_{\mathrm{Aut}(\mathrm{PSU}_n(q))}(T_1)$ can be inferred from~\cite[Section~3.2, Table~B.4]{BGbook} or~\cite[Chapter~4]{GLS}.
\end{proof}

\begin{lemma}\label{hupPSU28}Let $n\geqslant 4$ be even with $n/2$ odd. Suppose $r^{fn/2}-1$ admits a primitive prime divisor $t_1'$. Let $g\in \mathrm{Aut}(\mathrm{PSU}_n(q))$ with $t_1'$ dividing $|{\bf N}_{\mathrm{Aut}(\mathrm{PSU}_n(q))}{(\langle g\rangle)}|$ and let $T_1'$ be a cyclic subgroup of order $t_1'$ in ${\bf N}_{\mathrm{Aut}(\mathrm{PSU}_n(q))}{(\langle g\rangle)}$. Then
\[
g\in {\bf C}_{\mathrm{Aut}(\mathrm{PSU}_n(q))}(T_1')=
\langle T,\iota\rangle,
\] 
where $T$ is a maximal torus of $\mathrm{PGU}_n(q)$ having order $(q^{n}-1)/(q+1)$, $\iota\in \mathrm{Aut}(\mathrm{PSU}_n(q))$, $|\iota|=2$ and ${\bf C}_{\mathrm{PGU}_n(q)}(\iota)\cong \mathrm{PGSp}_n(q)$. In particular, $|{\bf N}_{\mathrm{Aut}(\mathrm{PSU}_n(q))}{(\langle g\rangle)}:{\bf C}_{\mathrm{Aut}(\mathrm{PSU}_n(q))}(g)|$ is relatively prime to $t_1'$.
\end{lemma}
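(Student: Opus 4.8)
The plan is to follow, almost verbatim, the template already used in Lemmas~\ref{hup0},~\ref{hup},~\ref{hup0PSU} and~\ref{hupPSU}; the only point that needs fresh attention is the number-theoretic input that fixes the shape of ${\bf C}_{\mathrm{Aut}(\mathrm{PSU}_n(q))}(T_1')$, and once this is in place the dichotomy argument transfers with no change. First I would record the divisibility fact that here plays the role the hypothesis ``$n/2$ even'' played in Lemma~\ref{hupPSU}. Since $t_1'$ is a primitive prime divisor of $r^{fn/2}-1=q^{n/2}-1$, the multiplicative order of $q$ modulo $t_1'$ is exactly $n/2$; hence $t_1'\equiv 1\pmod{n/2}$, and because $n/2$ is odd while $t_1'$ is an odd prime the residue $n/2+1$ is even and thus excluded, forcing $t_1'\geqslant n+1>n$. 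I then claim that, for $1\leqslant i\leqslant n$, the prime $t_1'$ divides $q^i-(-1)^i$ only when $i=n$: for $i$ even this requires $(n/2)\mid i$, whose only even solution in range is $i=n$ (as $n/2$ is odd), and for $i$ odd, $t_1'\mid q^i+1$ would force $(n/2)\mid 2i$ but $(n/2)\nmid i$, impossible since $\gcd(n/2,2)=1$. This is precisely the analogue of the statement invoked in Lemma~\ref{hupPSU}.

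With that fact established, the torus of order $(q^n-1)/(q+1)$ is the only irreducible torus of $\mathrm{PGU}_n(q)$ whose order is divisible by $t_1'$: writing $(q^n-1)/(q+1)=(q^{n/2}-1)\cdot(q^{n/2}+1)/(q+1)$ (an integer because $q+1\mid q^{n/2}+1$ when $n/2$ is odd) shows $t_1'\mid(q^n-1)/(q+1)$, while $t_1'\nmid q+1$ since $\gcd(q^{n/2}-1,q+1)\mid 2$. Feeding this into~\cite[Section~3.2, Table~B.4]{BGbook} (or~\cite[Chapter~4]{GLS}) yields ${\bf C}_{\mathrm{Aut}(\mathrm{PSU}_n(q))}(T_1')=\langle T,\iota\rangle$ with $T$ and $\iota$ exactly as in the statement, just as in the $n/2$ even case. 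At this stage the whole lemma reduces to ruling out the bad case $t_1'\nmid|{\bf C}_{\mathrm{Aut}(\mathrm{PSU}_n(q))}(g)|$: if instead every cyclic subgroup of order $t_1'$ inside ${\bf N}_{\mathrm{Aut}(\mathrm{PSU}_n(q))}(\langle g\rangle)$ lies in the centraliser of $g$, then $g\in {\bf C}_{\mathrm{Aut}(\mathrm{PSU}_n(q))}(T_1')=\langle T,\iota\rangle$, and the index $|{\bf N}_{\mathrm{Aut}(\mathrm{PSU}_n(q))}(\langle g\rangle):{\bf C}_{\mathrm{Aut}(\mathrm{PSU}_n(q))}(g)|$ is automatically coprime to $t_1'$, since a $t_1'$-element of ${\bf N}$ acting nontrivially on $\langle g\rangle$ would furnish a choice of $T_1'$ not centralising $g$.

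To discard the bad case I would argue exactly as in Lemmas~\ref{hup0} and~\ref{hup0PSU}. Since ${\bf N}(\langle g\rangle)/{\bf C}(g)$ embeds faithfully in $\mathrm{Aut}(\langle g\rangle)$ and $t_1'\mid|{\bf N}(\langle g\rangle)|$, we get $t_1'\mid\varphi(|g|)$, so $|g|$ has a prime divisor $p$ with $t_1'\mid p-1$; replacing $g$ by a suitable power, take $|g|=p$. Then $p>t_1'>fn/2\geqslant 2f$ and $p>t_1'>n\geqslant\gcd(n,q+1)$ (here $n\geqslant 4$ is used), so $p$ exceeds the orders of both the field and the diagonal outer contributions, forcing $g\in\mathrm{PSU}_n(q)$; moreover $p\neq r$, because $t_1'\nmid r-1$ as $t_1'$ is a primitive prime divisor with $fn/2\geqslant 2$, so $g$ is semisimple. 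Finally $T_1'$ normalises $\langle g\rangle$ and acts nontrivially on it, hence permutes the at most $n$ eigenspaces of $g$ nontrivially, which is impossible for a group of prime order $t_1'>n$ acting on a set of size at most $n$. The hard part is entirely contained in the first paragraph: establishing the ``only $i=n$'' divisibility and reading off the resulting $\langle T,\iota\rangle$ from the tables; after that the dichotomy and the eigenspace count are routine and identical to the cited lemmas.
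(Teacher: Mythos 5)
Your proposal is correct and follows essentially the same route as the paper's proof, which itself simply defers to the template of Lemmas~\ref{hup0},~\ref{hup0PSU} and~\ref{hupPSU} and isolates exactly the two points you treat in detail: that $t_1'>n$ (your parity argument via $t_1'\equiv 1\pmod{n/2}$ is the same as the paper's observation that $t_1'=\alpha f\tfrac{n}{2}+1$ with $\alpha=f=1$ forced to be even, hence impossible) and that $t_1'$ divides $q^i-(-1)^i$ only for $i=n$, after which the centraliser structure is read off the tables and the faithful-action-on-$\langle g\rangle$ dichotomy plus the eigenspace count finish the job exactly as in the cited lemmas.
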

\begin{proof}
The hypothesis $n/2$ odd is used to guarantee that $t_1'$ divides $q^i-(-1)^i$, only when $i=n$. The rest of the proof follows verbatim the other analogous proofs. For this lemma, the only part that is not trivial is  showing that, $t_1'$ is relatively prime to $|{\bf N}_{\mathrm{Aut}(\mathrm{PSU}_n(q))}(\langle g\rangle):{\bf C}_{\mathrm{Aut}(\mathrm{PSU}_n(q))}(g)|$. Arguing as usual, we have that $t_1'\mid p-1$, $|g|=p$ and $g$ is semisimple because $fn/2<t_1'$. As $t_1'$ is a primitive prime divisor of $r^{fn/2}-1$ we have that  $fn/2$ divides $t_1'-1$ and hence $$\alpha f\frac{n}{2}+1=t_1',$$ for some $\alpha\in\mathbb{N}$. If $\alpha f>1$, then $t_1'>n$. If $\alpha=f=1$, then $t_1'=n/2+1$ is even because $n/2$ is odd, which is a contradiction because $n\geqslant 4$. Therefore, in all cases $t_1'>n$.
Now, $T_1'$ permutes the eigenspaces of $g$, but it must permute the eigenspaces trivially because $t_1'>n$.
\end{proof}

From here onward we make use more intensively of the work of Liebeck, Praeger and Saxl on maximal factorizations~\cite{LPS1,LPS2}.
\begin{lemma}\label{casePSU}
 If $L=\PSU_n(q)$ with $n\ge3$, then $(G,x,y)$ is in Line~$8$ or~$9$ of Table~$\ref{table1}$.
\end{lemma}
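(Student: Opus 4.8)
The plan is to follow exactly the template established for linear groups in Section~\ref{sec:classicalpsl}, using the three primitive-prime-divisor lemmas just proved (Lemmas~\ref{hup0PSU}, \ref{hupPSU}, \ref{hupPSU28}) to pin down the structure of $X={\bf N}_G(\langle x\rangle)$ and $Y={\bf N}_G(\langle y\rangle)$, and then ruling out all but the claimed configurations by comparing orders of Sylow $r$-subgroups. As in the linear case, I would first dispose of the small cases: the groups isomorphic to others already treated (e.g.\ $\mathrm{PSU}_3(2)$ is not simple, and low-rank isomorphisms), and a short explicit list of small $(n,q)$ handled directly by \textsc{Magma} using the efficient test of Lemma~\ref{le1}. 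For the generic case I would invoke the classification of maximal factorizations in~\cite{LPS1,LPS2}: since $G=XY$ with $X,Y$ contained in core-free maximal subgroups, the triple $(G,A,B)$ appears in the relevant tables, and it suffices to analyze the normalizers of cyclic subgroups inside those $A$ and $B$.

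The main case split will be on the parity of $n$, since the existence and behavior of the relevant primitive prime divisors depends on it. When $n$ is odd, I expect the analysis to mirror the linear case closely and to produce \emph{no} factorizations: the relevant tables of~\cite{LPS1} show the factor $A\cap L$ would have to be something like $\mathrm{SU}_{n-1}(q)$-type or a $\mathcal{C}_5$ subfield subgroup, and a Sylow $r$-subgroup count gives $|X|_r|Y|_r<|G|_r$, a contradiction. When $n$ is even, the sub-case $n/2$ even uses $t_1$ as in Lemma~\ref{hupPSU} and $t_2$ as in Lemma~\ref{hup0PSU}; here one of the elements (say $x$) lands in $\langle T,\iota\rangle$ with ${\bf C}_{\mathrm{PGU}_n(q)}(\iota)\cong\mathrm{PGSp}_n(q)$, identifying $x$ as an involution in $\mathrm{P}\Gamma\mathrm{U}_n(q)\setminus\mathrm{PGU}_n(q)$ of symplectic type, while $y$ is forced into the torus $T$ of order $q^{n-1}+1$, giving a semisimple element with an $(n-1)$-dimensional eigenspace of order dividing $q+1$. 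This is precisely Line~9. The sub-case $n/2$ odd additionally uses the divisor $t_1'$ of Lemma~\ref{hupPSU28}; this is where the exceptional family $\mathrm{PSU}_n(4)$ with $|x|=3$, $|y|=5$ (Line~8) should emerge, exactly paralleling the $q=4$ exception in the linear analysis (Line~7).

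Concretely, for each admissible configuration I would write down the torus orders and the exact structure of ${\bf C}_L(x)$, ${\bf C}_L(y)$ from~\cite[Chapter~4]{GLS} and~\cite[Section~3.2]{BGbook}, then carry out the order comparison. As in Section~\ref{sec:classicalpsl}, when $x$ or $y$ is not an involution I would replace it by an appropriate power ($x^2$, $y^2$, or a prime-power part via Lemma~\ref{lem:primeorder}) to reduce to the involution case already settled, then check that the extra factor in the order does not permit a factorization. The surviving configurations must then be verified to be \emph{genuine} factorizations, not merely numerically consistent ones, using the Frattini-type transitivity argument (the torus element $y$ with an $(n-1)$-dimensional eigenspace fixes a non-degenerate hyperplane, so $Y$ is transitive on the relevant subspaces and $G=XY$ follows via Lemma~\ref{l: smaller}).

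I expect the main obstacle to be the bookkeeping around the field $\mathbb{F}_{q^2}$ versus $\mathbb{F}_q$ in the unitary setting: the convention $q_0=q^2$ means eigenvalue and eigenspace conditions must be read over $\mathbb{F}_{q^2}$, and the interplay between the factor $q+1$ (defining $\mathrm{PGU}_n(q)$ over $\mathrm{PSU}_n(q)$) and the torus orders $q^{n-1}+1$ and $(q^n-1)/(q+1)$ requires care to avoid off-by-a-factor errors in the Sylow counts. A second delicate point is correctly distinguishing, in the even-dimensional case, the two conditions ``$2$ divides $|G:L|$'' (Line~9) versus ``$4$ divides $|G:L|$'' (Line~8); this mirrors the subtle remark in Line~7 about $G\nleq\langle\mathrm{PGL}_n(4),\tau\rangle$ and will likely need a refined $2$-part computation comparing $|G|_2$ with $|X|_2|Y|_2/|X\cap Y|_2$ rather than the coarse $r$-part bound.
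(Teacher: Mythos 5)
Your overall architecture matches the paper's: dispose of small $(n,q)$ by computer, use the maximal factorizations of \cite{LPS1,LPS2}, apply the primitive-prime-divisor lemmas to force $y$ into the torus of order $q^{n-1}+1$ and $x$ into ${\bf C}_{\Aut(L)}(T_1)=\langle T_x,\iota\rangle$, and finish with Sylow $r$-part comparisons and reduction to prime order. (For $n$ odd you can in fact skip the Sylow count entirely: \cite{LPS1,LPS2} show that $G$ admits \emph{no} proper factorization at all outside a short list of small cases, which is how the paper reduces immediately to $n$ even.)

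However, there is a genuine error in how you organize the case division for $n$ even, and it would cause you to miss examples. You attribute Line~9 to the sub-case $n/2$ even and Line~8 to the sub-case $n/2$ odd, "paralleling" the $q=4$ exception of the linear analysis. This is a misreading of what Lemmas~\ref{hupPSU} and~\ref{hupPSU28} do: they are two technical lemmas covering the two parities of $n/2$ (because the relevant primitive prime divisor is taken from $r^{fn}-1$ in one case and $r^{fn/2}-1$ in the other), but they deliver the \emph{identical} conclusion $x\in\langle T_x,\iota\rangle$ with $|T_x|=(q^n-1)/(q+1)$ and ${\bf C}_{\PGU_n(q)}(\iota)\cong\mathrm{PGSp}_n(q)$. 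The true dichotomy producing Lines~8 and~9 is whether $x\in T_x$ or $x\notin T_x$: if $x\in T_x$ then $x$ is semisimple and $X$ is a field-extension subgroup, and the order comparison forces $\ell=2$, $q=4$, $|x|=3$ --- this is Line~8, and it occurs for \emph{both} parities of $n/2$ (neither Line~8 nor Line~9 carries a condition on $n/2$ in Table~\ref{table1}); if $x\notin T_x$ and $|x|=2$ then $x$ is the symplectic-type involution and one lands in Line~9, again for both parities. As written, your plan would only look for Line~9 when $n/2$ is even and only for Line~8 when $n/2$ is odd. A second point you gloss over: Lemma~\ref{hup0PSU} pins down the torus containing $y$, but not the structure of $Y$ itself, which could a priori be a field-extension subgroup of type $\GU_{(n-1)/\kappa}(q^\kappa).\kappa$ acting on the hyperplane with $\kappa>1$; ruling this out (the paper's claim $\kappa=1$) requires a separate argument combining the list of admissible overgroups $A$ of $X$ with further primitive prime divisors and minimal-permutation-degree bounds, and is needed before you can assert that $y$ has an $(n-1)$-dimensional eigenspace.
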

\begin{proof}
When $(n,q)\in\{(3,3),(3,5),(3,8),(4,2),(4,3),(6,2),(6,4),(9,2),(12,2)\}$, the proof follows with a computer computation with the computer algebra system \textsc{Magma}.

 From~\cite{LPS1,LPS2}, we see that, when $n$ is odd, $G$ admits no proper factorization, except when $(n,q)\in \{(3,3),(3,5),(3,8),(9,2)\}$. In particular, since we have already dealt with these cases, for the rest of the proof, we may assume that $n$ is even. Set $m:=n/2$. The other pairs that we have excluded with the computer computation allow us  to conclude that the only maximal factorizations of $G$ are listed in~\cite[Table~1]{LPS1} and, via Zsigmondy's theorem, to guarantee the existence of a primitive prime divisor of $r^{2f(n-1)}-1$ and  $r^{fn}-1$ when $n/2=m$ is even, and $r^{fn/2}-1$ when $n/2=m$ is odd. 

Let $t_2$ be a primitive prime divisor of $r^{2f(n-1)}-1$. As $t_2$ divides $|L|$, without loss of generality, we may suppose that $t_2$ divides $|Y|$. Let $T_2$ be a cyclic subgroup of order $t_2$ in $Y$ and let $T_y$ be a maximal torus of $\PGU_n(q)$ containing $T_2$. From Lemma~\ref{hup0PSU}, we obtain
\[
y\in T_y,
\]
where $T_y$ is a torus having order $q^{n-1}+1$.
 As $n$ is even, from~\cite[Section~$3.2$]{BGbook}, the order of $Y$ divides 
\begin{equation}\label{eq:c19u1}|G:G\cap \mathrm{PGU}_n(q)|\kappa|\mathrm{GU}_{(n-1)/\kappa}(q^\kappa)|,
\end{equation}
for some divisor $\kappa$ of $n-1$. Moreover, from the structure of $T_y$, we also deduce that $Y$ is contained in the stabilizer of a $1$-dimensional non-degenerate subspace of $V$.
We claim that
\begin{align}\label{kappa=1PSU}
\kappa=1.
\end{align}

Since we have excluded above the pairs $(n,q)\in \{(4,2),(4,3),(6,2),(12,2)\}$ and since $Y$ fixes a non-degenerate $1$-dimensional subspace, using the classification of the factorizations of almost simple groups with socle $L=\PSU_n(q)$ in~\cite{LPS1,LPS2}, we see that $X$ must be contained in a maximal subgroup $A$ of $G$ with the property that one of the following holds:
\begin{enumerate}[label=(\roman*)]
\item\label{casepsu1}$L\cap A=P_m$,
\item\label{casepsu2}$L\cap A=\mathrm{PSp}_{2m}(q)=\mathrm{PSp}_n(q)$,
\item\label{casepsu3}$L\cap A={\hat{}\,}\SL_{m}(4).2$, $q=2$, $m\geqslant 3$,
\item\label{casepsu4}$L\cap A={\hat{}\,}\SL_m(16).3.2$, $q=4$ and $G\geqslant L.4$.
\end{enumerate}
In Cases~\ref{casepsu2},~\ref{casepsu3} and~\ref{casepsu4}, by comparing the size of a Sylow $r$-subgroup of $Y$ with a Sylow $r$-subgroup of $A$, we deduce that $\kappa=1$, that is,~\eqref{kappa=1PSU} holds true. All of these computations are straightforward and we only give details to the case~\ref{casepsu2}. Here,
\begin{align*}
|G|_r&=|G:G\cap\mathrm{PSU}_n(q)|_rq^{\frac{n(n-1)}{2}},\\
|X|_r&\leqslant (2f)_rq^{(n/2)^2},\\
|Y|_r&= |G:G\cap\mathrm{PSU}_n(q)|_r\kappa_rq^{\frac{(n-\kappa)\left(\frac{n}{\kappa}-1\right)}{2}}.
\end{align*}
Now, a tedious computation using this information yields that
$|G|_r\leqslant |X|_r|Y|_r$ is satisfied only when $\kappa=1$.

 In Case~\ref{casepsu1}, using the structure of $P_m$ (which can be deduced by fixing a hyperbolic basis for $V$), we have $$P_m\cap L\cong E_q^{m^2}:\mathrm{SL}_m(q^2).(q-1).$$
We claim that also in this case $\kappa=1$. To prove this claim we use the factorization of the order of $|\mathrm{SL}_m(q^2)|$ into cyclotomic polynomials,~\eqref{eq:c19u1} and Zsigmondy's theorem.
Assume first that $n\geqslant 8$ and let $t'$ be a primitive prime divisor of $q^{2(n-3)}-1$. In particular, $t'$ divides $q^{n-3}+1$ and hence $t'$ divides $|L|$, because $n$ is even. Now, as $2(n-3)>n$, $t'$ is relatively prime to $|P_m|$ because $$(q^{2m}-1)(q^{2m-2}-1)\cdots (q^4-1)(q^2-1)$$ cannot be divisible by $t'$. Therefore, $t'$ divides $|Y|$; but this is only possible when $\kappa=1$.  When $n=4$, we have $n-1=3$ and hence $\kappa\in \{1,3\}$. However, if $\kappa=3$, then $|Y|\leqslant 3(q^3+1)|G:G\cap\mathrm{PGU}_4(q)|$ and hence $$|G:X|\leqslant 3(q^3+1).$$ The minimal degree of a faithful permutation representation of $\mathrm{PSU}_4(q)$ is $(q+1)(q^3+1)$ (see~\cite[Table~4]{Guest}). As $3(q^3+1)<(q+1)(q^3+1)$, we have $L\leqslant X$, which is a contradiction. Finally, when $n=6$, we have $n-1=5$ and hence $\kappa\in \{1,5\}$. However, if $\kappa=5$, then $|Y|\leqslant 5(q^5+1)|G:G\cap\mathrm{PGU}_6(q)|$ and hence $$|G:X|\leqslant 5(q^5+1).$$ The minimal degree of a faithful permutation representation of $\mathrm{PSU}_6(q)$ is at least  $q^{5}(q^4+q^2+1)$ (see~\cite[Table~4]{Guest}). As $5(q^5+1)<q^5(q^4+q^2+1)$,  we have $L\leqslant X$, which is a contradiction. This concludes the proof of our claim and hence we have proved~\eqref{kappa=1PSU}.

From~\eqref{kappa=1PSU}, $Y\cap L$ is of type 
${\hat{}\,}\mathrm{GU}_{n-1}(q)$ and $y\in \Z{\,{\hat{}\,}\mathrm{GU}_{n-1}(q)}$. Thus $y$ is a semisimple element of order a divisor of $q+1$ and $y$ has an $(n-1)$-dimensional eigenspace. In particular, the order of $Y$ divides
\begin{equation}\label{eq:true}|G:G\cap \mathrm{PGU}_n(q)||\mathrm{GU}_{n-1}(q)|.\end{equation}

\smallskip

Using the description of $Y$, we have that $|G:Y|=|X:X\cap Y|$ is divisible by $$q^{n-1}\cdot\frac{q^{n}-1}{\gcd(n,q+1)(q+1)}.$$ Let $t_1$ be a primitive prime divisor of $r^{fn}-1=q^n-1$ when $n/2$ is even and let $t_1$ be a primitive prime divisor of $r^{fn/2}-1=q^{n/2}-1$ when $n/2$ is odd. Observe that in either case, $t_1$ divides $|X|$. Let $T_1$  be a 
cyclic subgroup of $X$ of order  $t_1$, let 
$C_1:={\bf C}_{\mathrm{Aut}(L)}(T_1)$ and  let $T_x$ be a maximal torus of $\PGU_n(q)$ 
containing $T_1$. From Lemmas~\ref{hupPSU} and~\ref{hupPSU28}, we obtain
\[
C_1=
\langle T_x,\iota\rangle,\quad|\iota|=2, \, \iota\in \Aut(\mathrm{PSU}_n(q))\setminus\mathrm{PSU}_n(q)\textrm{ and }\cent{\mathrm{PGU}_n(q)}\iota\cong\mathrm{PGSp}_n(q).
\]

\subsection{Assume $x\in T_x$.}\label{subsection21} Thus $x$ is a semisimple element and $X$ is a field extension subgroup of $G$.  Thus, using the information in~\cite[Section~$3.2$, Table~B.1,~B.2,~B.3]{BGbook}, we deduce that  the order of $X$ divides 
\begin{equation}\label{eq:c19u}
\frac{|G:G\cap \mathrm{PGU}_n(q)|}{q+1}\ell|\mathrm{GU}_{n/\ell}(q^\ell)| \quad\textrm{ or }\quad \frac{|G:G\cap \mathrm{PGU}_n(q)|}{q+1}\ell|\mathrm{GL}_{n/\ell}(q^\ell)|,
\end{equation}
for some divisor $\ell$ of $n$ with $\ell\geqslant 2$: where the case on the right occurs when $\ell$ is even and the case on the left occurs when $\ell$ is odd.

From the structure of $Y$, we deduce $|X\cap Y|_r\geqslant |\mathrm{GU}_{n/\ell-1}(q^\ell)|_r$ if $\ell$ is odd, and $|X\cap Y|_r\geqslant |\mathrm{GL}_{n/\ell-1}(q^\ell)|_r$ if $\ell$ is even. Suppose first that $\ell$ is odd. 
Using~\eqref{eq:true} and~\eqref{eq:c19u}, we have
\begin{align*}
|G:G\cap \mathrm{PGU}_n(q)|_rq^{\frac{n(n-1)}{2}}&=&& |G|_r\leqslant \frac{|X|_r|Y|_r}{|X\cap Y|_r}\\
&\leqslant &&|G:G\cap \mathrm{PGU}_n(q)|_r^2\ell_rq^{\frac{n}{2}\left(\frac{n}{\ell}-1\right)}q^{\frac{(n-1)(n-2)}{2}}q^{-\frac{(n-\ell)}{2}\left(\frac{n}{\ell}-2\right)}.
\end{align*}
A computation yields that this inequality is never satisfied since $\ell\geqslant 3$. This shows that $\ell$ is even and $X$ is of type $\mathrm{GL}_{n/\ell}(q^\ell).\ell$. Then, using~\eqref{eq:true} and~\eqref{eq:c19u}, we have again
\begin{equation}\label{eq:true1}q^{\frac{n(n-1)}{2}}\leqslant (|G:G\cap \mathrm{PGU}_n(q)|\ell)_r
q^{\frac{n}{2}\left(\frac{n}{\ell}-1\right)}q^{\frac{(n-1)(n-2)}{2}}q^{-\frac{(n-\ell)}{2}\left(\frac{n}{\ell}-2\right)}.
\end{equation}
Another computation in the same spirit as the one above shows that $\ell=2$ and $q\in \{2,4,16\}$. By refining the computation above, we see that the case $q=16$ does not actually arise. This can be seen by observing that~\eqref{eq:true1} is satisfied only when $G$ contains the whole field automorphism of $\mathbb{F}_{q^2}$, but now we can refine the bound $|X\cap Y|_2$ with $2|\mathrm{GL}_{n/2-1}(q^2)|_2$. Now, another computation with this refined value for $|X\cap Y|_2$ excludes the case $q=16$.   (Observe that this is in line with~\cite[Table~1]{LPS1}, where we see that maximal factorizations using $\mathcal{C}_3$-subgroups arise only when $q\in \{2,4\}$: see Cases~\ref{casepsu3} and~\ref{casepsu4} above.) When, $q=2$, $X$ is not a local subgroup (see for instance~\cite[Proposition~4.2.4]{KL}) and hence this case does not arise. Thus $q=4$, $x$ is an element of order $(q^2-1)/(q+1)=15/5=3$ 
having no eigenvalue in $\mathbb{F}_{q^2}$ and $y$ is an element of order $5$ having an eigenspace of dimension $n-1$. Moreover, consulting~\cite[Table~1]{LPS1} or by a careful computation as above, we see that $4$ divides $|G:L|_2$. This example is in Line~8 of Table~\ref{table1}.

\subsection{Assume $x\notin T_x$ and $|x|=2$.}\label{subsection22}  Using the results in~\cite[Table~B.4]{BGbook} (or in~\cite[Chapter~4]{GLS}), we see that ${\bf O}^{r'}(\cent L x)$ is of type $$\mathrm{PSp}_n(q),\, \hbox{ or }\mathrm{P}\Omega_n^\varepsilon(q)$$
and the second case only occurs when $q$ is odd.  By distinguishing these two possibilities for $X$, it is not hard to prove that $|G|_r|X\cap Y|_r=|X|_r|Y|_r$ yields that $X$ is of type $\PSU_n(q^{1/2})$ or $\PSp_n(q)$ (to exclude the case $\POmega_n^\varepsilon(q)$ we require the fact that $q$ is odd). Incidentally, the case that $X$ is of type $\POmega_n^\varepsilon(q)$ can also be excluded by checking~\cite[Table~1]{LPS1}.

When $X$ is of type $\PSp_n(q)$, we obtain that $x$ is an involution of order $2$ with $\cent Lx\cong \PSp_n(q)$ and that $y$ is an element of order dividing $q+1$ and  having an eigenspace of dimension $n-1$.  This example is in Line~9 of Table~\ref{table1}.


\subsection{Assume $x\notin T_x$ and $|x|>2$.}\label{subsection23} As $x^2\in T_x\setminus\{1\}$ and $x^2\ne 1$, applying \S\ref{subsection21} to the factorization $G=\nor G{\langle x^2\rangle}Y$, we obtain that the triple $(G,x^2,y)$ is  in Line~$8$ of Table~\ref{table1}. Therefore, $q=4$, $\nor L{\langle x^2\rangle}$ is of type $\GL_{n/2}(q^2).2$ and $x^2$ has order $3$. Now, $x^3\notin T_x$ and $x^3$ has order $2$. Therefore, applying \S\ref{subsection22} to the factorization $G=\nor G{\langle x^3\rangle}Y$, we obtain that the triple $(G,x^3,y)$ is in Line~9 of Table~\ref{table1}. Thus $x^2\in {\bf N}_L(\langle x^3\rangle)=\cent L{x^3}\cong \mathrm{Sp}_n(4)$. Thus 
\begin{equation}\label{neverend}{\bf N}_L(\langle x\rangle )={\bf N}_L(\langle x^2\rangle)\cap{\bf C}_L(x^3)\cong{\bf N}_{\mathrm{Sp}_n(4)}(\langle x^2\rangle)\cong 3.\mathrm{SL}_{n/2}(4).2.
\end{equation}

Now, let $p$ be a primitive prime divisor of $2^{fn}-1=q^{n}-1$. Observe that, from~\eqref{neverend}, $p$ is relatively prime to $|X|$. When $n/2$ is even, $p$ is also relatively prime to $|\mathrm{GU}_{n-1}(q)|$ and hence $p$ is relatively prime to $|Y|$. Therefore, in order to have $G=XY$,  $n/2$ must be odd. Assume $n/2$ is odd. Given $i\in \{1,\ldots,n\}$, $p$ divides $q^i-(-1)^i$ if and only if $i\in \{n/2,n\}$. 
 Therefore, $|G|_p>|Y|_p$ and hence also in this case we have $G\ne XY$. Summing up, we have shown that this case does not give rise to a factorization of $G$.
 
\smallskip
 
 Using the explicit description of ${\bf N}_G(\langle x\rangle )$, ${\bf N}_G(\langle y\rangle)$ in Line~8 and~9, it is readily seen that $G={\bf C}_G(x){\bf C}_G(y)$ when $(G,x,y)$ is in Line~9 and  ${\bf C}_G(x){\bf C}_G(y)<G$ when $(G,x,y)$ is in Line~8.  Thus we have the $\surd$ symbol in Line~9, whereas $\surd$ is omitted in Line~8.
\end{proof}


\section{Classical groups: symplectic groups}\label{sec:classicalpsp}
\begin{lemma}\label{hup0PSp}Let $n\geqslant 4$ be even. Suppose  $r^{fn}-1$ admits a primitive prime divisor $t_1$. Let $g\in \mathrm{Aut}(\mathrm{PSp}_n(q))$ with $t_1$ dividing $|{\bf N}_{\mathrm{Aut}(\mathrm{PSp}_n(q))}{(\langle g\rangle)}|$ and let $T_1$ be a cyclic subgroup of order $t_1$ in 
${\bf N}_{\mathrm{Aut}(\mathrm{PSp}_n(q))}{(\langle g\rangle)}$. Then 
\[
g\in {\bf C}_{\mathrm{Aut}(\mathrm{PSp}_n(q))}(T_1)=
\begin{cases}
\langle T,\iota\rangle,&\textrm{when }n=4, r=2,f \textrm{ is odd},\\&\iota\textrm{ graph-field automorphism,}\\
T,&\textrm{otherwise},
\end{cases}
\]
where $T$ is a maximal torus of $\mathrm{PGSp}_n(q)$ having order $q^{n/2}+1$, that is, $T$ is a Singer cycle. In particular, $|\mathrm{Aut}(\mathrm{PSp}_n(q)):{\bf C}_{\mathrm{Aut}(\mathrm{PSp}_n(q))}(g)|$ is relatively prime to $t_1$.
\end{lemma}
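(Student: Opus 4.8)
The plan is to mirror the arguments of Lemmas~\ref{hup0},~\ref{hup} and~\ref{hup0PSU}, splitting the analysis according to whether or not $T_1$ centralises $g$; the only genuinely new feature is the exceptional behaviour of $\Sp_4(2^f)$ in characteristic~$2$. First I would dispose of the case $t_1\nmid|\cent{\Aut(\PSp_n(q))}{g}|$ by showing that it cannot occur. Since $\nor{\Aut(\PSp_n(q))}{\langle g\rangle}/\cent{\Aut(\PSp_n(q))}{g}$ embeds in $\Aut(\langle g\rangle)$, which has order $\varphi(|g|)$, the hypothesis $t_1\mid|\nor{\Aut(\PSp_n(q))}{\langle g\rangle}|$ forces $t_1\mid\varphi(|g|)$, so some prime $p\mid|g|$ satisfies $t_1\mid p-1$; replacing $g$ by a power we may take $|g|=p$. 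As $fn\mid t_1-1$ we have $p>t_1>fn$, and since $|\Out(\PSp_n(q))|$ divides $2f<p$ the image of $g$ in $\Out(\PSp_n(q))$ is trivial, so $g\in\Inndiag(\PSp_n(q))=\mathrm{PGSp}_n(q)$; as $p$ is odd this gives $g\in\PSp_n(q)$. Because $t_1$ is a primitive prime divisor we have $p\neq r$, so $g$ is semisimple and has at most $n$ eigenspaces on $V$. As $T_1$ does not centralise $g$ it must permute these eigenspaces nontrivially, which is impossible for a group of prime order $t_1>n$. This contradiction is the crux, and it is identical in spirit to the corresponding step in Lemma~\ref{hup0}.

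It then remains to treat the case $T_1\leqslant\cent{\Aut(\PSp_n(q))}{g}$, where $g\in\cent{\Aut(\PSp_n(q))}{T_1}$ and the task is to compute this centraliser. Since $t_1$ is a primitive prime divisor of $q^n-1$ with $t_1\mid q^{n/2}+1$ and $t_1\nmid q^{n/2}-1$, the group $T_1$ acts irreducibly on $V=\mathbb{F}_q^n$ and lies in a Singer torus $T$ of $\Sp_n(q)$ of order $q^{n/2}+1$; the standard centraliser computation for irreducible tori gives $\cent{\mathrm{PGSp}_n(q)}{T_1}=T$. A nontrivial field automorphism acts on $T_1$ as $\lambda\mapsto\lambda^{r^k}$ with $0<k<f$, which is nontrivial because $r$ has order $fn$ modulo $t_1$; hence no proper field (or field-times-inner-diagonal) automorphism centralises $T_1$, and one is reduced to the graph(-field) automorphisms, which exist only for $\Sp_4(q)$ with $r=2$. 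Here the parity condition enters: for $q=2^f$ the integer $\sqrt{2q}=2^{(f+1)/2}$ is rational exactly when $f$ is odd, which is precisely when the graph-field automorphism $\iota$ has fixed-point subgroup the Suzuki group $\mathrm{Sz}(q)={}^2B_2(q)$. Reading off the relevant semisimple classes and their automizers from~\cite[Chapter~4]{GLS} and~\cite[Section~3.3, Tables~B.1--B.4]{BGbook} then yields $\cent{\Aut(\PSp_4(q))}{T_1}=\langle T,\iota\rangle$ when $n=4$, $r=2$ and $f$ is odd, and $\cent{\Aut(\PSp_n(q))}{T_1}=T$ in all other cases, as claimed.

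Finally, for the ``in particular'' clause I would show that $\cent{\Aut(\PSp_n(q))}{g}$ contains a full Sylow $t_1$-subgroup of $\Aut(\PSp_n(q))$. The $t_1$-part of $|\Sp_n(q)|$ is carried entirely by the factor $q^{n/2}+1$ (as $t_1\mid q^{2i}-1$ only for $i=n/2$), so a Sylow $t_1$-subgroup $P$ of $\Aut(\PSp_n(q))$ lies in $T$; in the non-exceptional case $g\in T$ centralises the abelian group $T\geqslant P$, and one concludes exactly as in Lemma~\ref{hup0PSU}. In the exceptional case one checks that $t_1$ divides exactly one of the coprime factors $q\pm\sqrt{2q}+1$ of $q^2+1$, so that $P$ lies in the corresponding maximal torus of $\mathrm{Sz}(q)=\cent{\PSp_4(q)}{\iota}$; thus $\iota$, and hence every $g\in\langle T,\iota\rangle$, centralises $P$, giving the desired coprimality. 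I expect the main obstacle to be precisely this bookkeeping of the $\Sp_4(2^f)$ case, since it is the one configuration in which an outer automorphism survives in $\cent{\Aut(\PSp_n(q))}{T_1}$, and the Suzuki-group description of $\cent{\PSp_4(q)}{\iota}$ is what makes both the centraliser structure and the index statement transparent.
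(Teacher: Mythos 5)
Your proposal is correct and follows essentially the same route as the paper: the same dichotomy on whether $T_1$ centralises $g$, the same eigenspace-permutation contradiction when it does not (using $t_1>fn\geqslant n$), and the same identification of ${\bf C}_{\mathrm{Aut}(\mathrm{PSp}_n(q))}(T_1)$ with the Singer torus of order $q^{n/2}+1$, extended by the graph-field involution only for $\Sp_4(2^f)$ with $f$ odd, read off from the same references. The one small divergence is your treatment of the coprimality claim in the exceptional case, where you observe that the Sylow $t_1$-subgroup of $T$ is centralised by all of $\langle T,\iota\rangle$ via the Suzuki torus, whereas the paper argues that $g\in\langle T,\iota\rangle\setminus T$ is an involution with centraliser ${}^2B_2(q)$; your version is a minor variation and, if anything, slightly more careful, since it does not need every element of the nontrivial coset to be an involution.
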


\begin{proof}
Let $T$ be a maximal torus of $\mathrm{PGSp}_n(q)$ containing $T_1$. Then $T$ is a cyclic group of order $q^{n/2}+1$. The structure of ${\bf C}_{\mathrm{Aut}(\mathrm{PSp}_n(q))}(T_1)$ follows consulting~\cite[Section~3.4]{BGbook} and~\cite[Chapter~4]{GLS}.

Suppose that $T_1\leqslant {\bf C}_{\mathrm{Aut}(\mathrm{PSp}_n(q))}(g)$. 
Then $g\in {\bf C}_{\mathrm{Aut}(\mathrm{PSp}_n(q))}(T_1)$. If $g\in T$, then 
$|\mathrm{Aut}(\mathrm{PSp}_n(q)):{\bf C}_{\mathrm{Aut}(\mathrm{PSL}_n(q))}(g)|$ 
is relatively prime to $t_1$, because 
$T\leqslant {\bf C}_{\mathrm{Aut}(\mathrm{PSp}_n(q))}(g)$ 
and $|\mathrm{Aut}(\mathrm{PSp}_n(q)):T|$ is relatively prime to $t_1$. If $g\notin T$,  then $n=4$, $r=2$ and $f$ is odd. Moreover, $g$ is conjugate to $\iota$ because $T$ has odd order. Therefore, $g$ is an involution and hence 
${\bf C}_{\mathrm{Aut}(\mathrm{PSp}_n(q))}(g)={\bf N}_{\mathrm{Aut}(\mathrm{PSp}_n(q))}(\langle g\rangle)$. Moreover, ${\bf C}_{\mathrm{PSp}_n(q)}(g)\cong {}^2B_2(q)$. Since $|{}^2B_2(q)|=(q^2+1)q^2(q-1)$ we have that $|\mathrm{Aut}(\mathrm{PSp}_n(q)):{\bf C}_{\mathrm{PSp}_n(q)}(g)|$ is relatively prime to $t_1$.

Suppose that $T_1\nleq {\bf C}_{\mathrm{Aut}(\mathrm{PSp}_n(q))}(g)$. The usual argument using the faithful action of ${\bf N}_{\mathrm{Aut}(\mathrm{PSp}_n(q))}(\langle g\rangle)/{\bf C}_{\mathrm{Aut}(\mathrm{PSp}_n(q))}(g)$ on $\langle g\rangle$ gives that $g\in \mathrm{PSp}_n(q)$ and that $g$ has order divisible by a prime $p$ with $t_1\mid p-1$. However, as $t_1>fn\geqslant n$, we see that $T_1$ cannot permute non-trivially the eigenspaces of $g$.
\end{proof}

\begin{lemma}\label{hupPSp}Let $n\geqslant 6$ be even. Suppose  $r^{f(n-2)}-1$ admits a primitive prime divisor $t_2$. Let $g\in \mathrm{Aut}(\mathrm{PSp}_n(q))$ with $t_2$ dividing $|{\bf N}_{\mathrm{Aut}(\mathrm{PSp}_n(q))}{(\langle g\rangle)}|$ and let $T_2$ be a cyclic subgroup of order $t_2$ in 
${\bf N}_{\mathrm{Aut}(\mathrm{PSp}_n(q))}{(\langle g\rangle)}$. Then 
\[
g\in {\bf C}_{\mathrm{Aut}(\mathrm{PSp}_n(q))}(T_2)=(q^{\frac{n}{2}-1}+1)\circ\mathrm{GSp}_2(q).
\]In particular, $|\mathrm{Aut}(\mathrm{PSp}_n(q)):{\bf C}_{\mathrm{Aut}(\mathrm{PSp}_n(q))}(g)|$ is relatively prime to $t_2$.
\end{lemma}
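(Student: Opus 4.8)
The plan is to follow the template of Lemmas~\ref{hup0PSp},~\ref{hup0PSU} and~\ref{hupPSU28}, splitting according to whether or not $T_2\leqslant {\bf C}_{\mathrm{Aut}(\mathrm{PSp}_n(q))}(g)$. First I would record the geometry of $T_2$: since $t_2$ is a primitive prime divisor of $q^{n-2}-1$ and $n-2$ is even, $t_2$ divides $q^{(n-2)/2}+1=q^{n/2-1}+1$ but no smaller $q^i\pm1$, so a cyclic group $T_2$ of order $t_2$ inside $\mathrm{Sp}_n(q)$ must fix a nondegenerate $2$-dimensional subspace $W$ and act irreducibly (as a minus-type Singer cycle) on the complementary nondegenerate $(n-2)$-space $W^\perp$. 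Consulting~\cite[Section~3.4]{BGbook} and~\cite[Chapter~4]{GLS} for the centralizers of such semisimple elements, I would identify
\[
{\bf C}_{\mathrm{Aut}(\mathrm{PSp}_n(q))}(T_2)=(q^{n/2-1}+1)\circ\mathrm{GSp}_2(q),
\]
the first factor being the torus acting on $W^\perp$ and $\mathrm{GSp}_2(q)$ acting on $W$, with the central product reflecting the shared scalars. Here the hypothesis $n\geqslant6$ is what rules out field automorphisms centralizing $T_2$ (there is no graph automorphism in this range), precisely because a proper subfield subgroup $\mathrm{Sp}_n(q_0)$ cannot have order divisible by the primitive prime divisor $t_2$.

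In the case $T_2\leqslant{\bf C}(g)$ one has $g\in{\bf C}_{\mathrm{Aut}(\mathrm{PSp}_n(q))}(T_2)$ and the displayed structure follows at once. For the final assertion I would note that $t_2$ occurs to the first power in $|\mathrm{PSp}_n(q)|$: among the factors $q^{2i}-1$ with $1\leqslant i\leqslant n/2$, the divisor $t_2$ divides only the one with $2i=n-2$, and since $t_2>f(n-2)>f$ it does not divide the field-automorphism contribution either. Hence $t_2\,\|\,|\mathrm{Aut}(\mathrm{PSp}_n(q))|$, and as $t_2\mid|{\bf C}(g)|$ (because $T_2\leqslant{\bf C}(g)$) the index $|\mathrm{Aut}(\mathrm{PSp}_n(q)):{\bf C}_{\mathrm{Aut}(\mathrm{PSp}_n(q))}(g)|$ is prime to $t_2$.

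The substance is to show the remaining case $T_2\nleqslant{\bf C}(g)$ cannot occur. As in the earlier lemmas, the faithful action of ${\bf N}(\langle g\rangle)/{\bf C}(g)$ on the cyclic group $\langle g\rangle$ forces $t_2\mid\varphi(|g|)$, so some prime $p\mid|g|$ satisfies $t_2\mid p-1$; replacing $g$ by a power I may assume $|g|=p$. Since $p>t_2>f(n-2)\geqslant|\mathrm{Out}(\mathrm{PSp}_n(q))|$ for $n\geqslant6$, the prime $p$ does not divide $|\mathrm{Out}(\mathrm{PSp}_n(q))|$, so $g$ lies in the socle $\mathrm{PSp}_n(q)$; moreover $p\neq r$ (otherwise $t_2\mid r-1$, against primitivity), so $g$ is semisimple. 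Then $T_2$ permutes the at most $n$ eigenspaces of $g$ on $V\otimes\overline{\mathbb{F}}_q$, and nontrivially because it acts nontrivially on $\langle g\rangle$. Writing $t_2=\alpha f(n-2)+1$ with $\alpha\geqslant1$, either $\alpha f>1$ and then $t_2\geqslant2(n-2)+1>n$, so a group of prime order $t_2>n$ cannot act nontrivially on $\leqslant n$ points, a contradiction; or $\alpha=f=1$ and $t_2=n-1$.

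The one genuinely new point, and the main obstacle, is the boundary case $t_2=n-1$: here the parity trick used in Lemma~\ref{hupPSU28} fails, since $n-1$ is odd, so I would instead exploit symplectic self-duality. A cyclic group of prime order $n-1$ acting nontrivially on $\leqslant n$ eigenvalues must have an orbit $O$ of full length $n-1$. Lifting $g$ to $\mathrm{Sp}_n(q)$ (possible as $|g|=p$ is odd), the eigenvalue set is closed under $\lambda\mapsto\lambda^{-1}$, an involution commuting with the power map $\lambda\mapsto\lambda^{k}$ induced by $T_2$. Thus $O^{-1}$ is again a $T_2$-orbit of size $n-1$, and $O=O^{-1}$ would force $-1\in\langle k\rangle\leqslant\mathbb{F}_p^{\ast}$; but $\langle k\rangle$ has odd order $n-1$, so it contains no element of order $2$, whence $O\neq O^{-1}$. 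Being distinct orbits they are disjoint, giving $|O\cup O^{-1}|=2(n-1)>n$ distinct eigenvalues for $n\geqslant6$, a contradiction. This eliminates $t_2=n-1$ and completes the case $T_2\nleqslant{\bf C}(g)$, so $T_2\leqslant{\bf C}(g)$ always holds and the lemma follows.
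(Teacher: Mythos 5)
Your proof is correct, and its skeleton (split on whether $T_2\leqslant {\bf C}(g)$; in the hard case reduce to $|g|=p$ prime with $t_2\mid p-1$, force $g$ semisimple in $\mathrm{PSp}_n(q)$, write $t_2=\alpha f(n-2)+1$, and isolate the boundary case $\alpha f=1$, i.e.\ $t_2=n-1$) is the same as the paper's. Where you genuinely diverge is in eliminating $t_2=n-1$. The paper argues through the $T_2$-module structure of $V$ (an irreducible $(n-2)$-dimensional summand plus a $2$-dimensional fixed space), deduces that $g$ has only two or three eigenspaces, and concludes that a group of prime order $n-1$ must fix each of them setwise and hence centralize $g$. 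You instead exploit symplectic self-duality of the eigenvalue set of a lift of $g$ to $\mathrm{Sp}_n(q)$: a $T_2$-orbit $O$ of length $n-1$ on eigenvalues and its inverse orbit $O^{-1}$ are distinct (since $\langle k\rangle\leqslant\mathbb{F}_p^{\ast}$ has odd order $n-1$ and so cannot contain $-1$), hence disjoint, giving $2(n-1)>n$ distinct eigenvalues — a contradiction. This is clean and self-contained, needing only that the lift has odd order $p$ and inversion-closed spectrum, and it is where the evenness of $n$ enters your argument; the paper's route instead leans on the explicit orthogonal decomposition induced by $T_2$. Both are valid.

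One small repair is needed in your justification of the final assertion: you claim $t_2$ divides $|\mathrm{Aut}(\mathrm{PSp}_n(q))|$ exactly to the first power. Showing that $t_2$ divides only the factor $q^{n-2}-1$ does not give this, since a primitive prime divisor may divide $q^{n-2}-1$ to a higher power. The correct (and equally short) argument — the one the paper uses — is that the cyclic torus $T$ of order $q^{n/2-1}+1$ is central in ${\bf C}_{\mathrm{Aut}(\mathrm{PSp}_n(q))}(T_2)$ and hence contained in ${\bf C}(g)$, and the full $t_2$-part of $|\mathrm{Aut}(\mathrm{PSp}_n(q))|$ already divides $|T|$ because $t_2\nmid q^{n/2-1}-1$ and $t_2>f(n-2)$. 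With that adjustment your proof goes through.
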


\begin{proof} Using the information in~\cite[Chapter~4]{GLS}, we obtain
\[
{\bf C}_{\mathrm{Aut}(\mathrm{PSp}_n(q))}(T_2)=(q^{\frac{n}{2}-1}+1)\circ\mathrm{GSp}_2(q).
\]
Let $T$ be the cyclic subgroup of order $q^{n/2-1}+1$ in ${\bf C}_{\mathrm{Aut}(\mathrm{PSp}_n(q))}(T_2)$ and observe that $T$ is central in ${\bf C}_{\mathrm{Aut}(\mathrm{PSp}_n(q))}(T_2)$.

Suppose that $T_2\leqslant {\bf C}_{\mathrm{Aut}(\mathrm{PSp}_n(q))}(g)$. 
Then $g\in {\bf C}_{\mathrm{Aut}(\mathrm{PSp}_n(q))}(T_2)$ and hence $g$ centralizes $T$. Thus $T\leqslant {\bf C}_{\mathrm{Aut}(\mathrm{PSp}_n(q))}(g)$ and hence
$|\mathrm{Aut}(\mathrm{PSp}_n(q)):{\bf C}_{\mathrm{Aut}(\mathrm{PSL}_n(q))}(g)|$ 
is relatively prime to $t_2$, because so is $|\mathrm{PSp}_n(q):T|$.
 
Suppose that $T_2\nleq {\bf C}_{\mathrm{Aut}(\mathrm{PSp}_n(q))}(g)$. The usual argument using the faithful action of ${\bf N}_{\mathrm{Aut}(\mathrm{PSp}_n(q))}(\langle g\rangle)/{\bf C}_{\mathrm{Aut}(\mathrm{PSp}_n(q))}(g)$ on $\langle g\rangle$ gives that $g\in \mathrm{PSp}_n(q)$ and that $g$ has order divisible by prime $p$ with $t_2\mid p-1$.  Without loss of generality, we may suppose that $|g|=p$ and hence $g\in\mathrm{PSp}_n(q)$. Moreover, $g$ is semisimple, because $t_2$ divides $p-1$ and $t_2$ cannot divide $r-1$: recall that $t_2$ is a primitive prime divisor of $r^{f(n-2)}-1$. Now, there exists $\alpha\in\mathbb{N}$ with
$$t_2=\alpha f(n-2)+1.$$
If $\alpha f>1$, then $t_2>n$ and hence $T_2$ cannot permute non-trivially the eigenspaces of $g$. If $\alpha f=1$, then $t_2=n-1$ is prime. We show that this is impossible. Under the action of $T_2$, the vector space $V=\mathbb{F}_q^n$ decomposes as
$$V=W\perp W^\perp,$$
where $\dim_{\mathbb{F}_q}(W)=n-2$ and the symplectic form induced by $\mathrm{PSp}_n(q)$ on $W$ is non-degenerate, $W$ is an irreducible $\mathbb{F}_qT_2$-module, and $W^\perp$ is a $2$-dimensional trivial module for $T_2$. Since we are supposing that $T_2$ normalizes $\langle g\rangle$, $T_2$ permutes the eigenspaces of $g$. The orthogonal decomposition of $V$ above yields that $g$ has one eigenspace of dimension $n-2$, and then another eigenspace of dimension $2$, or two eigenspaces of dimension $1$. In either case, since $t_2=n-1$ is prime, we see that $T_2$ fixes setwise each eigenspace of $g$. From this it follows that $T_2$ centralizes $g$.
\end{proof}
\begin{lemma}\label{casePSp}
 If $L=\PSp_n(q)$, then $(G,x,y)$ is in Line~$10$  of Table~$\ref{table1}$.
\end{lemma}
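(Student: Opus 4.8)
The plan is to mirror the strategy used in the linear and unitary cases (Lemmas~\ref{casePSLnpart2} and~\ref{casePSU}): use the primitive prime divisors furnished by Lemmas~\ref{hup0PSp} and~\ref{hupPSp} to pin down the location of $x$ and $y$, feed this into the maximal factorisations classified in~\cite{LPS1,LPS2}, and then eliminate every configuration except one by comparing Sylow $r$-subgroups across $G=XY$. First I would reduce to the generic situation. Taking account of the isomorphisms in Section~\ref{notation} we may assume $n\geqslant 4$ is even; the dimension $n=4$ (where the exceptional graph automorphism of $\mathrm{Sp}_4(q)$ produces the extra factorisations of~\cite{LPS1} for $q$ even, and where Lemma~\ref{hupPSp} is unavailable) together with the finitely many pairs $(n,q)$ for which Zsigmondy's theorem~\cite{Zs} fails to produce a primitive prime divisor $t_1$ of $r^{fn}-1$ or $t_2$ of $r^{f(n-2)}-1$, I would settle directly with \textsc{Magma} via Lemma~\ref{le1}. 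For the remaining cases $n\geqslant 6$ and both $t_1$ and $t_2$ exist, and by Lemma~\ref{lem:primeorder} we may assume $x$ and $y$ have prime order.

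First I would locate $x$. Since $t_1$ divides $|L|$ and $G=XY$, after interchanging $x$ and $y$ if necessary I may assume $t_1\mid|X|$. Lemma~\ref{hup0PSp} then forces $x$ into the Singer torus $T$ of $\mathrm{PGSp}_n(q)$ of order $q^{n/2}+1$, and consulting~\cite{LPS1,LPS2} shows that $X$ lies in a maximal factor whose order is divisible by $t_1$, which (once the $q$-even orthogonal subgroups are set aside) is a $\mathcal{C}_3$ field-extension subgroup. Here the parity of $n/2$ is decisive: when $n/2$ is even one checks that $t_1\nmid|\mathrm{GU}_{n/2}(q)|$ but $t_1\mid|\mathrm{Sp}_{n/2}(q^2)|$, so the relevant subgroup is of type $\mathrm{Sp}_{n/2}(q^2).2$; moreover, for $q$ odd and $n/2$ even one has $q^{n/2}\equiv1\pmod4$, whence the index-two subgroup $T\cap\mathrm{PSp}_n(q)$ has odd order and the unique involution of $T$ lies in $\mathrm{PGSp}_n(q)\setminus\mathrm{PSp}_n(q)$. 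Identifying $x$ with this involution gives $X=\cent Gx\supseteq\cent Lx\cong\mathrm{Sp}_{n/2}(q^2).2$ and pins down the three side conditions ``$q$ odd'', ``$n/2$ even'' and ``$\mathrm{PGSp}_n(q)\leqslant G$''. Finally, since $n/2$ is even one verifies that $t_2$ is coprime to $|\mathrm{Sp}_{n/2}(q^2)|$ (as $n-2$ divides none of $4,8,\dots,n$), so that $t_2\nmid|X|$ and hence $t_2\mid|Y|$.

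Next I would locate $y$. By Lemma~\ref{hupPSp}, $t_2\mid|Y|$ forces $y\in\cent{\Aut(L)}{T_2}=(q^{n/2-1}+1)\circ\mathrm{GSp}_2(q)$, so that $Y$ fixes an orthogonal decomposition $V=W\perp W^\perp$ with $W^\perp$ a non-degenerate $2$-space. Among the prime-order elements of this centraliser only a transvection of the $\mathrm{GSp}_2(q)$-factor makes $Y$ large enough to satisfy $G=XY$; the semisimple alternatives (in the torus of order $q^{n/2-1}+1$, or of order dividing $q\pm1$ inside $\mathrm{GSp}_2(q)$), as well as every residual $q$-even configuration in which $X$ is instead an orthogonal subgroup $\mathrm{GO}_n^{\pm}(q)$, are discarded by the inequality $|G|_r\leqslant|X|_r|Y|_r/|X\cap Y|_r$ using the explicit orders of the $\mathcal{C}_3$-subgroup and of $P_1$. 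Thus $y$ is a transvection and $Y\leqslant P_1$, the stabiliser of its direction. It then remains to confirm that this surviving triple is a genuine example: $\mathrm{Sp}_{n/2}(q^2)$ is transitive on the $1$-spaces of $V$, so $G=XP_1$, and one checks $P_1=(X\cap P_1)Y$ through Lemma~\ref{l: smaller}; the entry of $\surd$ in the sixth column is then obtained by verifying $G=\cent Gx\cent Gy$ directly from the explicit centralisers. This places $(G,x,y)$ in Line~$10$.

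The step I expect to be the main obstacle is the elimination carried out for $q$ even together with the small group $\mathrm{Sp}_4(q)$. For $q$ even there is no involution in $\mathrm{PGSp}_n(q)\setminus\mathrm{PSp}_n(q)$, yet the orthogonal subgroups $\mathrm{GO}_n^{\pm}(q)$ supply numerous maximal factorisations that must each be ruled out as normaliser factorisations; and the exceptional graph automorphism of $\mathrm{Sp}_4(q)$ creates factorisations with no analogue in higher dimension. Disentangling these so that exactly the conditions ``$q$ odd'' and ``$n/2$ even'' survive---these being precisely what force the outcome to be Line~$10$ and nothing else---is the delicate part of the argument.
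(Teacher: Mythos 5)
Your overall strategy for $n\geqslant 6$ matches the paper's: use $t_1$ and Lemma~\ref{hup0PSp} to push $x$ into the Singer torus of order $q^{n/2}+1$, use $t_2$ and Lemma~\ref{hupPSp} to push $y$ into $(q^{n/2-1}+1)\circ\mathrm{GSp}_2(q)$, and then eliminate everything but the transvection/\,$\Sp_{n/2}(q^2).2$ configuration by order comparisons against the factorizations in~\cite{LPS1,LPS2}. However, there is a critical gap at the outset: you propose to settle the whole of $n=4$ by \textsc{Magma}. That is the infinite family $\mathrm{Sp}_4(q)$, and it cannot be delegated to a computer; worse, it genuinely contains examples of Line~10 (the case $n/2=2$ even, $q$ odd), so it needs a theoretical argument, not just elimination. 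The paper computes only the finitely many cases $(4,q)$ with $q\in\{2,3,4,8,16,32\}$ and treats general $\mathrm{Sp}_4(q)$ by hand: first bounding $|X|$ by $4f(q^2+1)$ and comparing with the minimal permutation degree $(q^4-1)/(q-1)$ to force $\cent{\Aut(L)}{x}\cong\SL_2(q^2).2f$, then using the explicit shape $E_q^{1+2}:((q-1)\circ\Sp_2(q))$ of the parabolic containing $Y$ to show $y\in\Z Q$ is a transvection; and separately it disposes of the graph-field automorphism case ($x\notin T_x$, $q=2^f$ with $f$ odd, $\cent Lx\cong{}^2B_2(q)$) by a delicate analysis inside $\mathrm{O}_4^+(q)=\SL_2(q)\times\SL_2(q)$. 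None of this is recoverable from your outline.

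A second, smaller gap: for $n\geqslant 6$ you assert that $x$ may be ``identified with'' the unique involution of the Singer torus, but Lemma~\ref{hup0PSp} only places $x$ somewhere in that torus. You must still rule out $x$ of odd prime order, for which $X$ is a field-extension subgroup of type $\mathrm{GU}_{n/\ell}(q^{\ell/2}).\ell$ (this includes the entire case $n/2$ odd, which your write-up never eliminates). The paper does this by decomposing $y=y_{n-2}y_2$, arguing via isomorphism of irreducible submodules of $V_{n-2}$ and $V_2$ that either $Y$ stabilizes a non-degenerate $2$-space (leading to three configurations from~\cite[Tables~1 and~2]{LPS1}, each killed by Sylow $r$-comparisons) or $|y|$ divides $q+1$ or $y$ is a transvection, and then checking against the maximal factorizations that only the $\Sp_{n/2}(q^2)$ type for $X$ survives. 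Your proposal gestures at these eliminations but does not supply the module-theoretic step that makes them work.
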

\begin{proof}
When 
$$(n,q)\in \{(4,2),(4,3), (4,4),(4,8),(4,16),(4,32),(6,2),(6,3),(8,2)\},$$
the proof follows with a computation with the computer algebra system \textsc{Magma}. Now, by excluding these cases, the maximal factorizations of the almost simple groups with socle $L=\mathrm{PSp}_n(q)$ appear in~\cite[Table~1 and~2]{LPS1}. Moreover, by excluding these cases,  we simplify some of the computations later in the proof.

Let $t_1$ be a primitive prime divisor of $r^{nf}-1$: as usual the existence of $t_1$ follows from Zsigmondy's theorem. 
As $t_1$ divides $|L|$, without loss of generality, we may suppose that $t_1$ divides $|X|$. Let $T_1$ be a cyclic subgroup of $X$ of order $t_1$, let $C_1:={\bf C}_{\mathrm{Aut}(\mathrm{PSp}_n(q))}(T_1)$ and let $T_x$ be a maximal torus of $\mathrm{PGSp}_n(q)$ containing $T_1$. Thus $T_x$ is a torus of order $q^{n/2}+1$ and $T_x$ is a cyclic subgroup of $\mathrm{PGSp}_n(q)$. From Lemma~\ref{hup0PSp}, we obtain
\[
g\in C_1=
\begin{cases}
\langle T_x,\iota\rangle,&\textrm{when }n=4, r=2, f \textrm{ is odd, } \iota \textrm{ is a graph-field automorphism},\\
T_x,&\textrm{otherwise}.
\end{cases}
\]

\subsection*{Assume $x\in T_x$.} Thus $x$ is a semisimple element and $X$ is a field extension subgroup of $G$. We now discuss the structure of $X$. Assume first, for simplicity, that $x\in \mathrm{PSp}_n(q)$ or $n/2$ is odd. Thus, using~\cite[Section~$3.4$, Table~B.7]{BGbook} and~\cite[Chapter~4 and Table~$4.5.1$]{GLS}, we see that  the order of $X$ divides 
\begin{equation}\label{eq:c19uu}
f\ell|\mathrm{GU}_{n/\ell}(q^{\ell/2})|2,
\end{equation}
for some divisor $\ell$ of $n$ with $n/\ell$ odd.
When $x\in T_x\setminus\mathrm{PSp}_{n}(q)$ and $n/2$ is even, there are a few more cases to consider. As $T_x\nleq \mathrm{PSp}_{n}(q)$, $q$ is odd. As $n/2$ is even, $q^{n/2}+1\equiv 2\pmod 4$ and hence $T_x=\langle a\rangle\times\langle b\rangle$, where $a$ has order $2$ and $\langle b\rangle=T_x\cap\mathrm{PSp}_n(q)$ has order $(q^{n/2}+1)/2$. Using the references above, we obtain that $${\bf N}_{\mathrm{PGSp}_n(q)}(\langle a\rangle)={\bf C}_{\mathrm{PGSp}_n(q)}(a)\cong \mathrm{Sp}_{n/2}(q^2).2,$$
where the ``2'' on top acts as a field automorphism. Therefore,  when $q$ is odd and $n/2$ is even, we have the following possibilities for the order of $X$:
\begin{equation}\label{eq:c19uubis}
2f|\mathrm{Sp}_{n/2}(q^2)|\quad\textrm{and}\quad f\ell|\mathrm{GU}_{n/\ell}(q^{\ell/2})|,
\end{equation}
for each divisor $\ell$ of $n$ with $n/\ell$ odd.

 Before considering the element $y$ in general, we first consider the case $n=4$. In this case, from~\eqref{eq:c19uu} and~\eqref{eq:c19uubis}, $|X|$ divides either $4f(q^2+1)$ or 
$2f|\SL_2(q^2)|=2fq^2(q^4-1)$. In the first possibility, we have 
\begin{equation}\label{20200401_1}
m(\mathrm{PSp}_4(q))\leqslant |G:Y|\leqslant |X|\leqslant 4f(q^2+1),
\end{equation}
where $m(\mathrm{PSp}_4(q))$ is the minimal degree of a faithful permutation representation of $\mathrm{PSp}_4(q)$.
 Now, $m(\PSp_4(q))=(q^4-1)/(q-1)$, except when $q\in \{2,3\}$. (This information is tabulated, for instance, in~\cite[Table~4]{Guest}.) The 
inequality~\eqref{20200401_1} is satisfied only when $r=2$ and $f\leqslant 3$, or $q=r=3$. However, these cases have been checked with the help of a computer. 
Therefore, we may suppose that $\cent {\Aut(L)}x\cong \SL_2(q^2).2f$. In particular, $q$ is odd  and $x$ is an involution. Observe that $q^2\equiv 1\pmod 4$ and hence $(q^2+1)/2\equiv 1\pmod 2$. Therefore  $x\in \mathrm{PGSp}_4(q)\setminus \PSp_4(q)$ and hence
$\mathrm{PGSp}_4(q)\leqslant G$.  In particular, $|G:X|=q^2(q^2-1)/2$.

From~\cite[Table~8.12]{BHRDbook}, we see that $X=\cent G x$ is a maximal subgroup of $G$. From the classification of the maximal factorizations of almost simple groups~\cite{LPS1,LPS2}, we deduce that $Y$ is contained in a parabolic subgroup $P$ of $G$ whose unipotent radical $Q$ is a non-abelian group of order $q^3$ (this information is in~\cite[(3.2.1a)]{LPS1}). Furthermore, from~\cite[Table~8.12]{BHRDbook}, we get that the shape of $P\cap L$ is $$E_q^{1+2}:((q-1)\circ \mathrm{Sp}_2(q)).$$ As $G=XY$, we deduce that $|G:X|=q^2(q^2-1)/2$ divides $|Y|$. 
Using this description of $P$ it is not hard to see that the only elements $y\in P$ with the property that $\nor P{\langle y\rangle}$ has order divisible by $q^2(q^2-1)/2$ are the elements in $\Z Q\cong E_q$. Therefore, $y$ is a transvection of $\PSp_4(q)$. In particular, we find the examples in Line 10 of Table~\ref{table1} (for $n=4$).

\smallskip

Suppose now that $n\geqslant 6$. Let  $t_2$ be a primitive prime divisor of $r^{f(n-2)}-1$. Observe that $t_2$ does exist because $n\geqslant 6$ and because we are excluding the case $(n,q)=(8,2)$ from our analysis here. From~\eqref{eq:c19uu} and~\eqref{eq:c19uubis}, $|X|$ is relatively prime to $t_2$ and hence $t_2$ divides $|Y|$. Let $T_2$ be a cyclic subgroup of $Y$ of order  $t_2$ and set $C_2:=\cent {\Aut(L)} {T_2}$. From Lemma~\ref{hupPSp}, we obtain
\[
y\in C_2=(q^{\frac{n}{2}-1}+1)\circ\mathrm{GSp}_2(q).
\]
Write $$y=y_{n-2}y_2,$$ where $y_{n-2}$ belongs to the torus of cardinality $q^{n/2-1}+1$ and $y_2$ belongs to $\mathrm{GSp}_2(q)$. Now, this decomposition of $y$ induces a direct sum decomposition of the underlying vector space $V=V_{n-2}\perp V_2$, where $y$ induces $y_{n-2}$ on $V_{n-2}$ and induces $y_2$ on $V_2$. Since $y_{n-2}$ is semisimple, the $\langle y_{n-2}\rangle$-module $V_{n-2}$ is the direct sum of pair-wise isomorphic irreducible modules. 

\smallskip

\noindent\textsc{Suppose that none of these modules is isomorphic to  any of the irreducible $\langle y_2\rangle$-submodules of $V_2$} (here, we are including the possibility that $y_2$ is a non-identity unipotent element and hence $V_2$ is indecomposable with a unique irreducible submodule, which is the trivial module). 

In this case, $\nor G{\langle y\rangle}=Y$ preserves the direct sum decomposition $V_2\perp V_{n-2}$ and hence $Y$ is contained in the stabilizer in $G$ of a $2$-dimensional non-degenerate subspace of $V$. Now, by checking the maximal factorizations of the almost simple group $G$ in~\cite[Table~1 and~2]{LPS1}, we see that one of the following holds:
\begin{enumerate}[label=(\roman*)]
\item\label{sp:sp1} $X\cap L\leqslant \Sp_{n/2}(4).2$, $q=2$, $n/2$ is even, 
\item\label{sp:sp2} $X\cap L\leqslant \Sp_{n/2}(16).2$, $q=4$, $n/2$ is even, $G=\Aut(L)=L.2$,
\item\label{sp:sp3} $X\cap L\leqslant G_2(q)$, $q$ is even and $n=6$.
\end{enumerate}
All of these three cases can be eliminated with a computation. Indeed, 
since $\Sp_{n/2}(4).2$, $\Sp_{n/2}(16).2$ and $G_2(q)$ normalize no non-identity cyclic subgroup, we deduce that
$X\cap L$ must be strictly contained in this embedding. However, by comparing the order of $X$ (see~\eqref{eq:c19uu}), $Y$ and $G$, we see that the equality $G=XY$ cannot be satisfied. These computations can be performed in the same spirit as the analogous computations  for almost simple groups having socle $\mathrm{PSL}_n(q)$ and $\mathrm{PSU}_n(q)$. For instance, in Case~\ref{sp:sp3}, we have $n=6$ and hence, from~\eqref{eq:c19uu}, we have that $|X|$ divides $2f|\mathrm{GU}_3(q)|$; moreover, $|Y|$ divides $|\mathrm{Sp}_2(q)\perp\mathrm{Sp}_4(q)|f$. A computation shows that $|X|_2|Y|_2<|G|_2$, contradicting $G=XY$. We omit the details of the remaining computations for Cases~\ref{sp:sp1} and~\ref{sp:sp2}.

This means that, in order to have a factorization $G=XY={\bf N}_G(\langle x\rangle){\bf N}_G(\langle y\rangle)$, some of the $\langle y_{n-2}\rangle$-irreducible submodules of $V_{n-2}$ are isomorphic to some of the irreducible submodules of $V_2$. As $y_2\in \mathrm{GSp}_2(q)$, this implies that the irreducible $\langle y_{n-2}\rangle$-submodules of $V_{n-2}$ have dimension at most $2$ and hence $y_{n-2}$ has order a divisor of $q+1$.

\smallskip

\noindent\textsc{Suppose that $y=y_2$ is a unipotent element.} Thus $y$ is a transvection of $\PSp_n(q)$ and 
\[
\nor {\mathrm{PGSp}_n(q)}{\langle y\rangle}\cong
E_{q}^{1+(n-2)}:((q-1)\times \mathrm{Sp}_{n-2}(q)).\]
Assume that $X$ is of type $\Sp_{n/2}(q^2)$ (recall from~\eqref{eq:c19uu} and~\eqref{eq:c19uubis} that when this happens, $x$ is an involution and $n/2$ is even). Since $n/2$ is even, $q^{n/2}+1\equiv 2\pmod 4$ and hence $x\in \mathrm{PGSp}_n(q)\setminus\PSp_n(q)$. In particular, we find one of the examples in Line~10 of Table~\ref{table1}. When $X$ is not of type $\Sp_{n/2}(q^2)$, we deduce, by consulting the factorizations of the almost simple groups with socle $L=\PSp_n(q)$ in~\cite{LPS1} and by consulting the structure of $X$ and $Y$, that there are no triples $(G,x,y)$ occurring in this case.

\smallskip

\noindent\textsc{Suppose that $y$ is not a unipotent element.} If $r$ divides $|y|$, then $y_2$ is a non-identity unipotent element. However, this contradicts the fact that some of the $\langle y_{n-2}\rangle$-irreducible submodules of $V_{n-2}$ are isomorphic to some of the irreducible submodules of $V_2$. Therefore $r$ is relatively prime to $|y|$ and hence $y$ is a semisimple element. From the compatibility condition between the $\mathbb{F}_q\langle y_{n-2}\rangle$-submodules of $V_{n-2}$ and the  $\mathbb{F}_q\langle y_2\rangle$-submodules of $V_2$, we obtain $|y|=|y_{n-2}|=|y_2|$ and that $|y|$ is a divisor of $q+1$. This gives that $\cent L y\cong {\hat{}\,}\GU_{n/2}(q)$ or $\cent L y\cong \Sp_{n/2}(q^2)$, depending on whether $n/2$ is odd or even. Using this information on the structure of $X$ and $Y$ and consulting the list of maximal factorizations for $G$ in~\cite[Table~1 and~2]{LPS1}, we deduce that there are no examples arising in this case.

\subsection*{Assume that $x\notin T_x$.} This implies $n=4$, $r=2$ and $f$ is odd. Note that $|T_x|$ is odd while $|x|$ is even. Thus by Lemma \ref{lem:primeorder} we may assume that $x$ is an involution and so is a  
 graph-field automorphism of $\PSp_4(q)=\mathrm{Sp}_4(q)$.  Thus $X\geqslant {\bf N}_L(\langle x\rangle)=\cent L x\cong {^{2}B_2(q)}$ is a Suzuki group. From~\cite[Table~8.14]{BHRDbook}, we deduce that $X$ is a maximal subgroup of $G$. Using the classification of the maximal factorizations of the almost simple groups having socle $\mathrm{Sp}_4(q)$~\cite[Table~2]{LPS1}, we deduce that 
$$Y\cap L\leqslant \mathrm{O}_4^+(q)=\mathrm{SL}_2(q)\times \mathrm{SL}_2(q).$$ 

Suppose that $Y\cap L$ does not contain any of the two simple direct factors of $\mathrm{O}_4^+(q)$. Then $|\mathrm{O}_4^+(q):Y\cap L|\geqslant (q+1)^2$ because the minimal degree of a permutation representation of $\SL_2(q)$ is $q+1$. Therefore $|Y\cap L|\leqslant |\SL_2(q)|^2/(q+1)^2=q^2(q-1)^2$ and hence 
$$|G|\leqslant |X||Y|\leqslant 2f(q^2+1)q^2(q-1)\cdot q^2(q-1)^2|Y:L\cap Y|.$$
As $|G|=|G:L|q^4(q^4-1)(q^2-1)$ and $|G:L|\geqslant |Y:Y\cap L|$, we deduce $$(q+1)^2\leqslant 2f(q-1),$$ which is impossible. Therefore $Y$ contains at least one of the two simple direct factors of $\mathrm{O}_4^+(q)$. Let us denote by $S_1$ and $S_2$ the two simple direct factors of $\mathrm{O}_4^+(q)$. Without loss of generality we assume $S_1\leqslant Y$. Since $\nor G{\langle y\rangle}/\cent G y$ is soluble and since $\SL_2(q)$ is simple, we deduce $S_1\leqslant \cent G y$ and hence $y\in \cent G {S_1}$. Using the action of the outer automorphism group of $L=\mathrm{Sp}_4(q)$, we deduce $\cent G{S_1}\leqslant L$ and hence $y\in \cent L{S_1}$. As $\cent L{S_1}\leqslant \nor L {S_1}\leqslant \mathrm{O}_4^+(q)$, we have $\cent L{S_1}=S_2$ and hence $y\in S_2$. Since the normalizers of the non-identity elements of $S_2\cong\SL_2(q)$ have order $q$, $2(q-1)$ or $2(q+1)$, we deduce $$|L\cap Y|\leqslant |\mathrm{O}_4^+(q)\cap Y|=|\nor {\mathrm{O}_4^+(q)}{\langle y\rangle}|\leqslant  
|S_1|2(q+1)=2q(q^2-1)(q+1).$$ Now, as
$$|G:L|q^4(q^4-1)(q^2-1)=|G|\leqslant |X||Y|\leqslant 2f(q^2+1)q^2(q-1)\cdot 2q(q^2-1)(q+1)|Y:L\cap Y|$$
and $|G:L|\geqslant |Y:L\cap Y|$, we get $q\leqslant 2f$, which is a contradiction. Hence no triple arises in this case.

\smallskip

Using the explicit description of ${\bf N}_G(\langle x\rangle )$, ${\bf N}_G(\langle y\rangle)$ in Line~10, it is readily seen that $G={\bf C}_G(x){\bf C}_G(y)$.  Thus we have the $\surd$ symbol in Line~10.
\end{proof}

\section{Classical groups: odd dimensional orthogonal groups}\label{sec:classicalodd}

The analysis in this section is similar to the work in Section~\ref{sec:classicalpsp}; indeed, from one side, we use the classification of Liebeck, Praeger and Saxl~\cite{LPS1,LPS2} and, from the other side, the factorizations arising in the context of odd dimensional orthogonal groups resemble the factorizations for symplectic groups.
\begin{lemma}\label{casePO}
 If $L=\POmega_n(q)$ with $n$ odd, then $(G,x,y)$ is in Line~$14$ of Table~$\ref{table1}$.
\end{lemma}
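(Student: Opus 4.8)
The plan is to follow closely the treatment of the symplectic groups in Lemma~\ref{casePSp}, exploiting that here $q$ is odd (as $qn$ is odd), that $\SO_n(q)=\Inndiag(L)$ with $|\SO_n(q):\Omega_n(q)|=2$, and that $B_m$ admits no graph automorphism. Throughout I write $m:=(n-1)/2\geqslant 3$. First I would clear away the finitely many pairs $(n,q)$ for which the primitive prime divisors used below fail to exist by Zsigmondy's theorem~\cite{Zs}, together with a handful of further small cases, by a direct \textsc{Magma}~\cite{magma} computation rendered efficient by Lemma~\ref{le1}; and I would use Lemma~\ref{lem:primeorder} to pass to prime-order powers of $x$ and $y$ whenever convenient.

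The first substantive step is to locate $x$. Let $t_1$ be a primitive prime divisor of $r^{f(n-1)}-1$; as $q^m+1$ divides $|L|$ and is divisible by $t_1$, after interchanging $X$ and $Y$ (Lemma~\ref{le1}) I may assume $t_1\mid|X|$. Arguing exactly as in Lemma~\ref{hup0PSp}, and reading off centralizers from~\cite[Chapter~4]{GLS}, I would show that a subgroup $T_1$ of $X$ of order $t_1$ satisfies $\cent{\Aut(L)}{T_1}=T_x$, a maximal torus of order $q^m+1$ that is the Singer cycle of a subgroup $\Omega_{n-1}^-(q)$ stabilizing a non-degenerate $1$-space $\langle v\rangle$ of $V$, and that hence $x\in T_x$. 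The decisive observation, parallel to the symplectic case, is that when $m$ is even, i.e.\ $n\equiv 1\pmod 4$, the cyclic group $T_x$ of order $q^m+1\equiv 2\pmod 4$ has a unique involution $x_0$, which acts as $-1$ on the minus-type hyperplane $v^\perp$ and trivially on $\langle v\rangle$; a determinant and spinor-norm computation then gives $x_0\in\SO_n(q)\setminus\Omega_n(q)$ and $\cent L{x_0}\cong\POmega_{n-1}^-(q).2$, so that $X=\nor G{\langle x_0\rangle}$ is the stabilizer $N_1^-$ of $\langle v\rangle$ and, for $x_0$ to lie in $G$, one needs $\SO_n(q)\leqslant G$. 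Every non-involution element of $T_x$ is a semisimple element lying in a field-extension ($\mathcal{C}_3$) subgroup, and for these I would compare the orders of the Sylow $r$-subgroups of $X$, $Y$ and $G$ against~\cite[Table~1]{LPS1}, as in Lemma~\ref{casePSp}, to conclude that no factorization $G=XY$ arises.

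It remains to identify $y$ once $X=N_1^-$. Since $X$ is the stabilizer of a non-degenerate $1$-space, $G=XY$ forces $Y$ to be transitive on the $G$-orbit of $\langle v\rangle$; consulting the maximal factorizations with socle $\Omega_n(q)$ in~\cite[Table~1]{LPS1} then shows that $Y$ is contained in a parabolic subgroup $P$ stabilizing a totally singular $m$-space $W$ (that is, $P=P_m$). A direct analysis of which elements of $P_m$ have normalizer transitive on that orbit would single out $y$ as the unipotent element of Jordan type $[2^m,1^1]$, whose image $(y-1)V$ is exactly such a $W$ and whose centralizer is $\cent L y\cong E_q^{m(m-1)/2+m}:\Sp_m(q)$; here the partition $[2^m,1^1]$ is a legitimate orthogonal unipotent class, and $\Sp_m(q)$ is defined, precisely when $m$ is even, recovering $n\equiv 1\pmod 4$. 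A final count of the relevant orders confirms that $G=XY$ genuinely holds in this configuration, giving exactly the triples of Line~14, and that $\nor G{\langle x_0\rangle}=\cent G{x_0}$ with $G=\cent G{x_0}\cent G y$, which accounts for the $\surd$ in the sixth column. The cases in which $x$ or $y$ is not of prime order are reduced to the above by passing to a suitable power, exactly as in~\S\ref{subsection23}.

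The step I expect to be the main obstacle is twofold. First, the orthogonal analogues of the centralizer lemmas~\ref{hup0PSp} and~\ref{hupPSp} must be set up with care, since for $n$ odd it is the interplay of determinant and spinor norm on the unique involution of $T_x$ that simultaneously fixes the isomorphism type $\POmega_{n-1}^-(q).2$ of $\cent L{x_0}$ and forces both $x_0\in\SO_n(q)\setminus\Omega_n(q)$ and the parity condition $n\equiv 1\pmod 4$; making this bookkeeping emerge cleanly is delicate. Second, pinning down the unipotent class of $y$, computing $\cent L y$, and verifying that $\nor G{\langle y\rangle}$ is transitive on the non-degenerate $1$-spaces in the $G$-orbit of $\langle v\rangle$ — equivalently, that the order inequality $|X|\,|Y|\geqslant|G|\,|X\cap Y|$ becomes an equality — is where the real work lies, and it is this verification that is most likely to demand the separate treatment of the borderline small-rank and small-field cases mentioned at the outset.
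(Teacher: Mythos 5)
Your proposal aims at the right final configuration (Line~14, with $x$ the involution in $\SO_n(q)\setminus\Omega_n(q)$ stabilising a minus-type non-degenerate $1$-space and $y$ unipotent with ${\bf C}_L(y)\cong E_q^{m(m-1)/2+m}:\Sp_m(q)$, $m$ even), and your route to $x$ --- a primitive prime divisor of $q^{n-1}-1$ forcing $x$ into a torus of order $q^m+1$, then isolating its involution --- is a genuinely different alternative to the paper, which instead deduces from $G=XP_m$ and the classification of groups transitive on totally singular $m$-subspaces (\cite[Theorem~7.1]{GGP}) that $\Omega_{n-1}^-(q)\unlhd X$, so that $x$ is immediately forced to be that involution with $X=N_1^-$. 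However, there are two genuine gaps. First, your assertion that ``consulting the maximal factorizations then shows that $Y$ is contained in a parabolic $P_m$'' is not true as stated: for $n=7$, $13$ and $25$ the tables of \cite{LPS1} also contain the factorizations involving $G_2(q)$ (against $P_1$, $N_1^{\pm}$, $N_2^{\pm}$), $\PSp_6(3^f).a$ against $N_1^-$, and $F_4(3^f)$ against $N_1^-$, so once $X\leqslant N_1^-$ the factor $Y$ could a priori lie in a $G_2$, $\PSp_6$ or $F_4$ subgroup. Eliminating these occupies roughly half of the paper's proof (no proper factorizations of $F_4(3^f)$; an analysis of the factorizations of $G_2(3^f)$ combined with the divisibility of $|A'|$ by $|G:B|$; the non-existence of elements of $\mathrm{Aut}(\PSp_6(q))$ whose normalizer has order divisible by $q^6(q^6-1)/2$), and your sketch never touches them.

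Second, the identification of $y$ inside $P_m$ --- which you yourself flag as ``where the real work lies'' --- is asserted rather than argued. The paper's treatment is the technical core of the lemma: it fixes an explicit matrix form of $\hat{P}_m$ with unipotent radical $Q$, Levi complement $\mathcal{L}\cong\GL_m(q)$ and a subgroup $Z\leqslant {\bf Z}(Q)$; takes a primitive prime divisor $t$ of $r^{fm}-1$ (of $q^m-1$, for the Levi, not your $q^m+1$), proves $y\in{\bf C}_G(T)=T_m\ltimes W$; excludes $y\in T_m$ because $Z\mathcal{L}$ is not transitive on the minus-type non-degenerate $1$-subspaces; and only then concludes that $y\in W$ is unipotent, with $m$ even forced because $W=\{B:B^t=-B,\ \lambda B\lambda^t=B\}$ vanishes for $m$ odd. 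Your plan recovers the parity condition from the combinatorics of orthogonal unipotent classes ($[2^m,1]$ is orthogonal iff $m$ is even), which is consistent with the paper, but without the intermediate steps you have no argument that $y$ is unipotent at all rather than, say, a semisimple element of the Levi. A related caution: the torus $T_x$ is cyclic of even order and so has a unique involution for \emph{every} parity of $m$ (and for $m$ odd, $q\equiv 3\pmod 4$ that involution even lies in $\Omega_n(q)$), so the restriction $n\equiv 1\pmod 4$ cannot be extracted at the $x$-stage as your ``decisive observation'' suggests; it only emerges from the analysis of $y$.
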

\begin{proof}
When $(n,q)=(7,3)$, the proof follows with a computer computation: no example arises. Thus we assume that $(n,q)\neq (7,3)$. Set $m:=(n-1)/2$. We start by summarizing the maximal factorizations $$G=AB$$ of almost simple groups with socle $L=\Omega_n(q)$ (as usual we use the notation from~\cite{LPS1,LPS2}):
\begin{enumerate}
\item\label{casee1} $A\cap L=N_1^-$ and $B\cap L=P_m$,
\item\label{casee2} $n=7$, $A\cap L=G_2(q)$ and $B\cap L$ is either $P_1$, or $N_1^\varepsilon$, or
$N_2^{\varepsilon}$, with $\varepsilon\in \{+,-\}$,
\item\label{casee3} $n=13$, $q=3^f$, $A\cap L=\PSp_6(3^f).a$ with $a\leqslant 2$ and $B\cap L=N_1^-$, 
\item\label{casee4} $n=25$, $q=3^f$, $A\cap L=F_4(3^f)$ and $B\cap L=N_1^-$.
\end{enumerate}
Replacing $X$ by $Y$ if necessary, we may suppose that 
\begin{equation}\label{eq:0987}X\leqslant A\hbox{ and }Y\leqslant B.
\end{equation}

\smallskip

\noindent\textsc{Cases~\eqref{casee2},~\eqref{casee3} and~\eqref{casee4}}

\smallskip

\noindent Here $A$ is an almost simple subgroup of $G$ and hence  $X$ is a core-free proper subgroup of $A$. By Lemma~\ref{l: smaller}, the factorization $G=XY$ gives rise to the factorization 
\begin{equation}\label{eq:anewanew}
A=X(Y\cap A)
\end{equation} of $A$. 

Now, from~\cite[Table~5]{LPS1}, we see that $F_4(3^f)$ admits no proper factorizations. Hence $A\leqslant X$ or $A\leqslant Y\cap A$, which are both impossible.  Therefore Case~\eqref{casee4} does not arise. 

Assume Case~\eqref{casee2}. From~\cite[Table~5]{LPS1}, we see that $G_2(q)$ admits proper factorizations with $q$ odd only when $q=3^{f}$. Observe that $f\geqslant 2$, because we have dealt with $\Omega_7(3)$ above. Let $A=A'B'$ be a maximal factorization of $A$ with $X\leqslant A'$ and with $Y\cap A\leqslant B'$. Using the maximal factorizations of $G_2(q)$, we see that $A'\cap G_2(q)$ is one of the following groups $$\mathrm{SL}_3(q), \,\mathrm{SL}_3(q).2,\, \mathrm{SU}_3(q),\, \mathrm{SU}_3(q).2,\, ^2G_2(q),$$
where, for the last case, we require $f$ odd, but we do not need this information here. From $G=AB=XB$ and~\eqref{eq:anewanew}, we deduce $|G:B|=|A:A\cap B|=|X:X\cap B|$ and hence $|G:B|$ divides $|X|$. Thus 
 \begin{equation}\label{eq:anewanewanew}
 |G:B| \hbox{ divides }|A'|.
 \end{equation}
Now,  
\[|G:B|=
\begin{cases}
q^3\frac{q^3-1}{2}&\textrm{when }L\cap B=N_1^-,\\
q^3\frac{q^3+1}{2}&\textrm{when }L\cap B=N_1^+,\\
q^5\frac{q^6-1}{2(q-1)}&\textrm{when }L\cap B=N_2^+,\\
q^5\frac{q^6-1}{2(q+1)}&\textrm{when }L\cap B=N_2^-,\\
\frac{q^6-1}{q-1}&\textrm{when }L\cap B=P_1.
\end{cases}
\] 
Using this explicit value of $|G:B|$ and using~\eqref{eq:anewanewanew}, we deduce that
\begin{itemize}
\item either $L\cap B=N_1^-$ and $A'\cap G_2(q)\in
 \{\SL_3(q),\SL_3(q).2\}$, or
 \item $L\cap B=N_1^+$ and $A'\cap G_2(q)\in \{\SU_3(q),\SU_3(q).2\}$.
 \end{itemize} As $q=3^f$, we have $\gcd(3,q-1)=\gcd(3,q+1)=1$ and hence $A'$ is an almost simple group with socle $\SL_3(q)$ or $\SU_3(q)$. 
 Recall now that, since $X\leqslant A'$, we have $X=\nor G{\langle x\rangle}=\nor {A'}{\langle x\rangle}$. Now, it is not hard to verify that $\Aut(\SL_3(q))$ and $\Aut(\SU_3(q))$ do not contain a non-identity cyclic subgroup $\langle x\rangle$ whose normalizer has order divisible by $|G:B|\in \{(q^3-1)q^3/2,(q^3+1)q^3/2\}$. Hence Case~\eqref{casee2} does not arise.

The analysis for Case~\eqref{casee3} is similar to Case~\eqref{casee2}, but simpler. The factorization for $\Omega_{13}(3^f)$ arising in Case~\eqref{casee3} is described in detail in~\cite[4.6.3, Lemma~A]{LPS1}.
Observe that $A$ is an almost simple group with socle $\PSp_6(q)$. As $\nor G{\langle x\rangle}=X\leqslant A$, we have $X=\nor A{\langle x\rangle}$. As $G=XB$ and $G=LB$, we deduce that 
$$|L:L\cap B|=|G:B|=|X:X\cap B|.$$ Since $L\cap B=N_1^-$ in Case~\eqref{casee3}, we have  $$|L:L\cap B|=|\Omega_{13}(q):N_1^-|=\frac{(q^6-1)q^6}{2}$$ and hence $(q^6-1)q^6/2$ divides $|X|$. Now, it is not hard to verify, using~\cite[Tables~8.28 and~8.29]{BHRDbook} that $\Aut(\PSp_6(q))$ contains no non-identity group elements $g\ne 1$ with  $|{\bf N}_{\mathrm{Aut}(\mathrm{PSp}_n(q))}(\langle g\rangle)|$ divisible by $q^6(q^6-1)/2$. Hence Case~\eqref{casee3} does not arise.

\smallskip

\noindent\textsc{Case~\eqref{casee1}.}

\smallskip

\noindent Replacing $X$ with $Y$ if necessary, $X\cap L\leqslant N_1^{-}$ and $Y\cap L\leqslant P_{m}$, where $N_1^{-}$ is the stabilizer in $L$ of a $1$-dimensional non-degenerate subspace of ``minus type'' and $P_{m}$ is the stabilizer in $L$ of a totally isotropic subspace of dimension $m$; in particular, $P_m$ is a parabolic subgroup. 

For simplicity, let $A_X$ be the stabilizer in $G$ of a $1$-dimensional non-degenerate subspace of ``minus type'' with $X\leqslant A_X$ and let $B_Y$ be a parabolic subgroup of $G$ with $Y\leqslant B_Y$. It will also be convenient to let $\hat{P}_m$ be a parabolic subgroup of $\mathrm{SO}_n(q)$ with $B_Y\cap L\leqslant \hat{P}_m$.  Thus $$A_X\cap L\cong \Omega_{2m}^-(q).2\,\,\textrm{  and  }\,\,B_Y\cap L\cong E_q^{\frac{m(m-1)}{2}+m}:{\scriptstyle\frac{1}{2}}\mathrm{GL}_{m}(q).$$ Moreover, 
\begin{equation}\label{eq:23456}
|G:A_X|=|L:L\cap A_X|=|\Omega_{2m+1}(q):\mathrm{P}\Omega_{2m}^-(q).2|=\frac{q^m(q^m-1)}{2}.
\end{equation}

Now $G=XP_m$ so $X$ act transitively on the set of all totally isotropic subspaces of dimension $m$. Since $(m,q)\neq (7,3)$ we have from \cite[Theorem 7.1]{GGP} that $\Omega_{2m}^-(q)\unlhd X$. Since $\Omega_{2m}^-(q)$ has trivial centre and is insoluble it follows that $\Omega_{2m}^-(q)\leqslant {\bf C}_G(x)$. Thus $\SO_n(q)\leqslant G$ and $x\in \SO_n(q)\backslash \Omega_n(q)$ is an involution.

We now fix an $\mathbb{F}_q$-basis $e_1,\ldots,e_m,w,f_1,\ldots,f_m$ of $V$ such that the symmetric matrix defining $L=\Omega_{2m+1}(q)$ with respect to this ordered basis  is the matrix
\[
J=
\begin{pmatrix}
0&0&I\\
0&1&0\\
I&0&0
\end{pmatrix},
\]
where we use $I$ to denote the $m\times m$ identity matrix. Using this matrix representation, $\hat{P}_m$ has unipotent radical subgroup
\[
Q=\left\{
\begin{pmatrix}
I&v&B\\
0&1&-v^t\\
0&0&I
\end{pmatrix}\mid v\in \mathbb{F}_q^m, B\in\mathrm{Mat}_{m\times m}(\mathbb{F}_q), B+B^t+vv^t=0
\right\}\cong E_q^{\frac{m(m-1)}{2}+m}.
\]
Moreover, the Levi complement of $Q$ in $\hat{P}_m$ is
\[
\mathcal{L}=\left\{
\begin{pmatrix}
A&0&0\\
0&1&0\\
0&0&(A^{-1})^t
\end{pmatrix}\mid A\in\mathrm{GL}_m(q)
\right\}\cong \mathrm{GL}_m(q).
\]
In what follows we also need the following subgroup
\[
Z=\left\{
\begin{pmatrix}
I&0&B\\
0&1&0\\
0&0&I
\end{pmatrix}\mid B\in\mathrm{Mat}_{m\times m}(\mathbb{F}_q), B^t=-B
\right\}\cong E_q^{\frac{m(m-1)}{2}}.
\]
A simple computation yields that $Z\leqslant \Z Q$.

Let $t$ be a primitive prime divisor of $r^{fm}-1$ and observe that the existence of $t$ is guaranteed by Zsigmondy's theorem, because $r$ is odd and $m\geqslant 3$. By~\eqref{eq:23456}, $t$ divides $|G:A_X|$ and hence $t$ divides $|Y|$. Let $T$ be a cyclic subgroup of $Y$ having order $t$. We claim that 
\begin{equation}\label{eq:98876}
y\in \cent G T.
\end{equation} 
Since $t$ divides $|Y|$ and since $t$ is a primitive prime divisor of $r^{fm}-1$, we deduce that $T\leqslant L$ and hence $T\leqslant L\cap B_Y\cong P_m$. Suppose that $t$ divides $|{\bf N}_G(\langle y\rangle):{\bf C}_G(y)|$. Then, from the faithful action of $T$ on $\langle y\rangle$, we deduce that $|y|$ is divisible by a prime number $p$ with $t\mid p-1$. Set $y':=y^{|y|/p}$. Now, $y'$ is an element of prime order $p$. Moreover, using the fact that $t$ is a primitive prime divisor of $r^{fm}-1$ and that $t\mid (p-1)$, we have $y'\in L\cap Y\leqslant P_m$ and $y'$ is semisimple. Furthermore, $t$ divides $|{\bf N}_G(\langle y'\rangle):{\bf C}_G(y')|$. In particular, $T$ and $y'$ are semisimple elements in $\hat{P}_m$ and hence, using the explicit description of $\hat{P}_m$ above, we deduce that $T$ and $y'$ are both in a Levi complement (which is isomorphic to $\mathrm{GL}_m(q)$) of $\hat{P}_m$. Applying Lemma~\ref{hup0}  to $\mathrm{GL}_m(q)$ yields that it is impossible to have $t$ divides $|{\bf N}_{\mathrm{GL}_m(q)}(\langle y'\rangle):{\bf C}_{\mathrm{GL}_m(q)}(y')|$. Since $t$ divides $|Y|=|{\bf N}_G(\langle y\rangle)|$, we get $T\leqslant {\bf C}_G(g)$. Therefore,~\eqref{eq:98876} holds true.

From~\eqref{eq:98876}, we deduce $y\in \mathrm{SO}_n(q)$ and hence 
\begin{equation}\label{eq:98877}
y\in \hat{P}_m.
\end{equation}
Let 
\[
m_t=
\begin{pmatrix}
\lambda &0&0\\
0&1&0\\
0&0&(\lambda^{-1})^t
\end{pmatrix}
\]
be a generator of $T$.  
Now, using the explicit description of $Q$ and $\mathcal{L}$ above, we see that $\cent{\hat{P}_m}T=T_m\ltimes W$, where $T_m$ is a torus in $\mathcal{L}\cong \mathrm{GL}_m(q)$ of cardinality $q^m-1$ and
\[
W=\left\{
\begin{pmatrix}
I&0&B\\
0&1&0\\
0&0&I
\end{pmatrix}\mid B\in\mathrm{Mat}_{m\times m}(\mathbb{F}_q), B+B^t=0, \lambda B\lambda^t=B
\right\}.
\]
From~\eqref{eq:98876} and~\eqref{eq:98877}, we have $y\in T_m\ltimes W$.

We claim that 
\begin{equation}\label{eq55555}
y\in W.
\end{equation}
We argue by contradiction and we suppose that $y\notin W$. Then, replacing $y$ by a suitable power, we may assume that $y$ has prime order and $y\in T_m$. Using the explicit description of $\hat{P}_m$, it can be deduced that
\[
Y\cap \mathrm{SO}_n(q)=\nor {\mathrm{SO}_n(q)}{\langle y\rangle}=\nor {\hat{P}_m}{\langle y\rangle}\subseteq Z\mathcal{L}.
\]
However, $Z\mathcal{L}$ is not transitive on the non-degenerate $1$-dimensional subspaces of ``minus type'' and hence $A_XY\ne G$, which is a contradiction because in Case~\eqref{casee1} $Y$ does act transitively on the set of $1$-dimensional non-degenerate subspaces of ``minus type''. Therefore, we must have $y\in W$ and $y$ is a unipotent element of order $r$. Therefore,~\eqref{eq55555} holds true.

 Now, it can be shown  that the set 
$$\{B\in \mathrm{Mat}_{m\times m}(q)\mid B^t=-B, \lambda B\lambda^t=B\}$$
contains a non-zero matrix only when $m$ is even. Thus $m$ is even. Moreover, from~\cite[Table~B.12]{BGbook}, we have
\begin{equation}\label{eqLy}\cent {L}y\cong E_q^{\frac{m(m-1)}{2}+m}: \mathrm{Sp}_m(q).
\end{equation}
Thus we obtain the examples in Line~14 of Table~\ref{table1}.

%
%

\smallskip 

Using the explicit description of ${\bf N}_G(\langle x\rangle )$, ${\bf N}_G(\langle y\rangle)$ in Line~14, it is readily seen that $G={\bf C}_G(x){\bf C}_G(y)$.  Thus we have the $\surd$ symbol in Line~14.
\end{proof}

\section{Classical groups: even dimensional orthogonal groups having Witt defect $1$}\label{sec:classicaloeven-}

We begin with the following lemma.

\begin{lemma}\label{casePo--}
Let $m\geqslant 5$ be odd, let $n:=2m$ and let $g\in \mathrm{Aut}(\mathrm{P}\Omega_n^-(q))$ with $g\ne 1$ and with $|{\bf N}_{\mathrm{Aut}(\mathrm{P}\Omega_n^-(q))}(\langle g\rangle)|$ divisible by $$
q^{\frac{m(m-1)}{2}}(q^{m-1}+1)(q^{m-2}-1)\cdots (q^2+1)(q-1)
.$$ Then $g$ is an involution not in $\mathrm{P}\Omega_n^-(q)$ and
\[
{\bf C}_{\mathrm{P}\Omega_n^-(q)}(g)\cong
\begin{cases}
\mathrm{Sp}_{n-2}(q)&\textrm{when }q \textrm{ is even},\\
\Omega_{n-1}(q)&\textrm{when }q \textrm{ is odd}.\\
\end{cases}
\]
\end{lemma}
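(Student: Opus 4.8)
The plan is to imitate the locating lemmas already established (for instance Lemma~\ref{hup0PSp}): I would extract a primitive prime divisor to pin down the action of $g$ on the natural module $V=\mathbb{F}_q^{n}$, and then use the power of $r$ dividing the displayed number, together with the centralizer information in~\cite[Chapter~4]{GLS} and~\cite[Section~3.5]{BGbook}, to force $g$ to be the claimed involution. Throughout write $N$ for the displayed number. First I would choose a primitive prime divisor $t$ of $r^{2f(m-1)}-1=q^{2(m-1)}-1$, which exists by Zsigmondy since $2(m-1)\geq 8$. As $t$ divides $q^{m-1}+1$ it divides $N$, hence divides $|\nor {\mathrm{Aut}(L)}{\langle g\rangle}|$; pick a cyclic subgroup $T$ of order $t$ inside it. Running the usual argument (as in the proof of Lemma~\ref{hup0PSp}), either $g\in\cent {\mathrm{Aut}(L)}{T}$, or $t$ divides $|\nor {\mathrm{Aut}(L)}{\langle g\rangle}:\cent{\mathrm{Aut}(L)}{g}|$, which divides $\varphi(|g|)$; in the latter case $|g|$ is divisible by a prime $p$ with $t\mid p-1$ and $p\neq r$, the $p$-part $s$ of $g$ is a nontrivial semisimple element whose eigenspaces are permuted nontrivially by $T$, and since $t$ is a primitive prime divisor of $q^{2(m-1)}-1$ this forces $\cent L s$ to be a torus-like group with order coprime to $r$ up to Weyl-group and outer contributions. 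Consequently $|\nor {\mathrm{Aut}(L)}{\langle g\rangle}|_r$ is far smaller than $q^{m(m-1)/2}=r^{fm(m-1)/2}$, contradicting $N\mid|\nor {\mathrm{Aut}(L)}{\langle g\rangle}|$. Thus $g$ centralizes $T$.

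Having placed $g$ in $\cent {\mathrm{Aut}(L)}{T}$, I would read off from~\cite[Chapter~4]{GLS} (or~\cite[Section~3.5]{BGbook}) that $T$ lies in a cyclic maximal torus of order $q^{m-1}+1$ acting irreducibly on a nondegenerate $(n-2)$-dimensional subspace $U$ of minus type and trivially on $W:=U^{\perp}$, a nondegenerate plus-type $2$-space (the Witt types multiply to give minus type on $V$), and that $\cent {\mathrm{Aut}(L)}{T}$ preserves the decomposition $V=U\perp W$. Hence $g=g_U g_W$ with $g_U$ acting on $U$ and $g_W$ on $W$. Since $g_U$ commutes with the irreducible $T$, Schur's lemma places $g_U$ in the field $\mathbb{F}_{q^{2(m-1)}}$ acting on $U$; if $g_U$ were not scalar then $\cent {\mathrm{O}(U)}{g_U}$ would again be a group of small (Weyl-bounded) $r$-part, and comparing with the power $q^{m(m-1)/2}$ forced by $N$ rules this out. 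A finer comparison of the $r$-part of $\cent L g$ with the exact power of $r$ in $N$ — using that among the surviving candidates the centralizer of the target involution has the largest possible $r$-part — then eliminates every $g$ acting nontrivially on $U$ and every $g$ with a nontrivial unipotent part, leaving only an involution $g$ acting trivially on $U$ and nontrivially on the $2$-space $W$. A spinor-norm computation (for $q$ odd) or a Dickson-invariant computation (for $q$ even) shows $g\notin\POmega_n^-(q)$, and reading off the involution-centralizer tables gives $\cent {\POmega_n^-(q)}{g}\cong\Sp_{n-2}(q)$ when $q$ is even and $\cong\Omega_{n-1}(q)$ when $q$ is odd, as required.

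The main obstacle is the decisive exclusion step of the second paragraph. The elements that most stubbornly survive the single primitive prime divisor are the reducible semisimple elements acting trivially on $U$ but as a nontrivial element on $W$ (together with elements having a nontrivial unipotent part), and ruling these out is an $r$-part comparison in which the relevant powers of $r$ are genuinely close. This is precisely where one must exploit the full strength of $N$ — the exact exponent $\tfrac{m(m-1)}{2}$ and the entire product $(q^{m-1}+1)(q^{m-2}-1)\cdots(q^2+1)(q-1)$, rather than a single prime divisor — so the careful bookkeeping of these orders is the heart of the argument. A secondary technical point is the even-characteristic analysis, where $g$ must be identified as the orthogonal transvection (as opposed to another $b_\ell$- or $c_\ell$-type involution) in order to obtain $\cent L g\cong\Sp_{n-2}(q)$.
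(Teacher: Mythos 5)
Your route is genuinely different from the paper's: the paper does not use primitive prime divisors for this lemma at all, but instead bounds $|{\bf N}_{\mathrm{Aut}(L)}(\langle g\rangle):{\bf C}_L(g)|$ directly (by $|\mathrm{Out}(L)|$ times $r-1$ or $n$ when $g$ has prime order, with a separate reduction of composite-order elements to elements of order $4$ and a passage to ${\bf C}_A(g^2)/\langle g^2\rangle$), and then inspects the centralizer orders tabulated in \cite[Section~3.5]{BGbook}. Your first paragraph, which forces $g$ into ${\bf C}_{\mathrm{Aut}(L)}(T)$ for $T$ of primitive-prime-divisor order dividing $q^{m-1}+1$, is sound and mirrors the paper's Lemmas~\ref{hup0}--\ref{hupPSp}.

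The gap sits exactly at the step you yourself flag as decisive, and it is not a matter of more careful bookkeeping: the reducible elements supported on the $2$-space $W$ cannot be eliminated by any comparison of orders, because the displayed number $N$ genuinely divides their normalizer order. Take $q$ odd and $g=1_U\perp(-1_W)$, trivial on the $(n-2)$-dimensional minus-type space $U$ and equal to $-1$ on the plus-type $2$-space $W$. Then ${\bf C}_{\Omega_n^-(q)}(g)=(\mathrm{O}_2^+(q)\times\mathrm{O}_{n-2}^-(q))\cap\Omega_n^-(q)$ has order $(q-1)\,q^{(m-1)(m-2)}(q^{m-1}+1)\prod_{i=1}^{m-2}(q^{2i}-1)$, and the quotient of this by $N$ equals $q^{(m-1)(m-4)/2}(q^2-1)\prod_{i=2}^{m-2}\bigl(q^i-(-1)^i\bigr)$, an integer for every odd $m\geqslant 5$; for instance, for $(m,q)=(5,3)$ the centralizer has order $3^{12}\cdot 2^{12}\cdot 5\cdot 7\cdot 13\cdot 41$ while $N=3^{10}\cdot 2^{4}\cdot 5\cdot 13\cdot 41$. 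Likewise, for $q=4$ the order-$3$ generator of $\Omega_2^+(4)$ has normalizer containing the full stabilizer $N_2^+$ of $W$, again of order divisible by $N$ --- this is precisely the element responsible for Line~12 of Table~\ref{table1}. So the ``finer comparison of the $r$-part'' (or of any part) that your plan relies on has no chance of succeeding for these candidates: they satisfy the divisibility hypothesis outright, and in the paper's application they are ultimately excluded not by order considerations but by the classification of maximal factorizations (which rules out $X\leqslant N_2^+$ except when $q=4$). Until you supply an argument of a different nature for the elements with $g_U=1$ and $g_W\neq \pm 1_W$ (and, for $q$ odd, for $g_W=-1_W$), the proof is incomplete; the examples above indicate that the conclusion cannot be recovered from the stated order-divisibility hypothesis alone, so you should expect to need additional input at this point rather than sharper estimates.
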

\begin{proof}
Set $v:=q^{m(m-1)/2}(q^{m-1}+1)(q^{m-2}-1)\cdots (q^2+1)(q-1)$, $L:=\mathrm{P}\Omega_n^-(q)$ and $A:=\mathrm{Aut}(\mathrm{P}\Omega_n^-(q))$. The proof follows by an inspection of Section~3.5 and Tables~B.11,~B.12 in~\cite{BGbook}. We give some details to make this inspection more elementary.

Suppose first that $g$ has prime order. Assume also that $g\in L$. Now, 
\begin{align}\label{thursday}
|{\bf N}_A(\langle g\rangle):{\bf C}_L(g)|&=|{\bf N}_A(\langle g\rangle):{\bf N}_L(\langle g\rangle)||{\bf N}_L(\langle g\rangle):{\bf C}_L(g)|.
\end{align}
The first factor on the right hand side of~\eqref{thursday} divides $|\mathrm{Out}(L)|$. Observe that the second factor on the right hand side of~\eqref{thursday} divides $r-1$ when $g$ is unipotent (because $|g|=r$ and $\varphi(r)=r-1$) and divides $n$ when $g$ is semisimple (because ${\bf N}_L(\langle g\rangle)/{\bf C}_L(g)$ acts by permuting the eigenspaces of $g$). Therefore $|{\bf C}_L(g)|$ is divisible by $v/\ell$, where $\ell:=\ell_1\ell_2$, $\ell_1:=\gcd(v,|\mathrm{Out}(L)|)\leqslant 8f$ and $\ell_2\leqslant r-1$ when $g$ is unipotent and $\ell_2\leqslant 2m$ when $g$ is semisimple. Now, using~\cite[Section~3.5]{BGbook}, a case-by-case analysis shows that there is no $g$ having centralizer divisible by such a large number.

 Assume that $g\notin L$.
Let $h\in L\cap {\bf N}_A(\langle g\rangle)$. Then, $g^h=g^i$, for some $1\leqslant i\leqslant |g|-1$. Now, $$h^{-1}ghg^{-1}=g^{i-1}\in L\cap \langle g\rangle=1$$ and hence $i=1$. This shows that $L\cap {\bf N}_A(\langle g\rangle)={\bf C}_L(g)$. Therefore
$$|{\bf N}_A(\langle g\rangle):{\bf C}_L(g)|=|{\bf N}_A(\langle g\rangle)L:L|$$
and hence $|{\bf N}_A(\langle g\rangle):{\bf C}_L(g)|$ divides $|\mathrm{Out}(L)|$.  Therefore $|{\bf C}_L(g)|$ is divisible by $v/\ell$, where $\ell:=\gcd(v,|\mathrm{Out}(L)|)\leqslant 8f$. Now, using~\cite[Section~3.5]{BGbook} and the notation therein, we see that the only elements having prime order with $g\notin L$ and having centralizer divisible by such a large number are conjugate to $\gamma_1$ when $q$ is odd and to $b_1$ when $q$ is even. Moreover, the structure of ${\bf C}_L(g)$ is discussed in~\cite[Section~3.5.2]{BGbook} when $q$ is odd and in~\cite[Section~3.5.4]{BGbook} when $q$ is even. The proof of the lemma follows in this case. 

Suppose now that $g$ does not have prime order. We need to show that no extra case arises. Observe that from the previous part of the proof, $g$ has order a power of $2$. Without loss of generality, replacing $g$ by $g^{|g|/4}$ if necessary, we may suppose that $g$ has order $4$. Observe that $g^2$ is $A$-conjugate to $\gamma_1$ when $q$ is odd and to $b_1$ when $q$ is even. Set $\bar{A}:={\bf C}_A(g^2)/\langle g^2\rangle$ and adopt the ``bar'' notation for the projection of ${\bf C}_A(g^2)$ onto $\bar{A}$. We have
\[
\bar{A}\cong
\begin{cases}
\mathrm{Aut}(\mathrm{Sp}_{n-2}(q))&\textrm{when }q \textrm{ is even},\\
\mathrm{Aut}(\Omega_{n-1}(q))&\textrm{when }q \textrm{ is odd}.\\
\end{cases}
\]
Moreover, $\overline{{\bf N}_A(\langle g\rangle)}={\bf C}_{\bar{A}}(\bar{g})$. This shows that ${\bf C}_{\bar{A}}(\bar{g})$ has order divisible by $v/2$. When $q$ is odd, we may apply~\cite[Section~3.5]{BGbook} to the odd dimensional orthogonal group $\Omega_{n-1}(q)$ and we see that $\mathrm{Aut}(\Omega_{n-1}(q))$ contains no involutions whose centralizer has order divisible by $v/2$. Similarly,  when $q$ is even, we may apply~\cite[Section~3.4]{BGbook} to the symplectic group $\mathrm{Sp}_{n-2}(q)$ and we see that $\mathrm{Aut}(\mathrm{Sp}_{n-2}(q))$ contains no involutions whose centralizer has order divisible by $v/2$ (to check this it is useful to recall that $n-2=2m-2\geqslant 8$).
\end{proof}

We are also going to need \cite[Lemma 4.4]{LPSregsubs} that lists all possibilities of $\GammaO^-_{2m}(q)$ that act transitively on an orbit of $\Omega_{2m}(q)$ on nonsingular 1-subspaces. However, it is not claimed there that all groups listed are actually transitive. We rule out two possibilities with the following lemma.

\begin{lemma}\label{lem:ruleout}
 Let $Y\leqslant\GammaO_{2m}^-(2)$ such that $m\equiv 2\pmod 4$ and either  $\SU_{m/4}(2^4)$ or $\Omega_{m/2}^-(2^4)$ is normal in $Y$.  Then $Y$ does not act transitively on the set of nonsingular 1-subspaces. 
\end{lemma}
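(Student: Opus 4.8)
The plan is to obstruct transitivity by exhibiting a prime that must divide $|Y|$ yet cannot divide the order of the normal subgroup $N_0\in\{\SU_{m/4}(2^4),\,\Omega_{m/2}^-(2^4)\}$. First I would note that, writing $V=\mathbb{F}_2^{2m}$ for the natural module carrying the minus-type quadratic form $Q$, a nonsingular $1$-subspace is simply a nonzero vector $v$ with $Q(v)=1$, and a standard count gives their number as
\[
N=2^{2m-1}+2^{m-1}=2^{m-1}(2^m+1).
\]
If $Y$ were transitive on this set then $N\mid|Y|$, so in particular the odd part $2^m+1$ divides $|Y|$. Since $m\geqslant 6$ (otherwise $N_0$ degenerates) we have $2m\geqslant 12$, so Zsigmondy's theorem~\cite{Zs} supplies a primitive prime divisor $t$ of $2^{2m}-1$; as $t\nmid 2^m-1$ we get $t\mid 2^m+1$, and hence $t\mid|Y|$.

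Next I would force $t$ into $N_0$. The quotient $Y/N_0$ embeds into the normaliser quotient $\nor{\GammaO_{2m}^-(2)}{N_0}/N_0$, which is built from diagonal, graph and field automorphisms and hence has only small prime divisors; the one contribution that is not manifestly tiny, the diagonal factor $\gcd(m/4,17)$ in the unitary case, cannot equal $t$ since $\ord_{17}(2)=8\neq 2m$. As $t\equiv 1\pmod{2m}$ forces $t>2m$, every prime divisor of $|Y:N_0|$ is smaller than $t$, so $t\mid|N_0|$. By Cauchy's theorem $N_0$ contains an element $g$ of order $t$. Because $\ord_t(2)=2m=\dim_{\mathbb{F}_2}V$ and $g$ acts faithfully, $V$ is an irreducible $\mathbb{F}_2\langle g\rangle$-module; consequently $\mathrm{End}_{\langle g\rangle}(V)\cong\mathbb{F}_{2^{2m}}$ and $g$ is an irreducible (Singer-type) semisimple element of $\GL(V)$.

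The contradiction then comes from the field of definition of $N_0$, and I would split into the two cases. In the unitary case $N_0=\SU_{m/4}(2^4)$ preserves an $\mathbb{F}_{2^8}$-vector-space structure on $V$, so $g$ is $\mathbb{F}_{2^8}$-linear and $\mathbb{F}_{2^8}\subseteq\mathrm{End}_{\langle g\rangle}(V)\cong\mathbb{F}_{2^{2m}}$; this needs $8\mid 2m$, whereas $m\equiv 2\pmod 4$ gives $2m\equiv 4\pmod 8$, a contradiction. In the orthogonal case $N_0=\Omega_{m/2}^-(2^4)$ the field $\mathbb{F}_{2^4}$ does embed in $\mathbb{F}_{2^{2m}}$ (as $4\mid 2m$), and $g$ acts $\mathbb{F}_{2^4}$-irreducibly on the nondegenerate orthogonal space $V$ of $\mathbb{F}_{2^4}$-dimension $m/2$. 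Here I would invoke the elementary fact that an element of an orthogonal group acting irreducibly on its underlying nondegenerate space has even-dimensional underlying space: since $g$ preserves the form its eigenvalues pair as $\lambda,\lambda^{-1}$, so the (irreducible) minimal polynomial of $g$ is self-reciprocal, and a self-reciprocal irreducible polynomial of degree greater than $1$ has even degree. As $m\equiv 2\pmod 4$ makes $m/2$ odd and greater than $1$, this is impossible. In either case $t\nmid|N_0|$, contradicting $t\mid|Y|$, so $Y$ is not transitive.

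The main obstacle is the orthogonal case: there the crude field-degree obstruction disappears, because $\mathbb{F}_{2^4}$ genuinely embeds in $\mathbb{F}_{2^{2m}}$, and one must instead exploit the parity forced on the irreducible constituent by the quadratic form. A secondary technical point is the verification that $t$ is coprime to $|Y:N_0|$, that is, that the diagonal, graph and field contributions to $\nor{\GammaO_{2m}^-(2)}{N_0}/N_0$ are all coprime to the primitive prime divisor $t$, together with the disposal of the finitely many small values of $m$ for which $N_0$ degenerates; these can be handled directly or by the computational checks already employed elsewhere in the paper.
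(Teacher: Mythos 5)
Your plan is to exhibit a prime that must divide $|Y|$ (via transitivity and the count $2^{m-1}(2^m+1)$ of nonsingular points) but cannot divide $|N_0|$, and that second step is false for the groups this lemma is actually about. Note first that the congruence $m\equiv 2\pmod 4$ in the statement is evidently a slip: for $\SU_{m/4}(2^4)$ and $\Omega_{m/2}^-(2^4)$ to be defined inside $\GammaO_{2m}^-(2)$ one needs $4\mid m$ (the natural $\mathbb{F}_2$-module of $\SU_a(2^4)$ has dimension $8a$, and a minus-type form over $\mathbb{F}_{16}$ in characteristic $2$ lives in even dimension $m/2$), and the paper's own proof works with $V$ as an $(m/2)$-dimensional $\mathbb{F}_{16}$-space carrying a minus-type form $P$, i.e.\ with $m/2$ even. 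Both of your final contradictions use the literal congruence in an essential way ($8\nmid 2m$ in the unitary case, $m/2$ odd in the orthogonal case), so all they establish is that the hypotheses, read literally, cannot be satisfied; they do not touch the groups the lemma is meant to exclude.

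For those groups there is no order-theoretic obstruction at all. With $m/2$ even, $\Omega_{m/2}^-(2^4)$ contains a cyclic maximal torus of order $(2^4)^{m/4}+1=2^m+1$, and with $m/4$ odd, $|\SU_{m/4}(2^4)|$ is likewise divisible by $(2^4)^{m/4}+1=2^m+1$; hence every primitive prime divisor $t$ of $2^{2m}-1$ divides $|N_0|$, and $N_0$ in fact contains elements of order $t$ acting irreducibly on $V$ over $\mathbb{F}_2$ --- exactly the Singer-type elements you construct. One can check further that $2^{m-1}(2^m+1)$ divides $|\nor{\GammaO_{2m}^-(2)}{N_0}|$, so no counting argument of this shape can succeed. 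The genuine obstruction is to the orbit structure, which is how the paper proceeds: writing $Q=\mathrm{Tr}_{K\to k}\circ P$ with $k=\GF(2)$, $K=\GF(2^4)$ and $P$ a minus-type $K$-valued form preserved by $N_0$, the set $\{v\mid Q(v)=1\}$ is the union of the eight equal-sized fibres $P^{-1}(\alpha)$ with $\mathrm{Tr}_{K\to k}(\alpha)=1$, and $Y$ permutes these fibres only through the Galois group of $K/k$, which has order $4$; hence $Y$ has at least two orbits. An argument of this kind, exploiting the refined $K$-valued form rather than group orders, is needed, and your proposal cannot be repaired along its current lines.
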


\begin{proof}
Note that $\SU_{m/4}(2^4)\leqslant\Omega_{m/2}^-(2^4)\leqslant \GammaO_{2m}^-(2)$ and so it suffices to show that $Y:=N_{\GammaO_{2m}^-(2)}(\Omega_{m/2}^-(2^4))$ is not transitive on the set of nonsingular 1-spaces.   
Let $k=\GF(2)$, $V=k^{2m}$ and $Q$ be a nondegenerate quadratic form on $V$ of ``minus type''. Let $\Delta=\{v\in V\mid Q(v)=1\}$, which corresponds to the set of all nonsingular 1-subspaces of $V$. Consider $V$ as a $m/2$-dimensional vector space over $K=\GF(2^4)$. Following \cite[p59]{LPS1}, let $P:V\rightarrow K$ be a nondegenerate quadratic form on $V$ of ``minus type'' such that $Q=\mathrm{Tr}_{K\rightarrow k} \circ P$, that is, $Q(v)=P(v)+P(v)^2+P(v)^{4}+P(v)^{8}$ for each $v\in V$. Note that $\Delta=\{v\in V\mid P(v)+P(v)^2+P(v)^{4}+P(v)^{8}=1\}$. Now arguing as in \cite[p59]{LPS1} we have that  $Y=\langle \Omega^-_{m/2}(2^4), \phi\rangle$ where $\phi:V\rightarrow V$ has order 8 and $P(v^\phi)=P(v)^\tau$ where $\tau$ is a generator of $\Aut(\GF(2^4))$. 

Let $v\in V$ such that $P(v)\neq 0$. Then $P(\langle v\rangle_K)=K$. Since $\mathrm{Tr}_{K\rightarrow k}$ is $k$-linear, its kernel has size $8$ and so there are precisely $8$ elements $w\in \langle v\rangle_K$ such that $Q(w)=1$. Since the isometry group of $P$ has index 4 in $Y$ it follows that $Y$ is not transitive on $\Delta$. 
\end{proof}

\begin{lemma}\label{casePO-}
 If $L=\POmega_n^-(q)$, then $(G,x,y)$ is in Line~$11$,~$12$ or~$13$ of Table~$\ref{table1}$.
\end{lemma}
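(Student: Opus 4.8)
The plan is to follow the template of the odd-dimensional case (Lemma~\ref{casePO}), combining the maximal factorizations of \cite{LPS1,LPS2} with the two structural lemmas just proved. Throughout write $m:=n/2$. First I would dispose of the finitely many small pairs $(n,q)$ by a \textsc{Magma} computation exploiting Lemma~\ref{le1}; this also secures, via Zsigmondy's theorem, the primitive prime divisors used below. For the remaining cases I would read off from \cite[Table~4]{LPS1} (and \cite[Table~1]{LPS2} when a factor contains $L$) the possible factorizations $G=XY$. Up to interchanging $X$ and $Y$, I expect one factor, say $Y$, to lie in a maximal subgroup $B_Y$ transitive on the nonsingular $1$-subspaces of a fixed type, with $X$ lying in the stabilizer of such a subspace (an $N_1$ subgroup), or, in the exceptional $q=4$ configuration, in an $N_2$ subgroup. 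By the Frattini-type argument of Lemma~\ref{l: smaller}, $Y=\nor G{\langle y\rangle}$ then acts transitively on these nonsingular $1$-subspaces.

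Next I would pin down $y$. Feeding this transitivity into the classification of transitive groups on nonsingular $1$-subspaces---\cite[Theorem~7.1]{GGP} together with \cite[Lemma~4.4]{LPSregsubs}---and discarding the two spurious $\mathbb{F}_{2^4}$-candidates by Lemma~\ref{lem:ruleout}, I would conclude that $Y$ has a normal subgroup of unitary type. When $m$ is odd this is $\GU_m(q)$, forcing $\langle y\rangle$ into its cyclic centre, so that $|y|$ divides $q+1$, $y$ has no eigenvalue in $\mathbb{F}_q$, and $\cent L y\cong\GU_m(q)$; the order-$3$ and order-$5$ torus elements available over $\mathbb{F}_4$ produce the parallel family of Line~12. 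When $m\equiv2\pmod4$ the normal subgroup is $\GU_{m/2}(q^2)$, forcing $|y|=q^2+1$ and $\cent L y\cong\GU_{m/2}(q^2)$. The congruences on $m$ are exactly those making the trace form of the ambient Hermitian space of minus type, so that $m\equiv0\pmod4$ yields no transitive unitary subgroup and hence no factorization.

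I would then identify $x$. Since $|Y|$ is comparatively small, a comparison of $|X|$, $|Y|$ and $|G|$ via $|G|=|X||Y|/|X\cap Y|$ forces $|X|=|\nor G{\langle x\rangle}|$ to be divisible by the quantity $v$ appearing in Lemma~\ref{casePo--}. For $m\geqslant5$ odd, that lemma identifies $x$ as the graph (reflection-type) involution with $\cent L x\cong\Omega_{n-1}(q)$ for $q$ odd and $\cent L x\cong\Sp_{n-2}(q)$ for $q$ even, giving Line~11; a sharper order comparison in the $q=4$ subcase isolates the additional Line~12 examples (where $x$ instead normalises an order-$3$ torus inside an $N_2$ subgroup). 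For $m\equiv2\pmod4$ I would carry out the analogous centralizer and order estimate directly to show that $x$ is the involution in $\SO_n^-(q)\setminus\Omega_n^-(q)$ with $\cent{\Omega_n^-(q)}x\cong\Sp_{n-2}(q)$ and that transitivity of the unitary factor restricts $q$ to $\{2,4\}$, landing in Line~13. Finally, for each surviving triple I would confirm that $G=XY$ is a genuine factorization and decide whether $G=\cent G x\cent G y$, thereby filling in the sixth column (recording $\surd$ in Lines~11 and~13).

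I expect the principal obstacle to be the case $m\equiv2\pmod4$ (Line~13). There Lemma~\ref{casePo--} is unavailable, being stated only for $m$ odd, so the identification of $x$ and the sharp restriction $q\in\{2,4\}$ must be established by hand; and separating the genuine $\GU_{m/2}(q^2)$-factorizations from the false transitive candidates $\Omega_{m/2}^-(2^4)$ and $\SU_{m/4}(2^4)$ is precisely the role played by Lemma~\ref{lem:ruleout}.
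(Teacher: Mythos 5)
Your outline follows essentially the same route as the paper: reduce to the maximal factorizations of \cite{LPS1,LPS2}, use Lemma~\ref{casePo--} to force $x$ to be the reflection-type involution with centralizer $N_1$ when $B\cap L$ is of type $\GU_m(q)$ with $m$ odd, use \cite[Lemma~4.4]{LPSregsubs} together with Lemma~\ref{lem:ruleout} to pin down the unitary normal subgroup of $Y$, and treat the $m\equiv 2\pmod 4$ configuration (Line~13) by hand via the $B=(B\cap N_1)Y$ factorization and a primitive-prime-divisor argument ruling out $X<N_1$; you have also correctly identified that the absence of Lemma~\ref{casePo--} for even $m$ is the main obstacle there.

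One concrete error: you propose to record $\surd$ in Lines~11 \emph{and}~13, but in Table~\ref{table1} only Line~11 carries the $\surd$. For Line~13 one has $\cent G x\cent G y<G$: by \cite[3.5.2(c)]{LPS1} the subgroup of $\nor G{\langle y\rangle}$ that is transitive on the relevant class must involve the field-automorphism part of $\GU_{m/2}(q^2).4f$, and those elements normalize but do not centralize $y$, so $\cent G y$ alone is not transitive on $\langle x\rangle^G$. The same mechanism is why Line~12 also fails to get a $\surd$. This does not affect the classification of the triples $(G,x,y)$, but it would propagate a wrong entry into the sixth column of Table~\ref{table1} and hence into the second assertion of Theorem~\ref{CASE 1}.
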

\begin{proof}
When $(n,q)=(8,2)$, the proof follows with a computer computation with the computer algebra 
system \textsc{Magma}: there are no triples in this case. Set $m:=n/2$. From~\cite{LPS1,LPS2}, there exist two core-free maximal subgroups $A$ and $B$ of $G$, with $X\leqslant A$ and with $Y\leqslant B$. Moreover, replacing $x$ by $y$ if necessary, from~\cite{LPS1}, we see  that one of the following holds:-
\begin{enumerate}
\item\label{case1} $L=\POmega_{10}^-(2)$, $A\cap L=\Alt(12)$ and $B\cap L=P_1$,
\item\label{case2} $A\cap L=N_1$, $B\cap L= {\hat{}\,{\mathrm{GU}}}_m(q)$ and $m$ is odd,
\item\label{case3} $A\cap L=P_1$, $B\cap L= {\hat{}\,{\mathrm{GU}}}_m(q)$ and $m$ is odd,
\item\label{case4} $A\cap L=N_1$, $B\cap L= \Omega_m^-(q^2).2$, $q\in \{2,4\}$, $m$ is even and $G=\Aut(L)$,
\item\label{case5} $A\cap L=N_2^+$, $B\cap L= \mathrm{GU}_m(4)$, $q=4$, $m$ is odd and $G=\Aut(L)$.
\end{enumerate}

\smallskip 

\noindent\textsc{Case~\eqref{case1}. }

\noindent This case can be dealt with a computer computation and we obtain one of the examples in Line~11 of Table~\ref{table1}.  

\smallskip

\noindent\textsc{Case~\eqref{case5}. }

\noindent The factorization $G=XY$ of $G$ gives rise to the factorization $$B=G\cap B=XY\cap B=(X\cap B)Y$$ of $B$, via Lemma~\ref{l: smaller}. Let us denote by $g\mapsto \bar{g}$ the natural projection from $B$ to $\bar{B}=B/{\bf Z}(B\cap L)=B/\Z {\GU_m(4)}$ and observe that ${\bf Z}(\mathrm{GU}_m(q))$ has order $\gcd(m,q+1)=\gcd(m,5)$. Now, $\bar{B}$ is an almost simple group with socle $\PSU_m(4)$ with $m$ odd. An inspection of~\cite{LPS1} reveals that this group $\bar{B}$ has no maximal factorizations and hence the factorization $B=(X\cap B)Y$ implies $\bar{B}=\bar{Y}$, or $\bar{B}=\overline{X\cap B}$. The second option is absurd because, by hypothesis, $X\cap L\leqslant A\cap L=N_2^+$ and $N_2^+\cap B$ cannot project surjectively to $B$. Therefore $\bar{B}=\bar{Y}$ and hence $y\in {\Z {\mathrm{GU}_m(4)}}$. In particular, ${\Z {\mathrm{GU}_m(4)}}\ne 1$ and hence $5$ divides $m$. Moreover, 
\begin{equation}\label{eq:POmegaMinus}Y=\nor G {\Z {\mathrm{GU}_m(4)}},
\end{equation} $|y|=5$ and $y$ is a semisimple element having no eigenvalue in $\mathbb{F}_{q}$. 

Now,  by \cite[Lemma 4.1.1]{KL}, $\Omega_2^+(4)\times \Omega_{n-2}^-(4)\leqslant A\cap L=N_2^+=\mathrm{O}_2^+(4)\times \mathrm{O}_{n-2}^-(4)$ and $A\cap L \cap \mathrm{O}^+_2(4)=\Omega^+_2(4)$. Hence $A=\nor G{\Omega_2^+(4)}$ and $|\Omega_2^+(4)|=3$. If $\langle x\rangle=\Omega_2^+(4)$, we obtain the examples in Line~12 of Table~\ref{table1}. Suppose then $\langle x\rangle\ne \Omega_2^+(4)$. Let us denote by $g\mapsto \bar{g}$ the natural projection from $A$ to $\bar{A}=A/Z$. Now, $\bar{A}$ is an almost simple group with socle $\POmega_{2m-2}^-(4)$. As usual, from Lemma~\ref{l: smaller}, the factorization $G=XY$ gives rise to a factorization $A=X(A\cap Y)$ of $A$ and hence to the factorization  $\bar{A}=\bar{X}\overline{A\cap Y}$ of the almost simple group $\bar{A}$ having socle $\mathrm{P}\Omega_{2m-2}^-(q)$. As $(2m-2)/2=m-1$ is even, Case~\eqref{case4} holds for the factorization, $\bar{A}=\bar{X}\overline{A\cap Y}$, that is, $\bar{X}$ is contained in a subgroup of type $N_1$ of $\bar{A}$, or of type $\Omega_{m-1}^-(q^2).2=\Omega_{m-1}^-(16).2$. However, the first possibility is impossible, otherwise we would have a factorization of $G$ where one of the two factors (namely $X$) is contained in  the stabilizer of a $3$-dimensional non-degenerate subspace of $V=\mathbb{F}_q^n$, contradicting~\cite{LPS1}. In the second case, we claim that $|X|$ is not divisible by $|G:B|$. Indeed, from~\eqref{eq:POmegaMinus}, we have
$$|G:B|=|\mathrm{P}\Omega_n^-(q):\mathrm{GU}_m(q)|$$
is divisible by $q^{m-2}-1$, because $m$ is odd. However, if $t$ is a primitive prime divisor of $q^{m-2}-1$, then it is readily seen that $t$ is relatively prime to $|\Omega_{m-1}^-(q^2).2|$ and hence to $|X|$. 

\smallskip

\noindent\textsc{Cases~\eqref{case2}, \eqref{case3} and \eqref{case4}. }

\smallskip

\noindent Suppose first that $B\cap L={\hat{}}\GU_m(q))$. From the factorization $G=XB$, we deduce that $|X|$ is divisible by $|G:B|$ and hence by $$|LB:B|=|L:L\cap B|=q^{\frac{m(m-1)}{2}}(q^{m-1}+1)(q^{m-2}-1)\cdots (q^2+1)(q-1).$$
Now, Lemma~\ref{casePo--} yields that $x$ is an involution whose centraliser is $N_1$. Thus $A\cap L= N_1$.  Hence Case \eqref{case3} does not occur and we only need to consider Cases \eqref{case2} and \eqref{case4}. In both cases we have $A\cap L=N_1$. Hence $X\leqslant N_1$ and so $Y$ acts transitively on an $L$-orbit of nonsingular 1-subspaces. Thus by \cite[Lemma 4.4]{LPSregsubs} and Lemma \ref{lem:ruleout} we have that one of the following is a normal subgroup $Y_0$ of $Y$:
\begin{enumerate}
    \item $\SU_m(q)$ and $m$ odd;
    \item $\SU_{m/2}(q^2)$, with $m\equiv 2\pmod 4$, $m\geqslant 6$, and $q=2$ or 4;
    \item $\Omega_m^-(q^2)$ with $m$ even and $q=2$ or 4;
\end{enumerate}
The last possibility for $Y_0$ have trivial centre and is insoluble so must lie in $\cent{G}{y}$. However, inspecting \cite[Section 3.5]{BGbook} we see that this is not possible (note that $Y_0$ is irreducible).
When $Y_0=\SU_m(q)$ we have that $B=Y=\nor{G}{\langle y\rangle}$ for some $y\in \Z{B\cap L}=\Z{\,{\hat{}}\GU_m(q)}$. Thus $y$ is semisimple of order a divisor of $q+1$ and $y$ has no eigenvalue in $\mathbb{F}_{q}$. Moreover, the argument at the start of the paragraph yields that $x$ is an involution whose centraliser is $N_1$ and so we have Line 11.

It remains to consider the case where $Y_0=\SU_{m/2}(q^2)$ when $q=2$ or 4. Then   $Y=\nor{G}{\langle y\rangle}$ for some $y\in \Z{B\cap L}=\Z{\,{\hat{}}\GU_{m/2}(q^2)}$. Thus $y$ is semisimple of order $q^2+1$ and $y$ has no eigenvalue in $\mathbb{F}_{q^2}$. We also have that $B\cap L=\mathrm{O}_m^-(q^2)$ and so for the maximal factorisation to exist we need $G=\Aut(L)$.   Using the argument in \cite[p 59]{LPS1}, let $Q:V\rightarrow \GF(q)$ and $P:V\rightarrow \GF(q^2)$ be nondegenerate quadratic forms of ``minus type'' such that $Q=\mathrm{Tr}_{q^2\rightarrow q}\circ P$. Let $v\in V$ such that $Q(v)=1$. 
As the elements of $\langle v\rangle_{\GF(q^2)}$ have distinct $P$-values we have that $B\cap N_1= \mathrm{O}^-_{2m}(q^2)_v=\mathrm{O}^-_{2m}(q^2)_{\langle v\rangle_{\GF(q)^2}}=\Sp_{m-2}(q^2)\times C_2$. By \cite[Table 1]{LPS1} we have that $\mathrm{O}^-_m(q^2)=( \Sp_{m-2}(q^2)\times C_2)(\GU_{m/2}(q^2).2)$ and so $B=( \Sp_{m-2}(q^2)\times C_2)(\GU_{m/2}(q^2).4f)$, where $f=2$ if $q=4$ and $f=1$ otherwise. Thus $B=(B\cap N_1)Y$ and so by  Lemma \ref{l: smaller} $G=N_1Y$. Note that $N_1$ is the centraliser in $G$ of an involution in $\SO_{2m}^-(q)\backslash\Omega_{2m}^-(q)$  whose centraliser in $L$ is $\Sp_{n-2}(q)$, and so we have the factorisation in Line 13 of Table \ref{table1}. It remains to show that it is not possible to have $X<N_1$. Note that $N_1=\Sp_{n-2}(2)\times C_2$ when $q=2$, while when $q=4$ we have $N_1=(\Sp_{n-2}(4)\rtimes \langle\phi\rangle) \times C_2$ where $\phi$ is a field automorphism. Also if $x=(x_1,x_2)\in N_1$ then $X\leqslant N_{\Sp_{n-2}(4)\rtimes \langle\phi\rangle}(\langle x_1\rangle)\times C_2$. Note that it remains to consider the case where $x_1\neq 1$. Looking at $|X:B|$ we deduce that a primitive prime divisor of $r^{f(n-2)}-1$ divides $|X|$ and so by Lemma \ref{hup0PSp} we deduce that $x_1$ lies in a maximal torus of $\mathrm{PGSp}_{n-2}(q)$ of order $q^{(n-2)/2}+1$. Then looking at the possible orders of  $N_{\Sp_{n-2}(4)\rtimes \langle\phi\rangle}(\langle x_1\rangle)$ we deduce that no factorisation arises.

 \smallskip

In Line 11, we see from \cite[3.5.2(b)]{LPS1} that $ \hat{} \SU_m(q)\leqslant {\bf C}_G(y) $ acts transitively on an $L$-orbit of nonsingular 1-spaces and so we get $G={\bf C}_G(x){\bf C}_G(y)$.  However, for Lines 12 and 13 we see in \cite[3.5.1 and 3.5.2(c)]{LPS1} that ${\bf C}_G(y)$ needs to contain field automorphisms to be transitive on the conjugacy class $\langle x\rangle^G$. Since such elements of ${\bf N}_G(\langle y\rangle)$ do not centralise $y$ it follows that 
 ${\bf C}_G(x){\bf C}_G(y)<G$.  Thus we have the $\surd$ symbol in Line~11, whereas $\surd$ is omitted in Lines~12 and 13.
\end{proof}

\subsection*{Acknowledgements} The second author was supported by the Australian Research Council Discovery Project DP160102323.

\bibliographystyle{alpha}
\bibliography{szeprefs}

\newcommand{\etalchar}[1]{$^{#1}$}
\begin{thebibliography}{CCN{\etalchar{+}}85}

\bibitem[BCP97]{magma}
Wieb {Bosma}, John {Cannon}, and Catherine {Playoust}.
\newblock {The Magma algebra system. I: The user language.}
\newblock {\em {J. Symb. Comput.}}, 24(3-4):235--265, 1997.

\bibitem[BG17]{BGbook}
Timothy~C. {Burness} and Michael {Giudici}.
\newblock {Permutation groups and derangements of odd prime order.}
\newblock {\em {J. Comb. Theory, Ser. A}}, 151:102--130, 2017.

\bibitem[BHR13]{BHRDbook}
John~N. {Bray}, Derek~F. {Holt}, and Colva~M. {Roney-Dougal}.
\newblock {\em {The maximal subgroups of the low-dimensional finite classical
  groups.}}
\newblock Cambridge: Cambridge University Press, 2013.

\bibitem[BP55]{BP}
R.~A. Beaumont and R.~P. Peterson.
\newblock Set-transitive permutation groups.
\newblock {\em Canad. J. Math.}, 7:35--42, 1955.

\bibitem[CCN{\etalchar{+}}85]{ATLAS}
J.H. {Conway}, R.T. {Curtis}, S.P. {Norton}, R.A. {Parker}, and R.A. {Wilson}.
\newblock {Atlas of finite groups. Maximal subgroups and ordinary characters
  for simple groups. With comput. assist. from J. G. Thackray.}
\newblock {Oxford: Clarendon Press.}, 1985.

\bibitem[DGPS13]{nostro}
Silvio {Dolfi}, Robert {Guralnick}, Cheryl~E. {Praeger}, and Pablo {Spiga}.
\newblock {Coprime subdegrees for primitive permutation groups and completely
  reducible linear groups.}
\newblock {\em {Isr. J. Math.}}, 195:745--772, 2013.

\bibitem[GGP]{GGP}
Michael Giudici, Stephen Glasby, and Cheryl~E. Praeger.
\newblock Subgroups of classical groups that are transitive on subspaces.
\newblock submitted.

\bibitem[{Giu}06]{mgfact}
Michael {Giudici}.
\newblock {Factorisations of sporadic simple groups.}
\newblock {\em {J. Algebra}}, 304(1):311--323, 2006.

\bibitem[GLS98]{GLS}
Daniel {Gorenstein}, Richard {Lyons}, and Ronald {Solomon}.
\newblock {\em {The classification of the finite simple groups. Part I, Chapter
  A: Almost simple $K$-groups.}}
\newblock Providence, RI: American Mathematical Society, 1998.

\bibitem[GMPS15]{Guest}
Simon Guest, Joy Morris, Cheryl~E. Praeger, and Pablo Spiga.
\newblock On the maximum orders of elements of finite almost simple groups and
  primitive permutation groups.
\newblock {\em Trans. Amer. Math. Soc.}, 367(11):7665--7694, 2015.

\bibitem[GMT13]{GMT}
Robert~M. {Guralnick}, Gunter {Malle}, and Pham~Huu {Tiep}.
\newblock {Products of conjugacy classes in finite and algebraic simple
  groups.}
\newblock {\em {Adv. Math.}}, 234:618--652, 2013.

\bibitem[KL90]{KL}
Peter {Kleidman} and Martin {Liebeck}.
\newblock {\em {The subgroup structure of the finite classical groups.}}
\newblock Cambridge etc.: Cambridge University Press, 1990.

\bibitem[{Kle}87]{Kleidman2}
Peter~B. {Kleidman}.
\newblock {The maximal subgroups of the finite 8-dimensional orthogonal groups
  $P\Omega \sp +\sb 8(q)$ and of their automorphism groups.}
\newblock {\em {J. Algebra}}, 110:173--242, 1987.

\bibitem[LPS87]{LPSmaxsubs}
Martin~W. Liebeck, Cheryl~E. Praeger, and Jan Saxl.
\newblock A classification of the maximal subgroups of the finite alternating
  and symmetric groups.
\newblock {\em J. Algebra}, 111(2):365--383, 1987.

\bibitem[LPS90]{LPS1}
M.W. {Liebeck}, C.E. {Praeger}, and J.~{Saxl}.
\newblock {The maximal factorizations of the finite simple groups and their
  automorphism groups.}
\newblock {\em {Mem. Am. Math. Soc.}}, 432:151, 1990.

\bibitem[LPS96]{LPS2}
Martin~W. {Liebeck}, Cheryl~E. {Praeger}, and Jan {Saxl}.
\newblock {On factorizations of almost simple groups.}
\newblock {\em {J. Algebra}}, 185(2):409--419, 1996.

\bibitem[LPS10]{LPSregsubs}
Martin~W. Liebeck, Cheryl~E. Praeger, and Jan Saxl.
\newblock Regular subgroups of primitive permutation groups.
\newblock {\em Mem. Amer. Math. Soc.}, 203(952):vi+74, 2010.

\bibitem[LX]{LX}
C.H. Li and B.~Xia.
\newblock Factorizations of almost simple groups with a solvable factor, and
  cayley graphs of solvable groups.
\newblock {\em Mem. Amer. Math. Soc.}
\newblock to appear.

\bibitem[{Sze}68]{FA}
Jen\"o {Szep}.
\newblock {Sui gruppi fattorizzabili.}
\newblock {\em {Rend. Semin. Mat. Fis. Milano}}, 38:228--230, 1968.

\bibitem[Zho21]{Zhou}
Jin-Xin Zhou.
\newblock A solution of li-xia’s problem on $s$-arc-transitive solvable
  cayley graphs.
\newblock {\em Journal of Combinatorial Theory, Series B}, 149:147--160, 2021).

\bibitem[{Zsi}92]{Zs}
K.~{Zsigmondy}.
\newblock {Zur Theorie der Potenzreste.}
\newblock {\em {Monatsh. Math. Phys.}}, 3:265--284, 1892.

\end{thebibliography}




\end{document}